\documentclass[12pt]{amsart}
\usepackage{amsfonts,amssymb,amsmath,amscd,amsthm, amstext,bm,color}
\usepackage[colorlinks, citecolor=blue,pagebackref,hypertexnames=false]{hyperref}
\usepackage{enumerate}
\usepackage{graphicx}
\usepackage{epstopdf}
\usepackage{fancyhdr}
\usepackage{geometry}
\usepackage{mathrsfs}
\usepackage{txfonts}
\usepackage{tikz}
\usetikzlibrary{arrows.meta, positioning, decorations.pathreplacing}
\usepackage{url}

\usepackage{comment}
\allowdisplaybreaks
\numberwithin{equation}{section}

\newtheorem{theorem}{Theorem}[section]
\newtheorem{lemma}[theorem]{Lemma}
\newtheorem{proposition}[theorem]{Proposition}
\newtheorem{corollary}[theorem]{Corollary}
\newtheorem{definition}[theorem]{Definition}

\newcommand{\Rp}{\mathbb R_+}

\newcommand{\BMO}{\operatorname{BMO}}

\newcommand{\one}{\mathbf 1}
\newcommand{\eps}{\varepsilon}

\newcommand{\pvs}{\operatorname{p.v.}}

\title[Sharp weighted estimates for Bessel Riesz transforms]{Sharp weighted estimates for the Bessel Riesz transform
and its commutator with Andersen--Kerman weights}

\author[C. Wen]{Chaojie Wen}
\address{ Chaojie Wen,
Department of Mathematics, Sun Yat-sen University, Guangzhou, 510275, P.R.~China.}
\email{wenchj@mail2.sysu.edu.cn}

\date{}
\subjclass[2020]{Primary 42B20; Secondary 42B25, 42B35, 42C10, 47B47.}
\keywords{Bessel Riesz transform; Andersen--Kerman weights; sharp weighted estimates;
sparse domination; commutators; bounded mean oscillation; Calder\'on--Zygmund operators.}

\begin{document}

\begin{abstract}
Let $\lambda>-1/2$, $\lambda\neq0$, and let
$
        \Delta_\lambda=-\frac{d^2}{dx^2}-\frac{2\lambda}{x}\frac{d}{dx}
$
be the Bessel operator on $\Rp=(0,\infty)$ studied by Muckenhoupt and Stein \cite{MS65}. Andersen and Kerman \cite{AK81} proved that for $1<p<\infty$, the Bessel Riesz transform $R_\lambda=\frac{d}{dx}\Delta_\lambda^{-1/2}$ is
bounded on $L^p(\Rp,w(x)\,dx)$ if and only if $w$ is in the intrinsic class $A_{p,\lambda}$. However, the sharp quantitative weighted bound via $[w]_{A_{p,\lambda}}$ was not addressed before.

In this paper, we give a positive answer to this question by proving the sharp quantitative estimate
$$
        \|R_\lambda f\|_{L^p(\Rp,w\,dx)} \le C_{p,\lambda} [w]_{A_{p,\lambda}}^{\max\{1,1/(p-1)\}} \|f\|_{L^p(\Rp,w\,dx)}.
$$
Moreover, for a real-valued function $b$ in the Bessel BMO space $\operatorname{BMO}_\lambda$, the sharp weighted bound for the Riesz commutator is
$$
        \|[b,R_\lambda]f\|_{L^p(\Rp,w\,dx)} \le C_{p,\lambda}\|b\|_{\operatorname{BMO}_\lambda}
        [w]_{A_{p,\lambda}}^{2\cdot\max\{1,1/(p-1)\}} \|f\|_{L^p(\Rp,w\,dx)}.
$$
The argument uses the exact conjugation
$
        U(x)=x^{p-2\lambda-1}w(x), \ d\nu_\lambda=x^{2\lambda+1}\,dx, \ [U]_{A_p(d\nu_\lambda)}=[w]_{A_{p,\lambda}},
$
which reduces the Andersen--Kerman estimate to an $A_p$ weighted estimate for the auxiliary operator
$$
        \mathcal R_\lambda F(x)=\frac{1}{x}R_\lambda(yF(y))(x)
$$
on the space of homogeneous type $(\Rp,|x-y|,d\nu_\lambda)$, whose kernel is a standard Calder\'on--Zygmund
kernel with respect to $\nu_\lambda$. 
\end{abstract}

\maketitle

\section{Introduction}

The Bessel operator
$$
        \Delta_\lambda =-x^{-2\lambda}\frac{d}{dx}x^{2\lambda}\frac{d}{dx} =-\frac{d^2}{dx^2}-\frac{2\lambda}{x}\frac{d}{dx}, \qquad x>0,\ \  \lambda\in(-1/2,\infty)\setminus\{0\},
$$
is a fundamental object in harmonic analysis and complex analysis. Muckenhoupt and Stein \cite{MS65} introduced and developed the associated Poisson
kernels, conjugate functions, fractional integrals, and the Bessel Riesz transforms $$
        R_\lambda=\frac{d}{dx}\Delta_\lambda^{-1/2}.
$$ 
When $2\lambda$ is an integer, $\Delta_\lambda$ coincides with the radial part of the Laplacian in dimension $2\lambda+1$. Consequently, the singular behavior at the endpoint 0 is intrinsic to the operator and does not arise from a particular choice of normalization.

On the Bessel space of homogeneous type $(\Rp,|x-y|,dm_\lambda)$, where $dm_\lambda(x)=x^{2\lambda}\,dx$, the usual
Muckenhoupt classes $A_p(dm_\lambda)$ are the natural classes for the Hardy--Littlewood maximal operator by
Muckenhoupt's theorem \cite{Muckenhoupt1972trans}, and they are the weighted classes used in the standard Bessel
normalization for Calder\'on--Zygmund estimates, see for instance the works of Betancor et al \cite{bfbmt}, and Villani \cite{v08}. Andersen and Kerman \cite{AK81}
considered a different normalization: the same transform acting on the weighted Lebesgue space of the form $L^p(\Rp,w\,dx)$, but not $L^p(\Rp,w\,dm_\lambda)$. The corresponding weights are not the
usual Bessel $A_p$ weights $A_p(dm_\lambda)$. In their normalization, the weight class $A_{p,\lambda}$ is characterized by
\begin{align}\label{eq:AK-characteristic}
        [w]_{A_{p,\lambda}} := \sup_{I\subset\Rp} \left(\frac{1}{\nu_\lambda(I)}\int_I x^p w(x)\,dx\right)
        \left(\frac{1}{\nu_\lambda(I)} \int_I x^{2\lambda p'} w(x)^{-1/(p-1)}\,dx\right)^{p-1}<\infty,
\end{align}
where $d\nu_\lambda(x)=x^{2\lambda+1}\,dx$ and $p'$ is the conjugate exponent of $p$. 

Throughout the paper, $A_{p,\lambda}$ is understood in this sense. In particular, no separate local-finiteness assumption is imposed on
$w\,dx$ at $0$. The defining integrals in \eqref{eq:AK-characteristic} are the relevant finiteness conditions. Thus $A_{p,\lambda}$ 
may contain weights such as $x^{-1-\varepsilon}$. Andersen and Kerman \cite{AK81} proved that, for $1<p<\infty$,
$$
        R_\lambda:L^p(\Rp,w\,dx)\longrightarrow L^p(\Rp,w\,dx)
$$
is bounded if and only if $w\in A_{p,\lambda}$.

We also point out that the difference between  $A_{p,\lambda}$ and  $A_p(dm_\lambda)$  is already visible for power weights. From \eqref{eq:AK-characteristic}, we obtain that
\begin{align}\label{eq:power-AK-intro}
        x^\alpha\in A_{p,\lambda} \quad\Longleftrightarrow\quad -1-p<\alpha<2\lambda p+p-1.
\end{align}
By contrast, converting the usual $A_p(dm_\lambda)$ theorem to the Lebesgue norm requires
$w(x)x^{-2\lambda}\in A_p(dm_\lambda)$ with $w(x)=x^\alpha$ gives
\begin{align}\label{eq:power-usual-converted-intro}
        -1<\alpha<2\lambda+(2\lambda+1)(p-1) =2\lambda p+p-1.
\end{align}
The upper endpoint is the same, while the lower endpoint is not. Hence $A_{p,\lambda}$ contains weights with stronger singularity at the
origin than those obtained from the standard Bessel $A_p$ theorem after the change of measure. We refer to Section \ref{sec:not-usual} for more details on the comparison.

Several weighted and endpoint aspects of Bessel analysis have since been studied, see for example,
\cite{AK81,bcfr,bdt,bfbmt,bfs,bhnv,DLWY,DLMWY,LLLS,LLSW,v08} and the references therein. Some previous closely related results
are the classes $\widetilde A_{p,\lambda}$ introduced by Li--Liang--Lin--Shen \cite{LLLS} and the quantitative
estimates of Li--Liang--Shen--Wick \cite{LLSW}. Their Riesz transform weighted bounds are stated with the shifted class
$\widetilde A_{p,\lambda-\frac12}$. When $-1/2<\lambda<0$, this notation is used below only as a power-measure
characteristic. The comparison involves the measure $x^{2\lambda}\,dx$ and does not require a Bessel-operator
interpretation of the parameter $\lambda-\frac12$. When the weighted norm is written with respect to Lebesgue measure,
$$
        w\in \widetilde A_{p,\lambda-\frac12} \quad\Longleftrightarrow\quad w(x)x^{-2\lambda}\in A_p(dm_\lambda).
$$
Thus $\widetilde A_{p,\lambda-\frac12}$ is the class arising from the usual Bessel $A_p(dm_\lambda)$ theorem after rewriting the norm as
$L^p(\Rp,w\,dx)$, and it is properly contained in $A_{p,\lambda}$. For power weights,
$$
    \begin{array}{rcl}
    x^\alpha\in A_{p,\lambda} &\Longleftrightarrow& -1-p<\alpha<2\lambda p+p-1,\\
    x^\alpha\in \widetilde A_{p,\lambda-\frac12} &\Longleftrightarrow& -1<\alpha<2\lambda p+p-1.
    \end{array}
$$
The unshifted class $\widetilde A_{p,\lambda}$ is not comparable with $A_{p,\lambda}$, since
$$
        x^\alpha\in \widetilde A_{p,\lambda} \quad\Longleftrightarrow\quad -1<\alpha<2\lambda p+2p-1.
$$

The present paper concerns the quantitative dependence on the Andersen--Kerman weights $ A_{p,\lambda}$. Andersen and Kerman identified
the suitable class $A_{p,\lambda}$ for characterizing the weighted $L^p$ boundedness of the Bessel Riesz transform, however, their result does not contain the sharp power of $[w]_{A_{p,\lambda}}$. 

We give a positive answer to this question as follows.
\begin{theorem}\label{thm:intro-riesz}
Let $\lambda>-1/2$, $\lambda\neq0$, and $1<p<\infty$. If $w\in A_{p,\lambda}$, then
\begin{align}\label{eq:intro-sharp-riesz}
        \|R_\lambda f\|_{L^p(\Rp,w\,dx)} \le C_{p,\lambda} [w]_{A_{p,\lambda}}^{\gamma_p}
        \|f\|_{L^p(\Rp,w\,dx)}, \qquad \gamma_p=\max\left\{1,\frac{1}{p-1}\right\}.
\end{align}
In particular,
$$
        \|R_\lambda f\|_{L^2(\Rp,w\,dx)} \le C_\lambda [w]_{A_{2,\lambda}}\|f\|_{L^2(\Rp,w\,dx)}.
$$
The exponent $\gamma_p$ cannot be replaced by a smaller one, see Proposition~\ref{prop:sharpness-powers}.
\end{theorem}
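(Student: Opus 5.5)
The plan follows the conjugation announced in the abstract. Given $f$ and $w$, I write $f(y)=yF(y)$ and set $U(x)=x^{p-2\lambda-1}w(x)$. Then
$$
\int_0^\infty |R_\lambda f|^p w\,dx=\int_0^\infty |x\,\mathcal R_\lambda F(x)|^p w(x)\,dx=\int_0^\infty |\mathcal R_\lambda F|^p U\,d\nu_\lambda ,
$$
and in the same way $\|f\|_{L^p(w\,dx)}=\|F\|_{L^p(U\,d\nu_\lambda)}$. So the theorem is equivalent to
$$
\|\mathcal R_\lambda F\|_{L^p(U d\nu_\lambda)}\le C[U]_{A_p(d\nu_\lambda)}^{\gamma_p}\|F\|_{L^p(U d\nu_\lambda)}.
$$

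Next I check that $[U]_{A_p(d\nu_\lambda)}=[w]_{A_{p,\lambda}}$. For the first factor, $U\,d\nu_\lambda=x^{p}w\,dx$. For the second, $U^{-1/(p-1)}d\nu_\lambda=x^{-(p-2\lambda-1)/(p-1)+2\lambda+1}w^{-1/(p-1)}dx$, and the exponent equals
$$
\frac{-p+2\lambda+1+(2\lambda+1)(p-1)}{p-1}=\frac{2\lambda p-p+p}{p-1}\cdot\ldots=2\lambda p'.
$$
Concretely, the numerator is $2\lambda p$, and $2\lambda p/(p-1)=2\lambda p'$. This matches \eqref{eq:AK-characteristic} exactly.

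It then remains to show that $\mathcal R_\lambda$ is a Calderón--Zygmund operator on the space of homogeneous type $(\Rp,|x-y|,\nu_\lambda)$, so that the sharp $A_2$ theorem there applies. That theorem is available through sparse domination (Lerner, Hytönen, Lacey) on doubling metric measure spaces. It gives $\|T\|_{L^p(U)}\lesssim [U]_{A_p}^{\gamma_p}$ for any $L^2(\nu_\lambda)$-bounded operator whose kernel is a standard kernel relative to $\nu_\lambda$, because $T$ is then dominated pointwise by finitely many sparse operators $\sum_{Q\in\mathcal S}\langle |F|\rangle_Q^{\nu_\lambda}\one_Q$ over adjacent dyadic systems.

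The work therefore splits into two checks.

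\textbf{(a) Unweighted boundedness.} $\mathcal R_\lambda$ is bounded on $L^2(\nu_\lambda)$. This is the $p=2$ unweighted Andersen--Kerman statement, since $w\equiv x^{2\lambda-1}$ corresponds to $U\equiv1$ and one checks $x^{2\lambda-1}\in A_{2,\lambda}$. Alternatively, it follows from the kernel estimates below via a $T1$-type argument.

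\textbf{(b) Kernel estimates.} The kernel of $\mathcal R_\lambda$ with respect to $\nu_\lambda$ is
$$
K(x,y)=\frac{1}{x}R_\lambda(x,y)\,y\,y^{-2\lambda-1}=\frac{R_\lambda(x,y)}{x\,y^{2\lambda}},
$$
where $R_\lambda(x,y)$ is the Lebesgue kernel of $R_\lambda$. Using the Muckenhoupt--Stein/Betancor et al.\ description of $R_\lambda(x,y)$, which is written against $dm_\lambda$ with $R_\lambda(x,y)\,dy=\mathcal K(x,y)y^{2\lambda}dy$, we get $K(x,y)=\mathcal K(x,y)/x$. I then verify
$$
|K(x,y)|\lesssim \frac{1}{\nu_\lambda(I(x,|x-y|))}\sim\frac{1}{|x-y|\max(x,y)^{2\lambda+1}},
$$
together with the usual Hölder smoothness, in three regimes: $y<x/2$, $y>2x$, and $x/2\le y\le 2x$.

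\textbf{Main obstacle.} I expect the main difficulty to be the off-diagonal regimes, because of the extra factor $1/x$ together with the signed parameter $\lambda$, which may be negative. The known decay of the Bessel Riesz kernel is $|\mathcal K(x,y)|\lesssim x/y^{2\lambda+2}$ for $y>2x$, and $\lesssim y/x^{2\lambda+2}$ after the symmetric correction for $y<x/2$. Dividing by $x$ should give $y^{-2\lambda-2}$ and $y/x^{2\lambda+3}\le x^{-2\lambda-2}$ respectively, matching the standard size bound. Both regimes must be checked carefully from the integral representation of $\mathcal K$, and the derivative bounds handled in the same way. Near the diagonal the kernel behaves like $\frac{1}{\pi}\frac{1}{x^{2\lambda+1}(x-y)}$, which is standard.

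The sharpness of $\gamma_p$ is deferred to Proposition~\ref{prop:sharpness-powers}, via power weights.
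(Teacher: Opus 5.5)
Your outline follows the paper's route: the conjugation $U=x^{p-2\lambda-1}w$, $f=xF$, the exact identity $[U]_{A_p(d\nu_\lambda)}=[w]_{A_{p,\lambda}}$, Calder\'on--Zygmund estimates for the conjugated kernel on $(\Rp,|x-y|,\nu_\lambda)$, $L^2(\nu_\lambda)$-boundedness from Andersen--Kerman with $w_0=x^{2\lambda-1}$, and then sparse domination. Using pointwise sparse operators instead of the paper's bilinear sparse form with K.~Li's mixed $A_p$--$A_\infty$ bound is an inessential difference. Your ``$T1$'' alternative in (a) would need testing conditions, not only kernel bounds. The gap is in (b), exactly at the step you call the main obstacle. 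Dividing the known bounds for the $dm_\lambda$-kernel $\mathcal K$ by $x$ gives the size estimate but not the smoothness estimate, and your sketch does not supply what replaces it.

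First, a correction. For $y<x/2$ it is not true that $|\mathcal K(x,y)|\lesssim y/x^{2\lambda+2}$. Since $\lambda\ne0$, $\mathcal K(x,y)$ tends to a nonzero multiple of $x^{-2\lambda-1}$ as $y\to0$; for $\lambda>0$, putting $y=0$ in the angular formula gives $-\frac{2\lambda}{\pi}x^{-2\lambda-1}\int_0^\pi(\sin\theta)^{2\lambda-1}\,d\theta$. This error is harmless. In that region $x^{-2\lambda-2}\approx\nu_\lambda(B(x,|x-y|))^{-1}$ and $x\approx|x-y|$, so the factor $1/x$ costs nothing for size or derivatives.

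The real problem is the region $y\ge2x$. There the standard Bessel bounds are $|\mathcal K|\lesssim xy^{-2\lambda-2}$ and $|\partial_x\mathcal K|+|\partial_y\mathcal K|\lesssim (|x-y|\,m_\lambda(B(x,|x-y|)))^{-1}\approx y^{-2\lambda-2}$. For $K=\mathcal K/x$ they only give
\[
|\partial_xK|+|\partial_yK|\le\frac{|\partial_x\mathcal K|+|\partial_y\mathcal K|}{x}+\frac{|\mathcal K|}{x^2}\lesssim\frac{1}{x\,y^{2\lambda+2}},
\]
which exceeds the required $y^{-2\lambda-3}$ by the unbounded factor $y/x$. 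Interpolating with the size bound does not recover the H\"older condition for increments (in $x$ or in $y$) lying between $x$ and $y$.

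What is needed is the cancellation in $x\partial_x\mathcal K-\mathcal K$, i.e.\ the odd-in-$x$ structure
\[
\mathcal K(x,y)=y^{-2\lambda-1}(x/y)\,a_\lambda(x^2/y^2),\qquad a_\lambda\in C^\infty([0,1/4]).
\]
Then $K(x,y)=y^{-2\lambda-2}a_\lambda(x^2/y^2)$, which gives $|\partial_xK|+|\partial_yK|\lesssim y^{-2\lambda-3}$. The paper extracts $a_\lambda$ from the hypergeometric representation of Betancor--Harboure--Nowak--Viviani, which is valid for all $\lambda>-1/2$. The angular integral you would presumably differentiate contains $(\sin\theta)^{2\lambda-1}$, which is not integrable for $-1/2<\lambda<0$, so it does not cover that range.
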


We recall the Bessel Riesz commutator studied by \cite{DLWY}:
$
        [b,R_\lambda]f(x):=b(x)R_\lambda f(x)-R_\lambda(bf)(x),
$
where the symbol $b$ is in the usual Bessel BMO space $\BMO(dm_\lambda)$. 

We now consider symbols $b$ in the BMO space $\BMO_\lambda$, with the norm taken with respect to the measure $\nu_\lambda$:
$$
        \|b\|_{\BMO_\lambda} :=\sup_{I\subset\Rp}\frac{1}{\nu_\lambda(I)} \int_I
        |b-b_{I}^{\nu_\lambda}|\,d\nu_\lambda, \qquad b_{I}^{\nu_\lambda}:=\frac{1}{\nu_\lambda(I)}\int_I b\,d\nu_\lambda.
$$
Our Proposition~\ref{prop:BMO-power-equivalence} gives
$$
        \BMO_\lambda=\BMO(dm_\lambda)
$$
with equivalent norms. Thus the BMO appearing in the next theorem coincides with the usual Bessel BMO space as used in \cite{DLWY}, expressed in the
normalization adapted to the Andersen--Kerman conjugation. 
\begin{theorem}\label{thm:intro-comm}
Let $\lambda>-1/2$, $\lambda\neq0$, $1<p<\infty$, $w\in A_{p,\lambda}$, and let $b\in \BMO_\lambda$ be real-valued. Then
\begin{align}\label{eq:intro-sharp-comm}
        \|[b,R_\lambda]f\|_{L^p(\Rp,w\,dx)} \le C_{p,\lambda}\|b\|_{\BMO_\lambda} [w]_{A_{p,\lambda}}^{2\gamma_p} \|f\|_{L^p(\Rp,w\,dx)}.
\end{align}
In particular,
$$
        \|[b,R_\lambda]f\|_{L^2(\Rp,w\,dx)} \le C_\lambda\|b\|_{\BMO_\lambda} [w]_{A_{2,\lambda}}^2\|f\|_{L^2(\Rp,w\,dx)}.
$$
The exponent $2\gamma_p$ cannot be replaced by a smaller one, see Proposition~\ref{prop:sharpness-powers}.
\end{theorem}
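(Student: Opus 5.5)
The proof runs through the conjugation described in the abstract. Set $F(y)=f(y)/y$, $U(x)=x^{p-2\lambda-1}w(x)$ and $d\nu_\lambda=x^{2\lambda+1}dx$. Then
$$\int_{\Rp}|f|^p w\,dx=\int_{\Rp}|F|^p x^p w\,dx=\int_{\Rp}|F|^pU\,d\nu_\lambda .$$
The same identity gives $\|R_\lambda f\|_{L^p(w)}=\|\mathcal R_\lambda F\|_{L^p(U\,d\nu_\lambda)}$ and $[b,R_\lambda]f=x\,[b,\mathcal R_\lambda]F$. Expanding the $A_p(d\nu_\lambda)$ characteristic of $U$ gives exactly \eqref{eq:AK-characteristic}:
\begin{itemize}
\item the first average is $\int_I U\,d\nu_\lambda=\int_I x^p w\,dx$;
\item the dual weight is $U^{1-p'}x^{2\lambda+1}=x^{(p-2\lambda-1)(1-p')+2\lambda+1}w^{1-p'}=x^{2\lambda p'}w^{-1/(p-1)}$.
\end{itemize}
Hence $[U]_{A_p(d\nu_\lambda)}=[w]_{A_{p,\lambda}}$. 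Theorem~\ref{thm:intro-comm} therefore reduces to the bound
$$\|[b,\mathcal R_\lambda]\|_{L^p(U d\nu_\lambda)}\lesssim \|b\|_{\BMO(d\nu_\lambda)}[U]_{A_p(d\nu_\lambda)}^{2\gamma_p},$$
and $\BMO(d\nu_\lambda)$ is exactly $\BMO_\lambda$.

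For this reduced bound I would use the sparse domination machinery on spaces of homogeneous type. The measure $\nu_\lambda$ is doubling on $(\Rp,|x-y|)$ because $2\lambda+1>0$. The kernel of $\mathcal R_\lambda$ is $K(x,y)=R_\lambda(x,y)\,y/x$ with respect to $dy$, which is $R_\lambda(x,y)y^{-2\lambda}x^{-1}$ with respect to $d\nu_\lambda$. I would take as given, from the paper's kernel analysis, that this is a standard Calder\'on--Zygmund kernel relative to $\nu_\lambda$ and that $\mathcal R_\lambda$ is bounded on $L^2(d\nu_\lambda)$; the latter is Theorem~\ref{thm:intro-riesz}'s setting at $w=x^{2\lambda-1}$, $p=2$, or follows from Andersen--Kerman.

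With a Dini-type CZ kernel on a space of homogeneous type, the commutator sparse domination of Lerner--Ombrosi--Rivera-Ríos, in its homogeneous-type extension by Duong--Li--Ou--Pipher--Wick and others, applies to compactly supported bounded $F$. It yields finitely many adjacent dyadic systems $\mathcal D^t$ and sparse families $\mathcal S_t$ such that
$$|[b,\mathcal R_\lambda]F(x)|\lesssim \sum_t\sum_{Q\in\mathcal S_t}\Big(|b(x)-b_Q|\langle |F|\rangle_Q+\langle |b-b_Q||F|\rangle_Q\Big)\one_Q(x),$$
with all averages taken with respect to $\nu_\lambda$. The two sparse operators $\mathcal T_{\mathcal S,b}$ and $\mathcal T^*_{\mathcal S,b}$ are bounded on $L^p(U)$ with constant $\|b\|_{\BMO}[U]_{A_p}^{2\gamma_p}$. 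The proof is duality combined with the sparse-family BMO estimate
$$\sum_{Q\in\mathcal S}\langle|b-b_Q|g\rangle_Q\,\nu(Q)\lesssim\|b\|_{\BMO}\sum_{Q\in\tilde{\mathcal S}}\langle g\rangle_Q\,\nu(Q),$$
after which the sharp $A_2$-type bounds for sparse forms on $L^p(U)\times L^{p'}(\sigma)$ are used twice. None of these arguments uses more than doubling of the measure.

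Finally, density extends the bound from bounded compactly supported $F$ to all of $L^p(U\,d\nu_\lambda)$, and the result follows after undoing the conjugation. The main obstacle is the kernel verification: the size bound $|K(x,y)|\lesssim 1/\nu_\lambda(B(x,|x-y|))$ and the smoothness bounds, both uniform near the origin and in the off-diagonal regimes $y\ll x$ and $y\gg x$. The factor $y/x$ is what repairs the bad decay of $R_\lambda(x,y)$ as $y\to0$. Since I am assuming the paper establishes this, the remaining risk is only the $L^2(d\nu_\lambda)$ boundedness and checking that $b$ is locally integrable, which holds because $b\in\BMO_\lambda$.
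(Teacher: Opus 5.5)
Your reduction coincides with the paper's: the same conjugation $U=x^{p-2\lambda-1}w$, $F=f/x$, the exact identity $[U]_{A_p(d\nu_\lambda)}=[w]_{A_{p,\lambda}}$, the relation $[b,R_\lambda]f=x\,[b,\mathcal R_\lambda]F$, and the Calder\'on--Zygmund and $L^2(d\nu_\lambda)$ input of Section~3 (via $w_0=x^{2\lambda-1}$). The routes split at the commutator step. The paper does not sparsely dominate $[b,\mathcal R_\lambda]$. It uses the Coifman--Rochberg--Weiss Cauchy integral $[b,\mathcal R_\lambda]F=\frac{1}{2\pi i}\oint_{|z|=r}e^{zb}\mathcal R_\lambda(e^{-zb}F)\,z^{-2}\,dz$. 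Lemma~\ref{lem:exp-perturb} (reverse H\"older with exponent $1+\eta$, $\eta\approx[U]_{A_p(d\nu_\lambda)}^{-\gamma_p}$, plus John--Nirenberg) gives $[e^{p\operatorname{Re}z\,b}U]_{A_p(d\nu_\lambda)}\lesssim[U]_{A_p(d\nu_\lambda)}$ for $|z|\le r$, where $r\approx(\|b\|_{\BMO_\lambda}[U]_{A_p(d\nu_\lambda)}^{\gamma_p})^{-1}$. The paper then applies Theorem~\ref{thm:intro-riesz} to the perturbed weights. One factor $[U]^{\gamma_p}$ comes from that operator bound and the other from $1/r$; unbounded $b$ is reached by truncation and weak compactness. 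That route treats the sharp bound for $\mathcal R_\lambda$ as a black box and needs no sparse input beyond the bilinear form already used for Theorem~\ref{thm:intro-riesz}. It also gives the iterated commutators of Corollary~\ref{cor:iterated} at once. Your route needs an additional theorem, namely pointwise sparse domination of commutators on $(\Rp,|x-y|,\nu_\lambda)$. In exchange it applies directly to $b\in L^1_{\rm loc}$, avoids the quantitative reverse H\"older inequality and the truncation, and its pointwise form gives finer consequences such as mixed $A_p$--$A_\infty$ or Bloom-type two-weight bounds.

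One step needs repair. Your displayed ``sparse-family BMO estimate'' has dropped the averages $\langle|F|\rangle_Q$. As written, its right-hand side is a one-function sum, not a bilinear sparse form. It also discards the multiplicity of the intervals of $\mathcal S$ lying above a given interval, which is exactly what produces the second power of $[U]$. The correct input is the Lerner--Ombrosi--Rivera-R\'ios lemma: there is a sparse $\tilde{\mathcal S}\supseteq\mathcal S$ such that $|b-b_Q|\one_Q\lesssim\|b\|_{\BMO_\lambda}\sum_{P\in\tilde{\mathcal S},\,P\subseteq Q}\one_P$ a.e.\ for every $Q\in\mathcal S$. Let $A_{\tilde{\mathcal S}}h=\sum_{P\in\tilde{\mathcal S}}\langle h\rangle_P\one_P$. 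Since $\sum_{Q\in\mathcal S,\,Q\supseteq P}\langle|F|\rangle_Q\le\langle A_{\tilde{\mathcal S}}|F|\rangle_P$, the lemma gives
\[
\sum_{Q\in\mathcal S}\langle|F|\rangle_Q\langle|b-b_Q||G|\rangle_Q\,\nu_\lambda(Q)\lesssim\|b\|_{\BMO_\lambda}\sum_{P\in\tilde{\mathcal S}}\langle A_{\tilde{\mathcal S}}|F|\rangle_P\langle|G|\rangle_P\,\nu_\lambda(P).
\]
Apply Theorem~\ref{thm:weighted-sparse} to this form, and then, by duality, to $\|A_{\tilde{\mathcal S}}|F|\|_{L^p(\Rp,U\,d\nu_\lambda)}$; this gives $\|b\|_{\BMO_\lambda}[U]_{A_p(d\nu_\lambda)}^{2\gamma_p}$. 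The term $\mathcal T^*_{\mathcal S,b}$ is handled symmetrically with $F$ and $G$ exchanged. With this correction your argument is complete.
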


For complex-valued symbols the estimate follows by applying the real-valued case to real and imaginary parts.

The reduction starts from an exact conjugation. Given $w$, set
$
        U(x)=x^{p-2\lambda-1}w(x), \ d\nu_\lambda(x)=x^{2\lambda+1}\,dx.
$
Then
\begin{align}\label{eq:intro-exact-weight}
        [U]_{A_p(d\nu_\lambda)}=[w]_{A_{p,\lambda}}.
\end{align}
If we define $F(x)=\frac{1}{x}f(x)$ and
$$
        \mathcal R_\lambda F(x)=\frac{1}{x}R_\lambda(f)(x),
$$
then
$$
        \|f\|_{L^p(\Rp,w\,dx)}= \|F\|_{L^p(\Rp,U\,d\nu_\lambda)}, \qquad \|R_\lambda f\|_{L^p(\Rp,w\,dx)}= \|\mathcal R_\lambda
        F\|_{L^p(\Rp,U\,d\nu_\lambda)},
$$
and the same identity holds for commutators. The Andersen--Kerman problem is thereby converted into an ordinary
weighted problem for $\mathcal R_\lambda$ on $(\Rp,|x-y|,d\nu_\lambda)$.

With this normalization the Andersen--Kerman kernel estimates become Calder\'on--Zygmund estimates for $\mathcal
R_\lambda$ on $(\Rp,|x-y|,d\nu_\lambda)$. Sparse domination and sharp weighted sparse estimates give
Theorem~\ref{thm:intro-riesz}. The commutator bound follows from applying the Cauchy integral method of Coifman--Rochberg--Weiss after
conjugation. Sharpness is local: on compact subintervals of $\Rp$, the measure $\nu_\lambda$ is comparable with Lebesgue
measure and the Andersen--Kerman kernel has the one-sided sign and size of the Hilbert kernel. The one-sided
Hardy--Hilbert lower examples therefore transfer to the Bessel setting without involving the endpoint $0$.

The remainder is arranged as follows. Section~\ref{sec:geometry} recalls the geometry, the Andersen--Kerman weights, the BMO
normalization, and the exact conjugation. Section~\ref{sec:kernel} converts the Andersen--Kerman kernel estimates into
Calder\'on--Zygmund estimates for $\mathcal R_\lambda$. Section~\ref{sec:max-sparse} contains the
maximal-truncation, sparse-domination, and weighted sparse estimates. Sections~\ref{sec:proof-riesz} and~\ref{sec:proof-comm}
prove the two main inequalities. Section~\ref{sec:not-usual} compares $A_{p,\lambda}$ with the usual Bessel and $\widetilde A$
classes, and Section~\ref{sec:sharpness} proves optimality of the powers.

\section{Geometry, weights, BMO, and the exact conjugation}\label{sec:geometry}

For $x>0$ and $r>0$, write
$$
        B(x,r)=(x-r,x+r)\cap\Rp.
$$
Throughout this paper,
$$
        d\nu_\lambda(x)=x^{2\lambda+1}\,dx.
$$
The measure is distinct from the usual Bessel measure $dm_\lambda=x^{2\lambda}\,dx$ and is
the one produced by the Andersen--Kerman normalization.

\begin{lemma}\label{lem:volume}
For every $x>0$ and $r>0$,
\begin{align}\label{eq:volume}
        \nu_\lambda(B(x,r)) \approx_\lambda r(x+r)^{2\lambda+1}.
\end{align}
Consequently, $(\Rp,|x-y|,d\nu_\lambda)$ is a space of homogeneous type.
\end{lemma}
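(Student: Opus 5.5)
The plan is to compute $\nu_\lambda(B(x,r))$ directly. Set $a=\max\{x-r,0\}$ and $b=x+r$, so that $B(x,r)=(a,b)$ and
$$
        \nu_\lambda(B(x,r))=\int_a^b t^{2\lambda+1}\,dt=\frac{b^{2\lambda+2}-a^{2\lambda+2}}{2\lambda+2}.
$$
The exponent $\beta:=2\lambda+1$ is positive because $\lambda>-1/2$, so $t^\beta$ is increasing. I will split into two cases according to the size of $r$ relative to $x$.

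\emph{Case $r\ge x/2$.} Then $a\le x/2\le r$ and $b\le 3r$, so $b\approx r\approx x+r$. The upper bound follows from
$$
        \int_a^b t^\beta\,dt\le b^{\beta+1}/(\beta+1)\lesssim r(x+r)^\beta.
$$
For the lower bound, note that the interval $(a,b)$ contains $(\max\{x-r,0\},\,x+r)\supset(x+r/2,\,x+r)$. On this subinterval $t\ge (x+r)/2$, so its contribution is at least $(r/2)\bigl((x+r)/2\bigr)^\beta$.

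\emph{Case $r<x/2$.} Every $t\in(x-r,x+r)$ satisfies $t\in(x/2,3x/2)$, hence $t^\beta\approx x^\beta\approx (x+r)^\beta$. Integrating over an interval of length $2r$ then gives $\nu_\lambda(B(x,r))\approx r(x+r)^\beta$. All the implicit constants depend only on $\beta$, and hence only on $\lambda$.

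For the homogeneous-type conclusion, I will check the doubling property $\nu_\lambda(B(x,2r))\lesssim \nu_\lambda(B(x,r))$. This follows from the estimate above, since
$$
        2r(x+2r)^\beta\le 2^{1+\beta}\,r(x+r)^\beta.
$$
The metric $|x-y|$ is a genuine metric on $\Rp$. Balls have positive, finite measure because $r>0$ and $(x+r)^\beta<\infty$. Together these give the space-of-homogeneous-type structure.

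The main point requiring care is the lower bound in the case where the ball touches $0$. There $a=0$ and $r$ is large, and the naive comparison $t\approx x$ fails near $t=0$. This is handled by restricting to the right half of the ball, as in the first case above.
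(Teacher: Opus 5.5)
Your proof is correct and follows essentially the same route as the paper. Both use the same case split at $r\approx x/2$, the upper bound from integrating over $(0,x+r)$, the lower bound from the subinterval $(x+r/2,x+r)$, and doubling via $x+2r\le 2(x+r)$; your remarks that balls have positive finite measure simply make explicit what the paper leaves implicit.
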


\begin{proof}

If $0<r\le x/2$, then $t\approx x$ for
$t\in(x-r,x+r)$, and therefore
$$
        \nu_\lambda(B(x,r))
        =\int_{x-r}^{x+r}t^{2\lambda+1}\,dt
        \approx_\lambda
        r x^{2\lambda+1}
        \approx_\lambda
        r(x+r)^{2\lambda+1} .
$$
If $r>x/2$, then $x+r\approx r$.  The upper bound follows from
$$
        \nu_\lambda(B(x,r))
        \le \int_0^{x+r}t^{2\lambda+1}\,dt
        ={(x+r)^{2\lambda+2}\over {2\lambda+2}}
        \lesssim_\lambda r(x+r)^{2\lambda+1} .
$$
For the lower bound, the interval $(x+r/2,x+r)$ is contained in
$B(x,r)$ and has length $r/2$.  Hence
$$
        \nu_\lambda(B(x,r))
        \ge \int_{x+r/2}^{x+r}t^{2\lambda+1}\,dt
        \approx_\lambda r(x+r)^{2\lambda+1} .
$$
This proves \eqref{eq:volume}.

The doubling property follows by replacing $r$ by $2r$.
\end{proof}

\begin{definition}
For $1<p<\infty$, a weight $U$ belongs to $A_p(d\nu_\lambda)$ if
$$
        [U]_{A_p(d\nu_\lambda)} := \sup_I \left(\frac{1}{\nu_\lambda(I)}\int_I U\,d\nu_\lambda\right)
        \left(\frac{1}{\nu_\lambda(I)}\int_I U^{-1/(p-1)}\,d\nu_\lambda\right)^{p-1} <\infty.
$$
\end{definition}

Throughout the paper a weight means a positive measurable function, finite almost everywhere. No local-finiteness condition is imposed on
$w(x)\,dx$ at the origin unless it is part of the displayed characteristic. Under the Andersen--Kerman condition, weights such as
$x^{-1-\varepsilon}$ are allowed. 

\begin{definition}[\cite{AK81}]\label{def:AK-weight}
For $1<p<\infty$, a positive measurable function $w$ belongs to $A_{p,\lambda}$ if
\begin{align}\label{eq:AK-characteristic-main}
        [w]_{A_{p,\lambda}} := \sup_{I\subset\Rp} \left(\frac{1}{\nu_\lambda(I)}\int_I x^p w(x)\,dx\right)
        \left(\frac{1}{\nu_\lambda(I)} \int_I x^{2\lambda p'} w(x)^{-1/(p-1)}\,dx\right)^{p-1}<\infty.
\end{align}
Here and throughout, intervals are finite intervals in $\Rp$. The classical local finiteness condition $w\in L^1_{\rm
loc}(dx)$ at the origin is not imposed. The required assumption is precisely the finiteness of the two defining
integrals in \eqref{eq:AK-characteristic-main}. Thus singular examples such as $w(x)=x^{-1-\varepsilon}$,
$0<\varepsilon<p$, are included whenever they satisfy the displayed condition.
\end{definition}

\begin{definition}
A function $b\in L^1_{\mathrm{loc}}(\Rp,\nu_\lambda)$ belongs to $\BMO_\lambda$ if
$$
        \|b\|_{\BMO_\lambda} := \sup_{I\subset\Rp} \frac{1}{\nu_\lambda(I)} \int_I |b-b_{I}^{\nu_\lambda}|\,d\nu_\lambda<\infty,
$$
where
$$
        b_{I}^{\nu_\lambda}=\frac{1}{\nu_\lambda(I)}\int_I b\,d\nu_\lambda.
$$
\end{definition}

We note that the $\BMO_\lambda$ coincides with the usual Bessel BMO space $\BMO(dm_\lambda)$ as studied in \cite{DLWY}. 

\begin{proposition}\label{prop:BMO-power-equivalence}
Let $a,b>-1$ and put
$
        d\eta_a(x)=x^a\,dx, \ d\eta_b(x)=x^b\,dx \ \text{on }\Rp.
$
For a measurable function $h$ for which the averages below are defined, set
$$
        \|h\|_{\BMO(d\eta_a)} :=\sup_{I\subset\Rp} \frac{1}{\eta_a(I)} \int_I |h-h_I^{\eta_a}|\,d\eta_a, \qquad
        h_I^{\eta_a}:=\frac{1}{\eta_a(I)}\int_I h\,d\eta_a.
$$
Then
$$
        \|h\|_{\BMO(d\eta_a)} \approx_{a,b} \|h\|_{\BMO(d\eta_b)}.
$$
In particular, since $2\lambda>-1$ and $2\lambda+1>-1$,
$$
        \|h\|_{\BMO(dm_\lambda)} \approx_\lambda \|h\|_{\BMO(d\nu_\lambda)}= \|h\|_{\BMO_\lambda}.
$$
\end{proposition}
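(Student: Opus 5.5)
The plan is to compare both BMO norms with a ``Lebesgue-local'' quantity. Throughout, $h$ is such that the averages in the statement are defined. I will use the elementary fact that for any measure $\mu$,
$$
\frac{1}{\mu(I)}\int_I |h-h_I^\mu|\,d\mu \le 2\inf_{c}\frac{1}{\mu(I)}\int_I|h-c|\,d\mu .
$$
So it suffices to show the following: for every interval $I$ there is a constant $c_I$ with
$$
\frac{1}{\eta_a(I)}\int_I|h-c_I|\,d\eta_a\lesssim_{a,b}\|h\|_{\BMO(d\eta_b)} .
$$
The reverse inequality then follows by exchanging $a$ and $b$. Write $I=(s,t)$ with $0\le s<t$, and split into two cases according to whether $s\ge t/2$ or $s<t/2$.

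\emph{Case $s\ge t/2$.} On $I$ we have $x\approx t$, so $x^a\approx_a t^a$ and $x^b\approx_b t^b$. Hence the $\eta_a$-averages and the $\eta_b$-averages over $I$ are each comparable to Lebesgue averages. Taking $c_I=h_I^{\eta_b}$ gives
$$
\frac{1}{\eta_a(I)}\int_I|h-c_I|\,d\eta_a\approx\frac{1}{|I|}\int_I|h-c_I|\,dx\approx\frac{1}{\eta_b(I)}\int_I|h-h_I^{\eta_b}|\,d\eta_b\le\|h\|_{\BMO(d\eta_b)} .
$$

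\emph{Case $s<t/2$.} Here $I\subset(0,t)$ and $I\supset(t/2,t)$. Since $a>-1$, we have $\eta_a((t/2,t))\approx_a t^{a+1}\approx_a\eta_a((0,t))$, so $\eta_a(I)\approx\eta_a((0,t))$. It therefore suffices to control the oscillation over $(0,t)$.

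Decompose $(0,t)$ into the dyadic pieces $J_k=(2^{-k-1}t,2^{-k}t]$ for $k\ge0$. Let $m_k$ be the $\eta_b$-average of $h$ over $J_k$, and set $c_I=m_0$.
\begin{itemize}
\item The interval $J_k\cup J_{k+1}=(2^{-k-2}t,2^{-k}t]$ has endpoint ratio $4$. On it $x^b$ is essentially constant, and each of $J_k,J_{k+1}$ carries a fixed proportion of its $\eta_b$-mass. Therefore $|m_k-m_{k+1}|\lesssim_b\|h\|_{\BMO(d\eta_b)}$.
\item Telescoping gives $|m_k-c_I|\lesssim k\,\|h\|_{\BMO(d\eta_b)}$.
\item By the first case applied to $J_k$ (where $x^a$ is essentially constant), $\int_{J_k}|h-m_k|\,d\eta_a\lesssim\|h\|_{\BMO(d\eta_b)}\,\eta_a(J_k)$.
\end{itemize}
Combining these,
$$
\int_0^t|h-c_I|\,d\eta_a\le\sum_{k\ge0}\Bigl(\int_{J_k}|h-m_k|\,d\eta_a+|m_k-c_I|\,\eta_a(J_k)\Bigr)\lesssim\|h\|_{\BMO(d\eta_b)}\sum_{k\ge0}(1+k)\,2^{-k(a+1)}t^{a+1}.
$$
The series converges because $a+1>0$, so the right-hand side is $\lesssim\|h\|_{\BMO(d\eta_b)}\,\eta_a((0,t))$. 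As a by-product, $h\in L^1_{\rm loc}(d\eta_a)$ up to the origin, so the $\eta_a$-averages are well defined.

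The main obstacle is this near-origin case. There the two power weights are genuinely different, and only the logarithmic growth $|m_k-c_I|\lesssim k$ of the dyadic averages, summed against the geometric decay $2^{-k(a+1)}$, makes the argument work; this is exactly where $a,b>-1$ is used. The particular statement follows by taking $a=2\lambda$ and $b=2\lambda+1$, both of which exceed $-1$ since $\lambda>-1/2$.
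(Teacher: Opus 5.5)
Your proof is correct, and it takes a genuinely different route from the paper.

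The paper gives no argument of its own. It cites Ho's characterization of $\BMO$ by weights and remarks that $x^{2\lambda}\,dx$ and $x^{2\lambda+1}\,dx$ are doubling. The property that actually makes such a result applicable is that $x^{\alpha}$ with $\alpha>-1$ is a Muckenhoupt weight with respect to Lebesgue measure, so each weighted $\BMO$ space coincides with the Lebesgue one. Doubling alone would not suffice in general, for instance for mutually singular doubling measures.

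You instead prove the equivalence by hand, in three steps:
\begin{enumerate}
\item You reduce to the oscillation about an arbitrary constant.
\item On intervals with endpoint ratio at most $4$, both densities are essentially constant, so the $\eta_a$-, $\eta_b$- and Lebesgue averages are all comparable.
\item Intervals reaching toward the origin are handled through the dyadic pieces $J_k$. Consecutive $\eta_b$-averages differ by $O(\|h\|_{\BMO(d\eta_b)})$, and the resulting linear growth in $k$ is absorbed by the factor $2^{-k(a+1)}$.
\end{enumerate}

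The citation is shorter and covers far more general weights. However, its hypotheses and setting still have to be matched to finite intervals contained in $\Rp$. Your argument is specific to power weights, but it has several advantages:
\begin{itemize}
\item It works directly with the intervals appearing in the definition.
\item It shows exactly where $a,b>-1$ is used and how the constants depend on $a$ and $b$.
\item It yields, as a by-product, that $h\in L^1((0,t),\eta_a)$. So the $\eta_a$-averages near the origin are automatically defined rather than assumed.
\end{itemize}
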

\begin{proof}
We refer to the known result in \cite{Ho} since both $dm_\lambda=x^{2\lambda}dx$ and $d\nu_\lambda=x^{2\lambda+1}dx$ are doubling measure.
\end{proof}

\begin{proposition}\label{prop:exact-weight}
Let $1<p<\infty$ and define
\begin{align}\label{eq:U-definition}
        U(x):=x^{p-2\lambda-1}w(x).
\end{align}
Then
\begin{align}\label{eq:exact-characteristic}
        [U]_{A_p(d\nu_\lambda)}=[w]_{A_{p,\lambda}}.
\end{align}
In particular,
$$
        w\in A_{p,\lambda} \quad\Longleftrightarrow\quad U\in A_p(d\nu_\lambda).
$$
\end{proposition}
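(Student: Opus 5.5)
The identity \eqref{eq:exact-characteristic} should follow from a direct interval-by-interval computation: I will show that for every finite interval $I\subset\Rp$ the two factors in the definition of $[U]_{A_p(d\nu_\lambda)}$ coincide with the two factors in \eqref{eq:AK-characteristic-main}. Equality of the suprema is then immediate, and so is the equivalence $w\in A_{p,\lambda}\Leftrightarrow U\in A_p(d\nu_\lambda)$. Both sides are suprema of nonnegative (possibly infinite) quantities, so nothing has to be assumed about finiteness in advance.

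For the first factor, I will substitute \eqref{eq:U-definition} into $d\nu_\lambda=x^{2\lambda+1}\,dx$:
$$
U(x)\,d\nu_\lambda(x)=x^{p-2\lambda-1}w(x)\,x^{2\lambda+1}\,dx=x^p w(x)\,dx .
$$
Hence $\frac{1}{\nu_\lambda(I)}\int_I U\,d\nu_\lambda=\frac{1}{\nu_\lambda(I)}\int_I x^p w\,dx$.

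For the second factor, I will compute
$$
U^{-1/(p-1)}x^{2\lambda+1}=x^{-(p-2\lambda-1)/(p-1)+2\lambda+1}\,w^{-1/(p-1)} .
$$
Simplifying the exponent gives
$$
\frac{-p+2\lambda+1+(2\lambda+1)(p-1)}{p-1}=\frac{2\lambda p}{p-1}-\frac{p-1}{p-1}\cdot 0=\frac{2\lambda p}{p-1}=2\lambda p'.
$$
To check this, note that $2\lambda+1-p+(2\lambda+1)(p-1)=(2\lambda+1)p-p=2\lambda p$. Therefore
$$
\frac{1}{\nu_\lambda(I)}\int_I U^{-1/(p-1)}\,d\nu_\lambda=\frac{1}{\nu_\lambda(I)}\int_I x^{2\lambda p'}w^{-1/(p-1)}\,dx .
$$

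Raising the second identity to the power $p-1$, multiplying by the first, and taking the supremum over all intervals $I$ yields \eqref{eq:exact-characteristic}. The normalizing measure $\nu_\lambda(I)$ is the same on both sides, so no comparability constants from Lemma~\ref{lem:volume} enter, and the equality is exact. The only point needing care is the exponent arithmetic, and I do not expect any genuine obstacle.
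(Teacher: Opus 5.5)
Your proposal is correct and is the paper's proof: both show, for each interval $I$, that $U\,d\nu_\lambda=x^p w\,dx$ and $U^{-1/(p-1)}\,d\nu_\lambda=x^{2\lambda p'}w^{-1/(p-1)}\,dx$ using the identity $2\lambda+1-\frac{p-2\lambda-1}{p-1}=2\lambda p'$, and then take the supremum over $I$. The term $-\frac{p-1}{p-1}\cdot 0$ in your exponent display is harmless but unnecessary; your check $(2\lambda+1)p-p=2\lambda p$ is the clean version of that step.
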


\begin{proof}
For every interval $I\subset\Rp$,
$$
        \int_I U\,d\nu_\lambda =\int_I x^{p-2\lambda-1}w(x)x^{2\lambda+1}\,dx =\int_I x^p w(x)\,dx.
$$
Also,
$$
        \int_I U^{-1/(p-1)}\,d\nu_\lambda =\int_I x^{-(p-2\lambda-1)/(p-1)}w(x)^{-1/(p-1)}
        x^{2\lambda+1}\,dx =\int_I x^{2\lambda p'}w(x)^{-1/(p-1)}\,dx,
$$
because
$$
        2\lambda+1-\frac{p-2\lambda-1}{p-1}=2\lambda p'.
$$
Putting these two identities into the definition of $A_p(d\nu_\lambda)$ gives \eqref{eq:exact-characteristic}.
\end{proof}

Recall the conjugated operator
\begin{align}\label{eq:conjugated-operator}
        \mathcal R_\lambda F(x):=\frac{1}{x}R_\lambda (f)(x),\quad \text{where }f(y)=yF(y).
\end{align}
Under this conjugation the Andersen--Kerman weight becomes an ordinary $A_p(d\nu_\lambda)$ weight.

\begin{proposition}\label{prop:norm-identities}
Let $1<p<\infty$, let $U$ be as in \eqref{eq:U-definition}, and put $f(x)=xF(x)$. Then
\begin{align}\label{eq:norm-F}
        \|f\|_{L^p(\Rp,w\,dx)}= \|F\|_{L^p(\Rp,U\,d\nu_\lambda)}
\end{align}
and
\begin{align}\label{eq:norm-RF}
        \|R_\lambda f\|_{L^p(\Rp,w\,dx)}= \|\mathcal R_\lambda F\|_{L^p(\Rp,U\,d\nu_\lambda)}.
\end{align}
Moreover,
\begin{align}\label{eq:norm-comm}
        \|[b,R_\lambda]f\|_{L^p(\Rp,w\,dx)}= \|[b,\mathcal R_\lambda]F\|_{L^p(\Rp,U\,d\nu_\lambda)}.
\end{align}
\end{proposition}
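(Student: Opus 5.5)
The plan is a direct change of variables. All three identities follow from the pointwise relations $f(x)=xF(x)$, $R_\lambda f(x)=x\,\mathcal R_\lambda F(x)$ and $[b,R_\lambda]f(x)=x\,[b,\mathcal R_\lambda]F(x)$, combined with the elementary weight identity
$$
        x^p w(x)=x^p\cdot x^{-(p-2\lambda-1)}U(x)=U(x)\,x^{2\lambda+1},
$$
which comes from \eqref{eq:U-definition}.

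For \eqref{eq:norm-F} I will compute directly:
$$
        \int_{\Rp}|f|^p w\,dx=\int_{\Rp}|F(x)|^p x^p w(x)\,dx=\int_{\Rp}|F|^pU\,d\nu_\lambda .
$$
For \eqref{eq:norm-RF}, the definition \eqref{eq:conjugated-operator} gives $R_\lambda f(x)=x\,\mathcal R_\lambda F(x)$ pointwise. Applying the same computation with $F$ replaced by $\mathcal R_\lambda F$ then yields the identity. I will note that both sides may be infinite simultaneously, so the identities hold as equalities in $[0,\infty]$. The class of $F$ on which $\mathcal R_\lambda F$ is defined is exactly the image under $F\mapsto xF$ of the domain of $R_\lambda$, say $L^p(w\,dx)$ or a dense class; this is consistent because $F\mapsto xF$ is an isometry by \eqref{eq:norm-F}.

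For \eqref{eq:norm-comm}, I will check the pointwise identity
$$
        [b,\mathcal R_\lambda]F(x)=b(x)\,\tfrac1x R_\lambda(yF(y))(x)-\tfrac1x R_\lambda\big(y\,b(y)F(y)\big)(x)=\tfrac1x\big(bR_\lambda f-R_\lambda(bf)\big)(x)=\tfrac1x[b,R_\lambda]f(x).
$$
This uses that multiplication by $b$ commutes with multiplication by $y$, so that $y\,(bF)(y)=b(y)f(y)$. The norm identity then follows exactly as before.

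The only point needing care is the definitional one: $R_\lambda(bf)$ must make sense. This requires $bf$ to lie in the domain of $R_\lambda$, for instance $f$ bounded with compact support in $\Rp$ and $b\in L^1_{\rm loc}$, or more generally interpreting $R_\lambda$ via its kernel representation off the diagonal. Since the conjugation is merely multiplication by $x$, it maps compactly supported bounded functions to themselves. The two commutators are therefore defined on corresponding classes, and no genuine obstacle arises.
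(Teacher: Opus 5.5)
Your proposal is correct and follows the paper's proof: you use the identity $x^p w(x)\,dx = U\,d\nu_\lambda$ together with the pointwise relations $R_\lambda f = x\,\mathcal R_\lambda F$ and $[b,\mathcal R_\lambda]F = x^{-1}[b,R_\lambda]f$, and then integrate. Your remarks on domains (equality in $[0,\infty]$, and preservation of bounded functions with compact support in $\Rp$) are harmless refinements that the paper leaves implicit. One wording slip: the natural domain of $\mathcal R_\lambda$ is the preimage of the domain of $R_\lambda$ under $F\mapsto xF$, not its image.
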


\begin{proof}
Since $U\,d\nu_\lambda=x^p w(x)\,dx$,
$$
        \|F\|_{L^p(\Rp,U\,d\nu_\lambda)}^p =\int_0^\infty |F(x)|^p x^p w(x)\,dx =\int_0^\infty |f(x)|^p w(x)\,dx.
$$
This gives \eqref{eq:norm-F}. Since $\mathcal R_\lambda F=x^{-1}R_\lambda f$,
$$
        \|\mathcal R_\lambda F\|_{L^p(\Rp,U\,d\nu_\lambda)}^p =\int_0^\infty \left|\frac{R_\lambda
        f(x)}{x}\right|^p x^p w(x)\,dx =\int_0^\infty |R_\lambda f(x)|^p w(x)\,dx.
$$
This gives \eqref{eq:norm-RF}. Finally,
$$
        [b,\mathcal R_\lambda]F(x) =b(x)\frac{1}{x}R_\lambda(yF(y))(x) -\frac{1}{x}R_\lambda(yb(y)F(y))(x) =\frac{1}{x}[b,R_\lambda]f(x).
$$
The same norm calculation proves \eqref{eq:norm-comm}.
\end{proof}

\section{Kernel estimates for the conjugated operator}\label{sec:kernel}

Let $K_\lambda(x,y)$ denote the kernel of $R_\lambda$ with respect to $dm_\lambda(y)=y^{2\lambda}\,dy$:
\begin{align}\label{eq:R-kernel-dm}
        R_\lambda f(x)=\pvs\int_0^\infty K_\lambda(x,y)f(y)\,dm_\lambda(y).
\end{align}
Then $\mathcal R_\lambda$ has kernel with respect to $d\nu_\lambda(y)=y^{2\lambda+1}\,dy$ given by
\begin{align}\label{eq:conjugated-kernel}
        \mathcal K_\lambda(x,y)=\frac{1}{x}K_\lambda(x,y).
\end{align}
If $f(y)=yF(y)$, then
$$
        \mathcal R_\lambda F(x) =\frac{1}{x}\pvs\int_0^\infty K_\lambda(x,y)yF(y)y^{2\lambda}\,dy
        =\pvs\int_0^\infty \frac{K_\lambda(x,y)}{x}F(y)y^{2\lambda+1}\,dy.
$$

The Andersen--Kerman estimates are used below in the following form, stated directly for the conjugated kernel. In the far region
$y\ge2x$, differentiating $K_\lambda(x,y)/x$ requires the cancellation in $x\partial_x K_\lambda(x,y)-K_\lambda(x,y)$, so separate
estimates for $K_\lambda$ and $\partial_x K_\lambda$ are insufficient.

The statement also records the local information used for sharpness. After subtraction of the Hilbert principal part, the remainder is
logarithmic in general. The lower-bound argument uses only the one-sided sign and size of the principal singularity.

Before stating the following lemma, we declare that for the rest of the paper we use the notation $J\Subset\Rp$ to mean that $J$ is a compact interval contained in $(0,\infty)$.

\begin{lemma}\label{lem:AK-kernel-package}
There is a constant $C_\lambda$ such that $\mathcal K_\lambda$ is $C^1$ away from the diagonal and the following estimates hold.
If $x/2<y<2x$ and $x\ne y$, then
\begin{align}\label{eq:AK-local-conj-size}
        |\mathcal K_\lambda(x,y)| \le C_\lambda\frac{1}{x^{2\lambda+1}|x-y|},
\end{align}
\begin{align}\label{eq:AK-local-conj-deriv}
        |\partial_x\mathcal K_\lambda(x,y)|+|\partial_y\mathcal K_\lambda(x,y)| \le C_\lambda\frac{1}{x^{2\lambda+1}|x-y|^2}.
\end{align}
Moreover, for every compact interval $J\Subset\Rp$ there are a non-zero constant $c_\lambda$
and a remainder $E_{\lambda,J}$ such that, for $x,y\in J$, $x\ne y$,
\begin{align}\label{eq:AK-local-leading-term}
        \mathcal K_\lambda(x,y) =\frac{c_\lambda}{x^{2\lambda+1}}\frac{1}{x-y}+E_{\lambda,J}(x,y),
\end{align}
and
\begin{align}\label{eq:AK-local-log-remainder}
        |E_{\lambda,J}(x,y)| \le C_{\lambda,J} \left(1+\log^+\frac{\ell(J)}{|x-y|}\right).
\end{align}
Here $\ell(J)$ denotes the length of $J$. In addition, there are $\varepsilon_\lambda\in\{-1,1\}$, $s_J>1$, and
$c_{\lambda,J}>0$ such that, whenever $x,y\in J$, $x\ne y$, and $s_J^{-1}<y/x<s_J$,
\begin{align}\label{eq:AK-local-sign-conj}
        \varepsilon_\lambda \,\operatorname{sgn}(x-y)\, \mathcal K_\lambda(x,y)y^{2\lambda+1} \ge \frac{c_{\lambda,J}}{|x-y|}.
\end{align}

If $y\ge2x$, then
\begin{align}\label{eq:AK-right-conj-size}
        |\mathcal K_\lambda(x,y)| \le C_\lambda\frac{1}{y^{2\lambda+2}},
\end{align}
\begin{align}\label{eq:AK-right-conj-deriv}
        |\partial_x\mathcal K_\lambda(x,y)|+|\partial_y\mathcal K_\lambda(x,y)| \le C_\lambda\frac{1}{y^{2\lambda+3}}.
\end{align}

If $x\ge2y$, then
\begin{align}\label{eq:AK-left-conj-size}
        |\mathcal K_\lambda(x,y)| \le C_\lambda\frac{1}{x^{2\lambda+2}},
\end{align}
\begin{align}\label{eq:AK-left-conj-deriv}
        |\partial_x\mathcal K_\lambda(x,y)|+|\partial_y\mathcal K_\lambda(x,y)| \le C_\lambda\frac{1}{x^{2\lambda+3}}.
\end{align}
\end{lemma}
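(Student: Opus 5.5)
Since $\mathcal K_\lambda(x,y)=x^{-1}K_\lambda(x,y)$ by \eqref{eq:conjugated-kernel}, every estimate in the lemma is a statement about the Bessel Riesz kernel $K_\lambda$ and its derivatives. The plan is to start from an explicit representation of $K_\lambda$ and analyse it region by region. I would use the Muckenhoupt--Stein/Betancor representation
$$
K_\lambda(x,y)=-\frac{2\lambda}{\pi}\int_0^\pi \frac{(x-y\cos\theta)(\sin\theta)^{2\lambda-1}}{(x^2+y^2-2xy\cos\theta)^{\lambda+1}}\,d\theta ,
$$
interpreted for $-1/2<\lambda<0$ via an integration by parts in $\theta$, or equivalently the formula for $K_\lambda$ as $\partial_x$ of the Bessel fractional kernel $\int_0^\infty P_t(x,y)\,dt$ of Muckenhoupt--Stein. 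The Andersen--Kerman paper \cite{AK81} and Betancor et al.\ \cite{bfbmt} record the standard bounds
$$
|K_\lambda(x,y)|\lesssim \frac{1}{x^{2\lambda}|x-y|}\quad\text{near the diagonal},\qquad
|K_\lambda(x,y)|\lesssim \frac{x}{y^{2\lambda+2}}\quad (y\ge 2x),\qquad
|K_\lambda(x,y)|\lesssim x^{-2\lambda-1}\quad (x\ge 2y).
$$
Dividing by $x$ immediately gives \eqref{eq:AK-local-conj-size}, \eqref{eq:AK-right-conj-size} and \eqref{eq:AK-left-conj-size}. Note that the bound in the region $y\ge 2x$ uses the vanishing factor $x$ coming from the odd-type behaviour of $K_\lambda$ in $x$. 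This factor comes from the term $x-y\cos\theta$ after symmetrizing $\theta\mapsto\pi-\theta$.

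For the derivative bounds, the regions $x/2<y<2x$ and $x\ge 2y$ pose no difficulty. I would differentiate under the integral and use $x\approx y$, respectively $x^2+y^2-2xy\cos\theta\approx x^2$. The factor $1/x$ contributes $x^{-2}K_\lambda$, which is dominated by the claimed right-hand sides.

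The region $y\ge 2x$ is where I expect the main obstacle, as the paper itself warns. We have
$$
\partial_x\Big(\frac{K_\lambda(x,y)}{x}\Big)=\frac{x\partial_xK_\lambda-K_\lambda}{x^2},
$$
so we need $|x\partial_xK_\lambda-K_\lambda|\lesssim x^2y^{-2\lambda-3}$. My approach is to expand the integrand in powers of $x/y$. Writing $D=x^2+y^2-2xy\cos\theta$, the numerator $x-y\cos\theta$ and $D^{-\lambda-1}$ are smooth in $x$ for $|x|<y/2$. Hence $K_\lambda(x,y)=a_1(y)x+a_3(y)x^3+\dots$, and the terms of even order vanish after the $\theta\mapsto\pi-\theta$ symmetry, since $K_\lambda(\cdot,y)$ extends oddly in $x$. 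More cleanly, $g(x)=K_\lambda(x,y)$ is smooth on $(-y/2,y/2)$, odd, with $|g'''|\lesssim y^{-2\lambda-4}$ there, by homogeneity of degree $-2\lambda-1$. Taylor's theorem then gives
$$
|xg'(x)-g(x)|=\Big|\int_0^x t\,g''(t)\,dt\Big|\lesssim x^2\sup|g''|\cdot\frac{x}{x}\ \lesssim\ x^3 y^{-2\lambda-4}\le x^2y^{-2\lambda-3},
$$
using $g''(0)=0$ by oddness. The $y$-derivative is easier: $\partial_y(K_\lambda/x)=x^{-1}\partial_yK_\lambda$, and $\partial_yK_\lambda$ is again odd in $x$ and of size $x\,y^{-2\lambda-3}$.

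For the local statements \eqref{eq:AK-local-leading-term}--\eqref{eq:AK-local-sign-conj} on $J\Subset\Rp$, I would isolate the contribution of $\theta$ near $0$ to the integral. Writing $D=(x-y)^2+2xy(1-\cos\theta)$, the $\theta$-integral near $0$ behaves like
$$
\int_0 \frac{(x-y)\,\theta^{2\lambda-1}}{((x-y)^2+xy\theta^2)^{\lambda+1}}\,d\theta=\frac{c}{(xy)^{\lambda}}\cdot\frac{1}{x-y},
$$
obtained by the substitution $\theta=|x-y|s/\sqrt{xy}$. With $x\approx y$ on $J$, this yields the principal term $c_\lambda x^{-2\lambda}(x-y)^{-1}$ for $K_\lambda$. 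The error terms come from the difference $y^{-\lambda}-x^{-\lambda}=O(|x-y|)$, from the term $y(1-\cos\theta)$ in the numerator, and from the corrections $\theta^2$ versus $2(1-\cos\theta)$. Each of these produces integrals of the form $\int (\epsilon^2+\theta^2)^{-1/2}\,d\theta\approx\log(1/\epsilon)$, which gives \eqref{eq:AK-local-log-remainder}. Finally, the sign estimate \eqref{eq:AK-local-sign-conj} follows from \eqref{eq:AK-local-leading-term}: the principal term dominates the logarithmic remainder once $|x-y|$ is small, that is, once $y/x$ is close enough to $1$, which fixes $s_J$. We set $\varepsilon_\lambda=\operatorname{sgn}c_\lambda$, and the factor $y^{2\lambda+1}/x^{2\lambda+1}$ is comparable to $1$ on $J$.
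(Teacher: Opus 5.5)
For $\lambda>0$ your argument is sound and has the same skeleton as the paper's proof: homogeneity, a Hilbert-type principal part $-\frac1\pi(xy)^{-\lambda}(x-y)^{-1}$ with logarithmic remainder near the diagonal, the sign condition read off from that principal part, and, for $y\ge2x$, oddness and smoothness of $K_\lambda(\cdot,y)$ at $x=0$. Your identity $xg'(x)-g(x)=\int_0^x t\,g''(t)\,dt$ is exactly the cancellation the paper extracts from $K_\lambda(t,1)=t\,a_\lambda(t^2)$.

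The genuine gap is the range $-1/2<\lambda<0$, which the lemma covers. There $2\lambda-1<-1$, so the angular integral diverges at $\theta=0$ and at $\theta=\pi$. Everything you derive beyond the cited size bounds is computed from that divergent expression:
\begin{itemize}
\item differentiation under the integral sign;
\item the bounds $|\partial_x^kK_\lambda(x,y)|\lesssim y^{-2\lambda-1-k}$ for $|x|<y/2$;
\item the substitution $\theta=|x-y|s/\sqrt{xy}$, which produces $\int_0^\infty s^{2\lambda-1}(1+s^2)^{-\lambda-1}\,ds$, divergent at $s=0$ when $\lambda<0$.
\end{itemize}
Saying the formula is ``interpreted via an integration by parts'' does not settle this. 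You would have to write the regularized kernel down, prove it is the kernel of $R_\lambda$ (in the paper $K_\lambda$ is the Abel limit of the Andersen--Kerman kernel $Q_\lambda(\tau,x,y)$), and redo every estimate from it.

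Your fallback $K_\lambda=\partial_x\int_0^\infty P_t(x,y)\,dt$ fails in this range too. For $\lambda\le0$ the $t$-integral diverges, because the Bessel Poisson kernel decays only like $t^{-2\lambda-1}$. Moving $\partial_x$ inside restores convergence, but you would then need a representation of $P_t$ valid for $\lambda<0$, and its angular formula carries the same factor $(\sin\theta)^{2\lambda-1}$.

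The paper avoids the angular formula entirely, noting explicitly that it is available only for $\lambda>0$. It takes the following ingredients, all valid for every $\lambda>-1/2$:
\begin{itemize}
\item the near-diagonal expansion and far-region size bounds from \cite{bhnv}, whose compensated definition covers $-1/2<\lambda\le0$;
\item the local gradient bound from \cite{CS14}, valid for all type indices;
\item smoothness and oddness at the origin from the hypergeometric representation in \cite{bhnv}.
\end{itemize}

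To keep your route instead, one concrete regularization is this. Put $u=\cos\theta$ and $h(u)=(x-yu)(x^2+y^2-2xyu)^{-\lambda-1}$. Using $2\lambda(1-u^2)^{\lambda-1}=2\lambda(1-u^2)^{\lambda}-u\frac{d}{du}(1-u^2)^{\lambda}$ and integrating by parts gives, for $\lambda>0$ and $x\ne y$,
\[
K_\lambda(x,y)=-\frac1\pi\int_{-1}^{1}(1-u^2)^{\lambda}\bigl[2\lambda h(u)+(uh)'(u)\bigr]\,du .
\]
The right-hand side makes sense for all $\lambda>-1$. You would still owe two things:
\begin{itemize}
\item a proof that this equals $K_\lambda$ for $-1/2<\lambda<0$, which is an analytic continuation in $\lambda$ requiring control of the Abel limit $\tau\downarrow0$ locally uniformly in $\lambda$;
\item a redo of the principal-part, logarithmic-remainder, derivative and oddness computations on this formula.
\end{itemize}

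A smaller point: $x,y\in J$ does not force $x\approx y$ (take $J=[1,10]$). On the part of $J\times J$ outside the cone $x/2<y<2x$, where $|x-y|\ge\frac12\min J$, you must bound the remainder by continuity of the kernel on that compact set, as the paper does.
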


\begin{proof}
The normalization is as follows. With the normalization of $R_\lambda$ used in \eqref{eq:R-kernel-dm}, Andersen and Kerman express the generalized Hankel conjugate transform, for $\tau>0$, by the Poisson-regularized kernel $Q_\lambda(\tau,x,y)$ in the notation of \cite[(1.2)--(1.3)]{AK81}:
$$
        Q_\lambda(\tau,x,y) =-(xy)^{-\lambda+1/2} \int_0^\infty e^{-\tau r} J_{\lambda+1/2}(xr)J_{\lambda-1/2}(yr)\,r\,dr.
$$
The principal-value kernel $K_\lambda(x,y)$ in \eqref{eq:R-kernel-dm} is obtained as the Abel limit
$$
        K_\lambda(x,y)=\lim_{\tau\downarrow0}Q_\lambda(\tau,x,y),
        \qquad x\ne y.
$$

For $-1/2<\lambda\le0$, the same kernel is obtained from the compensated heat-potential definition of Betancor--Harboure--Nowak--Viviani \cite[pp.~104--105]{bhnv}. The Hankel-transform identity given there is valid for every $\lambda>-1/2$ and agrees with the Bessel-product normalization above.

For $\tau>0$, the change of variables $\rho=xr$ gives
$$
        Q_\lambda(\tau,x,y)
        =x^{-2\lambda-1}Q_\lambda(\tau/x,1,y/x).
$$
Taking the Abel limit and setting
$\Psi_\lambda(t)=\lim_{s\downarrow0}Q_\lambda(s,1,t)$ yields the homogeneous representation
\begin{align}\label{eq:AK-Phi-representation}
        K_\lambda(x,y)=x^{-2\lambda-1}\Psi_\lambda(y/x), \qquad \mathcal K_\lambda(x,y)=x^{-2\lambda-2}\Phi_\lambda(y/x),
\end{align}
for $x,y>0$, $x\ne y$, where $\Phi_\lambda$ is the corresponding function for the conjugated kernel. The Calder\'on--Zygmund size and smoothness estimites therefore retain the same form under simultaneous dilations of $x$ and $y$.

For the full range $\lambda>-1/2$, we use the heat-kernel estimates of Betancor--Harboure--Nowak--Viviani \cite[pp.~117--119]{bhnv}. In the present notation, they give
\begin{align}\label{eq:AK-pointwise-far-original}
 |K_\lambda(x,y)|&\le C_\lambda x^{-2\lambda-1}, &&0<y\le x/2,\\
 |K_\lambda(x,y)|&\le C_\lambda x y^{-2\lambda-2}, &&y\ge2x,
\end{align}
and, when $x/2<y<2x$,
\begin{align}\label{eq:AK-pointwise-local-original}
 K_\lambda(x,y)
 =-\frac{1}{\pi}(xy)^{-\lambda}\frac{1}{x-y}
 +O_\lambda\!\left(y^{-2\lambda-1}
 \left(1+\log^+\frac{xy}{(x-y)^2}\right)\right).
\end{align}
After division by $x$ and the substitution $t=y/x$, this becomes
\begin{align}\label{eq:AK-Phi-principal}
 \Phi_\lambda(t)
 =-\frac{1}{\pi}\frac{t^{-\lambda}}{1-t}
 +O_\lambda\!\left(t^{-2\lambda-1}
 \left(1+\log^+\frac{t}{(1-t)^2}\right)\right),
 \qquad \frac12<t<2,
\end{align}
so that
\begin{align}\label{eq:AK-Phi-local}
 |\Phi_\lambda(t)|\le \frac{C_\lambda}{|1-t|},
 \qquad \frac12<t<2,\quad t\ne1.
\end{align}

We next obtain the derivative estimate without using the classical angular formula, which is available only for $\lambda>0$. By the gradient criterion and the Riesz-kernel theorem of Castro--Szarek \cite[(2.4), p.~639, Theorem~2.2, p.~641, and the proof of the Riesz-transform case, p.~647]{CS14}, the first-order Bessel Riesz kernel satisfies, for every $\lambda>-1/2$,
\begin{align*}
 |\partial_xK_\lambda(x,y)|+|\partial_yK_\lambda(x,y)|
 \le \frac{C_\lambda}{|x-y|\,m_\lambda(B(x,|x-y|))}.
\end{align*}
If $x/2<y<2x$ and $r=|x-y|$, then $r<x$ and
$m_\lambda(B(x,r))\simeq_\lambda r x^{2\lambda}$. Together with \eqref{eq:AK-pointwise-local-original}, this gives
\begin{align*}
 |\partial_x\mathcal K_\lambda(x,y)|+|\partial_y\mathcal K_\lambda(x,y)|
 &\le \frac{1}{x}\bigl(|\partial_xK_\lambda(x,y)|+|\partial_yK_\lambda(x,y)|\bigr)
       +\frac{|K_\lambda(x,y)|}{x^2}
 \le C_\lambda\frac{1}{x^{2\lambda+1}r^2}.
\end{align*}
Since $\partial_y\mathcal K_\lambda(x,y)=x^{-2\lambda-3}\Phi_\lambda'(y/x)$, it follows that
\begin{align}\label{eq:AK-Phi-local-derivative}
 |\Phi_\lambda'(t)|\le \frac{C_\lambda}{|1-t|^2},
 \qquad \frac12<t<2,\quad t\ne1.
\end{align}

The local sign estimate follows directly from \eqref{eq:AK-Phi-principal}. Since the principal term has size $|1-t|^{-1}$ and the remainder is logarithmic, we can choose $s>1$ close enough to $1$ so that the principal term dominates for $s^{-1}<t<s$, $t\ne1$. Thus, there exist $\varepsilon_\lambda\in\{-1,1\}$ and $c_\lambda'>0$ such that
\begin{align}\label{eq:AK-Phi-sign}
        \varepsilon_\lambda\operatorname{sgn}(1-t) \Phi_\lambda(t)t^{2\lambda+1} \ge \frac{c_\lambda'}{|1-t|}, \qquad
        s^{-1}<t<s, \quad t\ne1.
\end{align}

For the far regions, use the homogeneity
$\mathcal K_\lambda(rx,ry)=r^{-2\lambda-2}\mathcal K_\lambda(x,y)$ and put
\begin{align}\label{eq:AK-far}
 A_\lambda(t):=\mathcal K_\lambda(t,1),\qquad
 B_\lambda(t):=\mathcal K_\lambda(1,t),\qquad 0<t\le\frac12.
\end{align}
The estimates in \eqref{eq:AK-pointwise-far-original} show that $A_\lambda$ and $B_\lambda$ are bounded. To control their derivatives at $t=0$, we use the explicit hypergeometric representation for $K_\lambda$ given by Betancor--Harboure--Nowak--Viviani \cite[p.~117]{bhnv}. Its hypergeometric argument is
$4t^2/(1+t^2)^2$, which stays in $[0,16/25]$ for $0\le t\le1/2$. After the powers of $t$ are factored out, that formula shows that there are functions $a_\lambda,b_\lambda\in C^\infty([0,1/4])$ such that
\begin{align*}
 K_\lambda(t,1)=t\,a_\lambda(t^2),\qquad
 K_\lambda(1,t)=b_\lambda(t^2),\qquad 0<t\le\frac12.
\end{align*}
Therefore $A_\lambda(t)=a_\lambda(t^2)$ and $B_\lambda(t)=b_\lambda(t^2)$, and both functions extend to $C^1$ functions on $[0,1/2]$. Hence
\begin{align}\label{eq:AK-far-bounds}
 |A_\lambda(t)|+|B_\lambda(t)|+|A_\lambda'(t)|+|B_\lambda'(t)|
 \le C_\lambda,\qquad 0<t\le\frac12.
\end{align}

These single-variable estimates now immediately translate back to the announced Calder\'on--Zygmund bounds. In the local region, setting $t=y/x$ in \eqref{eq:AK-Phi-representation} gives $\mathcal K_\lambda(x,y)=x^{-2\lambda-2}\Phi_\lambda(t)$ and $|x-y|=x|1-t|$. The size estimate \eqref{eq:AK-Phi-local} implies \eqref{eq:AK-local-conj-size}. Differentiating the identity yields
\begin{align}
        \partial_y\mathcal K_\lambda(x,y) &=x^{-2\lambda-3}\Phi_\lambda'(t), \notag \\
        \partial_x\mathcal K_\lambda(x,y) &=x^{-2\lambda-3}\bigl[-(2\lambda+2)\Phi_\lambda(t)-t\Phi_\lambda'(t)\bigr]. \notag
\end{align}
Since $1/2<t<2$, combining \eqref{eq:AK-Phi-local} and \eqref{eq:AK-Phi-local-derivative} produces the local derivative bound \eqref{eq:AK-local-conj-deriv}. The principal expansion \eqref{eq:AK-Phi-principal} implies that for $x,y\in J\Subset\Rp$ satisfying $x/2<y<2x$,
$$
        \mathcal K_\lambda(x,y) =x^{-2\lambda-2}\frac{c_\lambda (y/x)^{-\lambda}}{1-y/x}
        +x^{-2\lambda-2} O_\lambda\left(1+\log^+\frac{x}{|x-y|}\right).
$$
Because $(y/x)^{-\lambda}=1+O_\lambda(|1-y/x|)$ within the local cone, the difference between this principal term and $c_\lambda x^{-2\lambda-1}(x-y)^{-1}$ is bounded on the part of $J\times J$ inside the local cone. Since $x$ is bounded above and below on the fixed compact interval $J$, the required estimates hold there.
On the part of $J\times J$ outside the local cone $x/2<y<2x$, the points are separated from the diagonal. Hence the same remainder bound holds there, after increasing $C_{\lambda,J}$, by the continuity of the kernel away from the diagonal.
Thus properties \eqref{eq:AK-local-leading-term}--\eqref{eq:AK-local-log-remainder} hold. The local sign condition \eqref{eq:AK-local-sign-conj} follows by rewriting \eqref{eq:AK-Phi-sign} as
$$
        \varepsilon_\lambda\operatorname{sgn}(x-y) \mathcal K_\lambda(x,y)y^{2\lambda+1} =\varepsilon_\lambda\operatorname{sgn}(1-t)
        x^{-1}\Phi_\lambda(t)t^{2\lambda+1} \ge \frac{c}{x|1-t|} =\frac{c}{|x-y|}.
$$

Finally, if $y\ge2x$, then $\mathcal K_\lambda(x,y)=y^{-2\lambda-2}A_\lambda(x/y)$. The boundedness of $A_\lambda$ and $A_\lambda'$ from \eqref{eq:AK-far-bounds} immediately gives the right-far bounds \eqref{eq:AK-right-conj-size} and \eqref{eq:AK-right-conj-deriv}. Symmetrically, if $x\ge2y$, then $\mathcal K_\lambda(x,y)=x^{-2\lambda-2}B_\lambda(y/x)$, which yields the left-far bounds \eqref{eq:AK-left-conj-size} and \eqref{eq:AK-left-conj-deriv}. The proof is complete.
\end{proof}

\begin{proposition}\label{prop:AK-CZ-kernel}
There are constants $C_\lambda>0$ and $\delta\in(0,1]$, depending only on $\lambda$, such that for all $x\neq y$,
\begin{align}\label{eq:CZ-size}
        |\mathcal K_\lambda(x,y)| \le \frac{C_\lambda}{\nu_\lambda(B(x,|x-y|))},
\end{align}
and, whenever $|x-x'|\le |x-y|/2$,
\begin{align}\label{eq:CZ-smooth-x}
        |\mathcal K_\lambda(x,y)-\mathcal K_\lambda(x',y)| \le C_\lambda \left(\frac{|x-x'|}{|x-y|}\right)^\delta
        \frac{1}{\nu_\lambda(B(x,|x-y|))}.
\end{align}
Similarly, whenever $|y-y'|\le |x-y|/2$,
\begin{align}\label{eq:CZ-smooth-y}
        |\mathcal K_\lambda(x,y)-\mathcal K_\lambda(x,y')| \le C_\lambda \left(\frac{|y-y'|}{|x-y|}\right)^\delta
        \frac{1}{\nu_\lambda(B(x,|x-y|))}.
\end{align}
Moreover, $\mathcal R_\lambda$ is bounded on $L^2(\Rp, d\nu_\lambda)$.
\end{proposition}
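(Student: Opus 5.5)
The plan is to read off the three Calder\'on--Zygmund estimates from Lemma~\ref{lem:AK-kernel-package}, region by region, using Lemma~\ref{lem:volume} to identify $\nu_\lambda(B(x,|x-y|))\approx r(x+r)^{2\lambda+1}$ with $r=|x-y|$. The $L^2$ bound is then transferred from the known unweighted boundedness of $R_\lambda$ through the norm identity of Proposition~\ref{prop:norm-identities}.

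\textbf{Size.} There are three regions.
\begin{itemize}
\item If $x/2<y<2x$, then $r<x$, so $\nu_\lambda(B(x,r))\approx r x^{2\lambda+1}$, and \eqref{eq:AK-local-conj-size} is exactly \eqref{eq:CZ-size}.
\item If $y\ge 2x$, then $r\approx y$ and $x+r\approx y$, so $\nu_\lambda(B(x,r))\approx y^{2\lambda+2}$, which matches \eqref{eq:AK-right-conj-size}.
\item If $x\ge 2y$, then $r\approx x$ and $\nu_\lambda(B(x,r))\approx x^{2\lambda+2}$, which matches \eqref{eq:AK-left-conj-size}.
\end{itemize}

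\textbf{Smoothness.} I will prove \eqref{eq:CZ-smooth-x} and \eqref{eq:CZ-smooth-y} with $\delta=1$ via the mean value theorem. Take $|x-x'|\le r/2$; the case $|y-y'|\le r/2$ is analogous. Every point $\xi$ on the segment $[x,x']$ satisfies $|\xi-y|\in[r/2,3r/2]$.

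The obstacle is that the segment may cross between the regions of Lemma~\ref{lem:AK-kernel-package}. To handle this, I first unify the derivative bounds into the single estimate
$$|\nabla\mathcal K_\lambda(\xi,y)|\lesssim \frac{1}{|\xi-y|\,\nu_\lambda(B(\xi,|\xi-y|))}.$$
This unified form is checked region by region exactly as in the size step: \eqref{eq:AK-local-conj-deriv} gives $x^{-2\lambda-1}r^{-2}$, and the far bounds give $y^{-2\lambda-3}$ or $x^{-2\lambda-3}$. These all equal $1/(r\,\nu_\lambda(B))$ up to constants.

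Since the three regions cover $\{x\ne y\}$ and the kernel is $C^1$ off the diagonal, the unified bound holds at every point of the segment. It remains to compare $\nu_\lambda(B(\xi,|\xi-y|))$ with $\nu_\lambda(B(x,r))$. Because $|\xi-x|\le r/2$ and $|\xi-y|\approx r$, the balls $B(\xi,|\xi-y|)$ and $B(x,r)$ are each contained in a fixed dilate of the other. Doubling from Lemma~\ref{lem:volume} then gives comparability; alternatively one can use $\xi+|\xi-y|\approx x+r$ directly. Integrating the derivative along the segment yields
$$|\mathcal K_\lambda(x,y)-\mathcal K_\lambda(x',y)|\lesssim \frac{|x-x'|}{r}\cdot\frac{1}{\nu_\lambda(B(x,r))}.$$
For the $y$-variable, $B(x,|x-y'|)$ is likewise comparable to $B(x,r)$.

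\textbf{$L^2$ boundedness.} Apply Proposition~\ref{prop:norm-identities} with $p=2$ and $w(x)=x^{2\lambda}$, so that $U=x^{2-2\lambda-1}x^{2\lambda}\cdot x^{-1}\cdot$ appropriately; concretely $U\,d\nu_\lambda=x^2w\,dx$. Choosing $w(x)=x^{2\lambda}$ gives $U\equiv 1$, since $x^{p-2\lambda-1}x^{2\lambda}=x^{1}$ at $p=2$. Then $\|\mathcal R_\lambda F\|_{L^2(d\nu_\lambda)}=\|R_\lambda f\|_{L^2(dm_\lambda)}$.

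Correcting the computation: $U=x^{1-2\lambda}w$, so $U\equiv1$ requires $w=x^{2\lambda-1}$. I will therefore instead use the identity $\|F\|_{L^2(\nu_\lambda)}^2=\int|f|^2x^{2\lambda-1}dx$. Boundedness of $R_\lambda$ on $L^2(x^{2\lambda-1}dx)$ follows from Andersen--Kerman, since $x^{2\lambda-1}\in A_{2,\lambda}$ by \eqref{eq:power-AK-intro}: we need $-3<2\lambda-1<4\lambda+1$, which is true for $\lambda>-1/2$. This qualitative $L^2$ bound is all that is needed, and it completes the proof.
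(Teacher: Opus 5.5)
Your proposal is correct and follows the paper's proof essentially step for step. It uses the same three-region size check and the same unified derivative bound $|\partial_x\mathcal K_\lambda|+|\partial_y\mathcal K_\lambda|\lesssim 1/(r\,\nu_\lambda(B(x,r)))$ with the mean value theorem to get $\delta=1$; you compare the balls by containment and doubling where the paper uses $x_\theta+r_\theta\approx x+r$, and both work. It also gets $L^2(\Rp,d\nu_\lambda)$ boundedness from the Andersen--Kerman theorem with $w_0(x)=x^{2\lambda-1}$, so that $U\equiv1$; the paper checks $[w_0]_{A_{2,\lambda}}=1$ directly rather than via the power-weight range.

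In a write-up, drop the abandoned $w=x^{2\lambda}$ computation and the phrase ``unweighted boundedness,'' since what you actually invoke is the weighted Andersen--Kerman theorem.
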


\begin{proof}
By Lemma~\ref{lem:volume}, for every $x>0$ and $r>0$,
\begin{align}\label{eq:V-r-xr-kernel-section}
        \nu_\lambda(B(x,r))\approx_\lambda r(x+r)^{2\lambda+1}.
\end{align}
Let $r=|x-y|$. If $x/2<y<2x$, then $\nu_\lambda(B(x,r))\approx_\lambda rx^{2\lambda+1}$, and
\eqref{eq:AK-local-conj-size} gives $|\mathcal K_\lambda(x,y)|\lesssim_\lambda \nu_\lambda(B(x,r))^{-1}$. If $y\ge2x$, then $r\approx y$
and $\nu_\lambda(B(x,r))\approx_\lambda y^{2\lambda+2}$, and \eqref{eq:AK-right-conj-size} gives the same conclusion. If $x\ge2y$, then $r\approx x$ and
$\nu_\lambda(B(x,r))\approx_\lambda x^{2\lambda+2}$, and \eqref{eq:AK-left-conj-size} gives \eqref{eq:CZ-size}.

The derivative estimates in Lemma~\ref{lem:AK-kernel-package} imply
\begin{align}\label{eq:global-derivative-Kc}
        |\partial_x\mathcal K_\lambda(x,y)|+|\partial_y\mathcal K_\lambda(x,y)| \le
        \frac{C_\lambda}{r\,\nu_\lambda(B(x,r))} , \qquad x\ne y.
\end{align}
Indeed, the three regions give respectively $x^{-2\lambda-1}r^{-2}$, $y^{-2\lambda-3}$, and $x^{-2\lambda-3}$, which are the right side
of \eqref{eq:global-derivative-Kc} by \eqref{eq:V-r-xr-kernel-section}.

Assume $|x-x'|\le r/2$. For $x_\theta=x+\theta(x'-x)$ and $r_\theta=|x_\theta-y|$, one has $\frac{r}{2}\le r_\theta\le\frac{3r}{2}$ and
\begin{equation}\label{eq:volume-stability-x}
        \nu_\lambda(B(x_\theta,r_\theta))
        \approx_\lambda
        \nu_\lambda(B(x,r)).
\end{equation}
Indeed, by \eqref{eq:V-r-xr-kernel-section},
$$
        \nu_\lambda(B(x_\theta,r_\theta))
        \approx_\lambda
        r_\theta(x_\theta+r_\theta)^{2\lambda+1}.
$$
We already know $r_\theta\approx r$.  Now we show
$
        x_\theta+r_\theta \approx x+r.
$

To see this, first note that
$$
        x_\theta+r_\theta
        \le x+{r\over2}+{3r\over2}
        =x+2r
        \le 2(x+r).
$$
For the lower bound, if $r\le x$, then
$x_\theta\ge x-r/2\ge x/2$, so
$$
        x_\theta+r_\theta\ge x/2\ge {1\over4}(x+r).
$$
If $r>x$, then $r_\theta\ge r/2$ and $x+r<2r$, so
$$
        x_\theta+r_\theta\ge r/2>{1\over4}(x+r).
$$
Thus $x_\theta+r_\theta\approx x+r$, and
\eqref{eq:volume-stability-x} follows by combining \eqref{eq:V-r-xr-kernel-section} and $r_\theta \approx r$.

Then, the mean value theorem and \eqref{eq:global-derivative-Kc} give
\begin{align*}
        |\mathcal K_\lambda(x,y)-\mathcal K_\lambda(x',y)| 
        &\le |x-x'|\sup_{0\le\theta\le1}|\partial_x\mathcal
        K_\lambda(x_\theta,y)| \\
        &\le C_\lambda\frac{|x-x'|}{r}\sup_{0\le\theta\le1}\frac{1}{\nu_\lambda(B(x_\theta,r_\theta))}
        \le C_\lambda\frac{|x-x'|}{r}\frac{1}{\nu_\lambda(B(x,r))}. \notag
\end{align*}
Thus \eqref{eq:CZ-smooth-x} holds with $\delta=1$. The proof of \eqref{eq:CZ-smooth-y} is
identical, with the center $x$ fixed and $y_\theta=y+\theta(y'-y)$.

For the $L^2(\Rp,d\nu_\lambda)$ normalization, Andersen--Kerman's theorem gives $L^2(\Rp,w\,dx)$ boundedness of $R_\lambda$ for every $w\in
A_{2,\lambda}$ in the sense of \eqref{eq:AK-characteristic-main}. Take
$$
        w_0(x)=x^{2\lambda-1}.
$$
The weight $w_0$ is admissible in the Andersen--Kerman sense even when $w_0\,dx$ is not locally finite at the origin, because
$$
        x^2w_0(x)=x^{2\lambda+1}, \qquad x^{4\lambda}w_0(x)^{-1}=x^{2\lambda+1}.
$$
Consequently $[w_0]_{A_{2,\lambda}}=1$. If $f=xF$, then
$$
        \|f\|_{L^2(\Rp,w_0\,dx)}=\|F\|_{L^2(\Rp,d\nu_\lambda)}, \qquad \|R_\lambda f\|_{L^2(\Rp,w_0\,dx)}= \|\mathcal R_\lambda F\|_{L^2(\Rp,d\nu_\lambda)}.
$$
Applying Andersen--Kerman's $p=2$ theorem with $w_0$ proves the $L^2(\Rp,d\nu_\lambda)$ boundedness of $\mathcal R_\lambda$.
\end{proof}

\section{Maximal functions and sparse domination}\label{sec:max-sparse}

\subsection{Maximal truncations}\label{sec:maxtrunc}

For $\eps>0$, define
$$
        \mathcal R_{\lambda,\eps}F(x) =\int_{|x-y|>\eps}\mathcal K_\lambda(x,y)F(y)\,d\nu_\lambda(y),
$$
and
$$
        \mathcal R_{\lambda,*}F(x) =\sup_{\eps>0}|\mathcal R_{\lambda,\eps}F(x)|.
$$
Let $M_\lambda$ denote the Hardy--Littlewood maximal operator associated to $\nu_\lambda$:
$$
        M_\lambda F(x) =\sup_{I\ni x,\, I\subset\Rp}\frac{1}{\nu_\lambda(I)}\int_I |F(y)|\,d\nu_\lambda(y).
$$

\begin{lemma}\label{lem:M-weak}
For every $F\in L^1(\Rp, d\nu_\lambda)$ and every $\alpha>0$,
$$
        \nu_\lambda(\{x\in\Rp:M_\lambda F(x)>\alpha\}) \le \frac{C_\lambda}{\alpha}\|F\|_{L^1(\Rp, d\nu_\lambda)}.
$$
\end{lemma}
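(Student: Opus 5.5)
The plan is to run the classical Vitali covering argument on the space of homogeneous type $(\Rp,|x-y|,d\nu_\lambda)$. By Lemma~\ref{lem:volume}, $\nu_\lambda$ is doubling; concretely, there is $D_\lambda$ with $\nu_\lambda(B(x,3r))\le D_\lambda\,\nu_\lambda(B(x,r))$ for all $x,r>0$, since $3r(x+3r)^{2\lambda+1}\lesssim_\lambda r(x+r)^{2\lambda+1}$ whether $2\lambda+1$ is positive or not (it is always positive here, as $\lambda>-1/2$). Since the maximal function is defined over intervals $I\subset\Rp$ containing $x$, each such $I=(a,b)$ is itself a ball $B(c,\rho)$ with $c=(a+b)/2$ and $\rho=(b-a)/2$, and $I\subset B(x,2\rho)$. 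So $M_\lambda$ is pointwise comparable to the centered ball maximal operator, but the covering argument can in fact be run directly with the intervals.

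Fix $\alpha>0$ and a compact set $K\subset E_\alpha:=\{M_\lambda F>\alpha\}$. For each $x\in K$, choose an interval $I_x\ni x$ with $\int_{I_x}|F|\,d\nu_\lambda>\alpha\,\nu_\lambda(I_x)$. I would first record that $E_\alpha$ is open, since an interval witnessing the bound at $x$ also witnesses it at every point of the open interval $I_x$; this also makes measurability automatic. Extract a finite subcover of $K$ and apply the finite Vitali lemma on the line: there are pairwise disjoint $I_1,\dots,I_N$ among the chosen intervals such that $K\subset\bigcup_j 3I_j$, where $3I_j$ denotes the concentric interval of triple length intersected with $\Rp$. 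Then
$$
\nu_\lambda(K)\le\sum_j\nu_\lambda(3I_j)\le D_\lambda\sum_j\nu_\lambda(I_j)\le\frac{D_\lambda}{\alpha}\sum_j\int_{I_j}|F|\,d\nu_\lambda\le\frac{D_\lambda}{\alpha}\|F\|_{L^1(\Rp,d\nu_\lambda)}.
$$
Taking the supremum over compact $K\subset E_\alpha$ and using inner regularity of the Radon measure $\nu_\lambda$ gives the claim.

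The only point requiring any care is the doubling constant for the intervals $3I_j\cap\Rp$. When the center $c$ of $I_j$ satisfies $c<3\rho$, the tripled interval is truncated at $0$, but it is still the ball $B(c,3\rho)$ in the sense of the paper. Lemma~\ref{lem:volume} then applies uniformly, giving $\nu_\lambda(B(c,3\rho))\approx_\lambda 3\rho(c+3\rho)^{2\lambda+1}\le 3^{2\lambda+2}\rho(c+\rho)^{2\lambda+1}$. No other obstacle arises. Alternatively, one can simply cite the standard weak $(1,1)$ theorem for the Hardy--Littlewood maximal operator on spaces of homogeneous type (Coifman--Weiss), which applies by Lemma~\ref{lem:volume}.
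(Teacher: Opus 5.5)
Your proposal is correct and follows the same route as the paper: choose witnessing intervals, extract a disjoint subfamily by a Vitali covering, enlarge by a fixed factor, and use the doubling property of $\nu_\lambda$ from Lemma~\ref{lem:volume}. The only difference is that you exhaust by compact sets and use the finite Vitali lemma (factor $3$), whereas the paper applies the infinite Vitali lemma directly (factor $5$). Your version is slightly more careful, since it avoids having to check that the chosen intervals have bounded length.
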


\begin{proof}
The covering argument for doubling intervals applies. Let $E_\alpha=\{x:M_\lambda F(x)>\alpha\}$.
For each $x\in E_\alpha$ choose an interval $I_x\ni x$ with
$$
        \int_{I_x}|F|\,d\nu_\lambda>\alpha\nu_\lambda(I_x).
$$
By the one-dimensional Vitali covering lemma, there is a pairwise disjoint subcollection $\{I_j\}$ such
that $E_\alpha\subset\bigcup_j 5I_j$. Since $\nu_\lambda$ is doubling,
$$
        \nu_\lambda(E_\alpha) \le\sum_j\nu_\lambda(5I_j) \lesssim_\lambda\sum_j\nu_\lambda(I_j) \le
        \frac{1}{\alpha}\sum_j\int_{I_j}|F|\,d\nu_\lambda \le \frac{1}{\alpha}\|F\|_{L^1(\Rp, d\nu_\lambda)}.
$$
The proof is complete.
\end{proof}

\begin{lemma}\label{lem:Tstar-weak}
There is a constant $C_\lambda$ such that for every bounded compactly supported $F$ and every $\alpha>0$,
\begin{align}\label{eq:Tstar-weak}
        \nu_\lambda(\{x\in\Rp:\mathcal R_{\lambda,*}F(x)>\alpha\}) \le \frac{C_\lambda}{\alpha}\|F\|_{L^1(\Rp, d\nu_\lambda)}.
\end{align}
Moreover $\mathcal R_{\lambda,*}$ is bounded on $L^2(\Rp, d\nu_\lambda)$.
\end{lemma}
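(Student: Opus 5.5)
The plan is the standard Calder\'on--Zygmund route on the space of homogeneous type $(\Rp,|x-y|,d\nu_\lambda)$, using only Proposition~\ref{prop:AK-CZ-kernel} and Lemma~\ref{lem:M-weak}. First, from the $L^2(\Rp,d\nu_\lambda)$ boundedness of $\mathcal R_\lambda$ and the size and smoothness estimates \eqref{eq:CZ-size}--\eqref{eq:CZ-smooth-y}, I will derive the weak type $(1,1)$ bound for $\mathcal R_\lambda$ itself. I will run the Calder\'on--Zygmund decomposition of $F$ at height $\alpha$ relative to $\nu_\lambda$, using Lemma~\ref{lem:M-weak} and the Vitali/Whitney selection of intervals. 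This gives $F=g+\sum_j b_j$ with $\|g\|_\infty\lesssim\alpha$, $\int b_j\,d\nu_\lambda=0$, and $\operatorname{supp} b_j\subset I_j$. The good part is handled by $L^2$ and the bad parts off $\bigcup 2I_j$ by \eqref{eq:CZ-smooth-y}. The key estimate there is
$$\int_{|x-y_j|>2|y-y_j|}\frac{|y-y_j|^\delta}{|x-y_j|^\delta\,\nu_\lambda(B(x,|x-y_j|))}\,d\nu_\lambda(x)\lesssim_\lambda 1,$$
which follows by summing over dyadic annuli and using doubling (Lemma~\ref{lem:volume}). I also need $\nu_\lambda(B(x,|x-y|))\approx\nu_\lambda(B(y,|x-y|))$, which is immediate from \eqref{eq:volume} since $x+r\approx y+r$ when $r=|x-y|$.

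Next I will prove the Cotlar inequality
$$\mathcal R_{\lambda,*}F(x)\lesssim_\lambda M_\lambda(\mathcal R_\lambda F)(x)+M_\lambda F(x),$$
or better its Kolmogorov variant $\mathcal R_{\lambda,*}F(x)\lesssim M_{\lambda}(|\mathcal R_\lambda F|^{s})(x)^{1/s}+M_\lambda F(x)$ with $0<s<1$. The procedure fixes $x$ and $\eps$, sets $I=B(x,\eps/2)$, and splits $F=F\one_{B(x,\eps)}+F\one_{B(x,\eps)^c}=F_1+F_2$. For $x'\in I$, \eqref{eq:CZ-smooth-x} gives $|\mathcal R_\lambda F_2(x')-\mathcal R_{\lambda,\eps}F(x)|\lesssim M_\lambda F(x)$, again by summing dyadic annuli. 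Then $|\mathcal R_{\lambda,\eps}F(x)|\le |\mathcal R_\lambda F(x')|+|\mathcal R_\lambda F_1(x')|+CM_\lambda F(x)$. I will average the $s$-th power over $x'\in I$ and control the $F_1$ term by the weak $(1,1)$ bound via Kolmogorov's inequality:
$$\Big(\frac{1}{\nu_\lambda(I)}\int_I|\mathcal R_\lambda F_1|^s\,d\nu_\lambda\Big)^{1/s}\lesssim\frac{1}{\nu_\lambda(I)}\|F_1\|_{L^1(\nu_\lambda)}\lesssim M_\lambda F(x),$$
where the last step uses doubling $\nu_\lambda(B(x,\eps))\lesssim\nu_\lambda(I)$. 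One technical point is that $I=B(x,\eps/2)\cap\Rp$ may be truncated near $0$, but Lemma~\ref{lem:volume} already handles such balls, so the doubling constants are uniform. For bounded compactly supported $F$, $\mathcal R_\lambda F$ is defined a.e. as the limit of the truncations, and these truncations are finite, so the pointwise inequality makes sense.

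Finally, the weak $(1,1)$ bound for $M_\lambda(|\cdot|^s)^{1/s}$ composed with a weak-$L^1$ function holds, since $M_\lambda$ is weak $(1/s,1/s)$ as an operator on $|G|^s\in L^{1/s,\infty}$, or more simply is bounded on $L^{r,\infty}$ for $r=1/s>1$. Combined with Lemma~\ref{lem:M-weak}, this gives \eqref{eq:Tstar-weak}. The $L^2$ bound follows from the same Cotlar inequality with $s=1$: both $M_\lambda$ and $\mathcal R_\lambda$ are $L^2(\nu_\lambda)$ bounded, the former by interpolating Lemma~\ref{lem:M-weak} with $L^\infty$. I expect the main obstacle to be the weak $(1,1)$ step for $\mathcal R_\lambda$, specifically making the Hörmander-type integral estimate uniform near the origin where balls are truncated. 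The equivalence $\nu_\lambda(B(x,r))\approx r(x+r)^{2\lambda+1}$, together with $\lambda>-1/2$, should make this routine.
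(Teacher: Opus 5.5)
Your proposal is correct and follows the same route as the paper. The paper simply invokes the Coifman--Weiss maximal-truncation theorem for the $L^2(\Rp,d\nu_\lambda)$-bounded Calder\'on--Zygmund operator $\mathcal R_\lambda$ supplied by Proposition~\ref{prop:AK-CZ-kernel}, and you are writing out that theorem's standard proof (Calder\'on--Zygmund decomposition for weak $(1,1)$, then Cotlar's inequality). One small correction: your Kolmogorov averaging proves the Cotlar inequality only for $0<s<1$, not for $s=1$, but that already gives the $L^2$ bound, since $\|M_\lambda(|G|^s)^{1/s}\|_{L^2(\nu_\lambda)}=\|M_\lambda(|G|^s)\|_{L^{2/s}(\nu_\lambda)}^{1/s}\lesssim\|G\|_{L^2(\nu_\lambda)}$ by the $L^{2/s}$ boundedness of $M_\lambda$.
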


\begin{proof}
By Proposition~\ref{prop:AK-CZ-kernel}, $\mathcal R_\lambda$ is an $L^2(\Rp, d\nu_\lambda)$-bounded Calder\'on--Zygmund operator
on the space of homogeneous type $(\Rp,|x-y|,d\nu_\lambda)$. The maximal-truncation theorem for Calder\'on--Zygmund
operators on spaces of homogeneous type gives both the $L^2$ boundedness of $\mathcal R_{\lambda,*}$ and the weak $(1,1)$
estimate \eqref{eq:Tstar-weak}, see for instance, Coifman--Weiss \cite{CW77}.
\end{proof}

\subsection{Sparse domination for the conjugated transform}\label{sec:sparse}

\begin{definition}
A collection $\mathcal S$ of intervals in $\Rp$ is sparse if for each $I\in\mathcal S$ there
exists a measurable set $E_I\subset I$ such that
$$
        \nu_\lambda(E_I)\ge \frac{1}{2}\nu_\lambda(I)
$$
and the sets $\{E_I:I\in\mathcal S\}$ are pairwise disjoint.
\end{definition}

The required form of sparse domination is the following bilinear estimate for Calder\'on--Zygmund operators on spaces of homogeneous type.

\begin{theorem}\label{thm:sparse-domination}
Let $F,G$ be bounded functions with compact support in $\Rp$. Then there is a sparse collection $\mathcal S$ of intervals such that
\begin{align}\label{eq:sparse-domination}
        \left| \int_{\Rp}\mathcal R_\lambda F(x)G(x)\,d\nu_\lambda(x) \right| \le C_\lambda
        \sum_{I\in\mathcal S} \langle |F|\rangle_I\langle |G|\rangle_I\nu_\lambda(I).
\end{align}
\end{theorem}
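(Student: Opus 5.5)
We prove Theorem~\ref{thm:sparse-domination} with the standard Lerner/Lacey local grand-maximal-truncation argument, run on the space of homogeneous type $(\Rp,|x-y|,d\nu_\lambda)$. The only inputs needed are the weak $(1,1)$ bounds for $M_\lambda$ and $\mathcal R_{\lambda,*}$ (Lemmas~\ref{lem:M-weak} and \ref{lem:Tstar-weak}), together with the Calder\'on--Zygmund size and smoothness estimates of Proposition~\ref{prop:AK-CZ-kernel}.

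\textbf{Dyadic structure.} Since $\Rp$ with $\nu_\lambda$ is doubling (Lemma~\ref{lem:volume}), we fix finitely many (three suffice in one dimension) adjacent dyadic systems $\mathcal D^1,\mathcal D^2,\mathcal D^3$ of intervals in $\Rp$. These have the property that every interval $I\subset\Rp$ is contained in some $Q\in\bigcup_j\mathcal D^j$ with $\ell(Q)\lesssim\ell(I)$. By doubling we then also have $\nu_\lambda(Q)\lesssim_\lambda\nu_\lambda(I)$. The endpoint $0$ causes no trouble, because intervals of the form $(0,a)$ are legitimate balls $B(a/2,a/2)$.

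\textbf{Local grand maximal truncation.} For an interval $Q_0$ define
$$
\mathcal M_{Q_0}F(x)=\sup_{I\ni x,\,I\subset Q_0}\ \operatorname*{ess\,sup}_{\xi\in I}\bigl|\mathcal R_\lambda(F\one_{Q_0\setminus 3I})(\xi)\bigr|.
$$
The key lemma is the pointwise bound
$$
\mathcal M_{Q_0}F(x)\lesssim_\lambda M_\lambda F(x)+\mathcal R_{\lambda,*}F(x)
$$
for $x\in Q_0$, with $F$ supported in $3Q_0$. To prove it, fix $\xi\in I\ni x$ and compare $\mathcal R_\lambda(F\one_{\Rp\setminus 3I})(\xi)$ with the truncation $\mathcal R_{\lambda,\ell(I)}F(x)$. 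The difference splits into two pieces:
\begin{itemize}
\item[(i)] a kernel-smoothness term $\int_{\Rp\setminus3I}|\mathcal K_\lambda(\xi,y)-\mathcal K_\lambda(x,y)||F|\,d\nu_\lambda$, which we bound by $M_\lambda F(x)$ by summing \eqref{eq:CZ-smooth-x} over annuli $2^{k}I$;
\item[(ii)] a term over the bounded annulus $3I\setminus B(x,\ell(I))$, which the size estimate \eqref{eq:CZ-size} controls by $M_\lambda F(x)$.
\end{itemize}
Consequently $\mathcal M_{Q_0}$ is weak $(1,1)$ with respect to $\nu_\lambda$.

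\textbf{Stopping-time construction.} We follow Lerner's argument: by Lemmas~\ref{lem:M-weak} and \ref{lem:Tstar-weak}, choose $C$ large so that
$$
E=\{x\in Q_0:\ \max(M_\lambda(F\one_{3Q_0}),\,\mathcal M_{Q_0}(F\one_{3Q_0}))>C\langle|F|\rangle_{3Q_0}\}
$$
satisfies $\nu_\lambda(E)\le\frac12\nu_\lambda(Q_0)$. Taking the maximal dyadic subintervals $P_j$ of $Q_0$ with $\nu_\lambda(P_j\cap E)>\frac12\nu_\lambda(P_j)$, we obtain
$$
|\mathcal R_\lambda(F\one_{3Q_0})|\one_{Q_0}\le C'\langle|F|\rangle_{3Q_0}\one_{Q_0}+\sum_j|\mathcal R_\lambda(F\one_{3P_j})|\one_{P_j},
$$
with $\sum_j\nu_\lambda(P_j)\le\frac12\nu_\lambda(Q_0)$. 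Here the Lebesgue differentiation theorem for the doubling measure $\nu_\lambda$ handles $E$ up to a null set. Iterating yields a sparse family, and we set $E_Q=Q\setminus\bigcup_j P_j$.

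\textbf{Globalization.} For bounded compactly supported $F$, we cover $\operatorname{supp}F\cup\operatorname{supp}G$ by a single large dyadic interval $Q_0$ (for instance $(0,a)$ in a suitable system), or by a partition of $\Rp$ into intervals $Q_0^k$ whose triples cover. The contribution of $\mathcal R_\lambda F$ away from $3Q_0$ is eliminated by using $G$'s compact support. Pairing with $G$ gives $\sum_Q\langle|F|\rangle_{3Q}\int_Q|G|\,d\nu_\lambda$. Finally we replace $3Q$ by a dyadic interval from one of the three systems, using the first step; the resulting collection is still sparse up to a doubling constant, and we regroup the systems to get \eqref{eq:sparse-domination}.

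\textbf{Main obstacle.} The main obstacle is the pointwise estimate for $\mathcal M_{Q_0}$ near the origin, where the balls $3I$ are truncated by $\Rp$ and the volume $\nu_\lambda(B(x,r))$ changes from $rx^{2\lambda+1}$ to $r^{2\lambda+2}$. I would handle this purely through the comparability statement \eqref{eq:volume-stability-x} and doubling, never using Lebesgue measure, so that the annulus sums telescope exactly as in the Euclidean case.
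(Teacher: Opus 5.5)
Your proposal is correct and is essentially the paper's proof. The paper simply invokes the Lerner/Lorist sparse domination theorem for Calder\'on--Zygmund operators on spaces of homogeneous type, using exactly the inputs you list: doubling of $\nu_\lambda$, the kernel bounds of Proposition~\ref{prop:AK-CZ-kernel}, and the weak $(1,1)$ bounds for $M_\lambda$ and $\mathcal R_{\lambda,*}$. You instead unpack that theorem's grand-maximal-truncation proof, and only routine bookkeeping needs repair: the grand maximal truncation should use $F\one_{3Q_0\setminus 3I}$ rather than $F\one_{Q_0\setminus 3I}$; the smoothness bound \eqref{eq:CZ-smooth-x} applies only where $|x-y|\ge 2\ell(I)$, so the rest of $\Rp\setminus 3I$ must be handled by the size bound; and the stopping thresholds must be tuned to the doubling constant, since $\nu_\lambda(E)\le\frac12\nu_\lambda(Q_0)$ with threshold $\frac12$ only gives $\sum_j\nu_\lambda(P_j)\le\nu_\lambda(Q_0)$ and does not guarantee $\nu_\lambda(P_j\setminus E)>0$.
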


\begin{proof}
By Proposition~\ref{prop:AK-CZ-kernel}, $\mathcal R_\lambda$ is an $L^2(\Rp, d\nu_\lambda)$-bounded Calder\'on--Zygmund operator on the
space of homogeneous type $(\Rp,|x-y|,d\nu_\lambda)$. Its kernel satisfies the size and H\"older regularity estimates
\eqref{eq:CZ-size}--\eqref{eq:CZ-smooth-y}, and Lemma~\ref{lem:Tstar-weak} gives the weak $(1,1)$ bound for the maximal truncation.
The sparse domination theorem for Calder\'on--Zygmund operators on spaces of homogeneous type then yields
\eqref{eq:sparse-domination}, see Lerner \cite{Ler13} for the Euclidean principle and Lorist \cite{Lorist21} for a
space-of-homogeneous-type formulation. The constants depend only on the doubling constant of $\nu_\lambda$, the kernel constants, and
the $L^2(\Rp, d\nu_\lambda)$ operator norm of $\mathcal R_\lambda$, in particular they depend only on $\lambda$.
\end{proof}

\subsection{Sharp weighted estimates for sparse forms}\label{sec:sparse-weighted}

The weighted estimate needed below is the following positive sparse estimate.

\begin{theorem}\label{thm:weighted-sparse}
Let $1<p<\infty$, let $U\in A_p(d\nu_\lambda)$, and put
$
        \sigma=U^{-1/(p-1)}.
$
If $\mathcal S$ is sparse, then, for all bounded compactly supported $F,G$,
\begin{align}\label{eq:weighted-sparse}
        \sum_{I\in\mathcal S} \langle |F|\rangle_I\langle |G|\rangle_I\nu_\lambda(I) &\le
        C_{p,\lambda}[U]_{A_p(d\nu_\lambda)}^{\gamma_p} \|F\|_{L^p(\Rp,U\,d\nu_\lambda)} \|G\|_{L^{p'}(\Rp,\sigma\,d\nu_\lambda)},
\end{align}
where
$$
        \gamma_p=\max\left\{1,\frac{1}{p-1}\right\}.
$$
\end{theorem}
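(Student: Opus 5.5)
The plan is the standard weighted sparse argument of Moen and Lerner, adapted to the doubling measure $\nu_\lambda$. I would first treat $p=2$ (or more generally $p\ge 2$) and then obtain $p<2$ by duality. Write $\langle h\rangle_I^{\mu}$ for averages of $h$ with respect to a measure $\mu$. Substitute $F=f\sigma$ and $G=gU$, so that
\[
\|F\|_{L^p(U\,d\nu_\lambda)}=\|f\|_{L^p(\sigma\,d\nu_\lambda)},\qquad
\|G\|_{L^{p'}(\sigma\,d\nu_\lambda)}=\|g\|_{L^{p'}(U\,d\nu_\lambda)}.
\]
Each term of the sparse form then becomes
\[
\langle |f|\sigma\rangle_I\langle |g|U\rangle_I\nu_\lambda(I)
=\langle |f|\rangle_I^{\sigma}\langle |g|\rangle_I^{U}\,\frac{\sigma(I)U(I)}{\nu_\lambda(I)^2}\,\nu_\lambda(I),
\]
where $\sigma(I)=\int_I\sigma\,d\nu_\lambda$ and $U(I)=\int_I U\,d\nu_\lambda$. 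For $p=2$ the factor $\sigma(I)U(I)/\nu_\lambda(I)^2$ is at most $[U]_{A_2(d\nu_\lambda)}$.

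Next I would use sparseness, $\nu_\lambda(I)\le 2\nu_\lambda(E_I)$, together with H\"older's inequality on $E_I$:
\[
\nu_\lambda(E_I)\le \Big(\int_{E_I}\sigma\,d\nu_\lambda\Big)^{1/2}\Big(\int_{E_I}U\,d\nu_\lambda\Big)^{1/2}.
\]
Bounding $\langle |f|\rangle_I^{\sigma}$ by the weighted maximal function $M^{\mathcal D}_{\sigma}f$ on $E_I$, and similarly for $g$, and then applying Cauchy--Schwarz over the disjoint sets $E_I$, gives
\[
\sum_{I\in\mathcal S}\cdots\;\lesssim\;[U]_{A_2(d\nu_\lambda)}\,\|M_\sigma f\|_{L^2(\sigma\,d\nu_\lambda)}\,\|M_U g\|_{L^2(U\,d\nu_\lambda)}.
\]
The weighted maximal operators have to be bounded independently of the weight. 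Here I would use dyadic systems adapted to $(\Rp,|x-y|,\nu_\lambda)$: either reduce $\mathcal S$ to finitely many adapted dyadic grids (three-lattice theorem), or note that the Lorist sparse family can be taken inside Christ/Hyt\"onen--Kairema dyadic cubes, which in one dimension are intervals. The dyadic maximal function with respect to any locally finite measure is bounded on $L^q$, $q>1$, with norm $q'$ by Doob's inequality, with no doubling required for $\sigma$ or $U$. This gives the case $p=2$.

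For general $p$ I would follow Moen's computation. The term factor is now
\[
\frac{U(I)\,\sigma(I)^{p-1}}{\nu_\lambda(I)^{p}}\;\frac{\nu_\lambda(I)}{U(I)^{1/p}\sigma(I)^{1/p'}}\,\sigma(I)^{1/p'-(p-1)}\cdots
\]
and I would split the $A_p$ power as in the Euclidean case, bounding the leftover factor by $\nu_\lambda(E_I)$ through H\"older's inequality with exponents $p,p'$ applied to $\int_{E_I}U^{1/p}\sigma^{1/p'}\,d\nu_\lambda$. Concretely:

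for $p\ge2$, bound $(\sigma(I)/\nu_\lambda(I))^{p-1}(U(I)/\nu_\lambda(I))\le[U]$ once, and control the remaining factor $\sigma(I)^{2-p}$ type terms via the monotonicity $\sigma(I)\ge$ the corresponding quantity on $E_I$, which yields $[U]^{1}$;

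for $p<2$, use the symmetry $[\sigma]_{A_{p'}}=[U]^{1/(p-1)}$ and swap the roles of $(F,U)$ and $(G,\sigma)$, which yields $[U]^{1/(p-1)}$.

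I expect the main obstacle to be this exponent bookkeeping for $p\neq2$. The fallback is to cite the abstract result that the sparse bound $[U]_{A_p}^{\max(1,1/(p-1))}$ holds in any space of homogeneous type (Moen; Lerner; Hyt\"onen--Kairema dyadic cubes). Its proof uses only the doubling of $\nu_\lambda$, through the dyadic structure, and the universal dyadic Doob bound, so the constant depends only on $p$ and the doubling constant of $\nu_\lambda$, hence only on $p$ and $\lambda$.
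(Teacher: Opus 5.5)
Your proposal is correct, but it takes a different route from the paper. The paper does no Moen-type computation. It quotes K.~Li's mixed $A_p$--$A_\infty$ bound for sparse bilinear forms, which gives the factor $[U]_{A_p(d\nu_\lambda)}^{1/p}\bigl([U]_{A_\infty(d\nu_\lambda)}^{1/p'}+[\sigma]_{A_\infty(d\nu_\lambda)}^{1/p}\bigr)$ with Fujii--Wilson characteristics. It then inserts $[U]_{A_\infty}\lesssim[U]_{A_p}$ and $[\sigma]_{A_\infty}\lesssim[\sigma]_{A_{p'}}=[U]_{A_p}^{1/(p-1)}$, so $\gamma_p$ comes out of the arithmetic $\frac1p+\frac1{p'}=1$ and $\frac1p+\frac1{p(p-1)}=\frac1{p-1}$.

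That route is short and yields the stronger mixed bound as a by-product. However, it is a black box: it needs Li's theorem and the $A_\infty$ comparisons transplanted to $(\Rp,|x-y|,\nu_\lambda)$. Your route is self-contained and uses no reverse H\"older or $A_\infty$ theory. It needs one application of the $A_p$ condition, sparseness plus H\"older on $E_I$ via $U^{1/p}\sigma^{1/p'}=1$, the monotonicity $\sigma(I)\ge\sigma(E_I)$ for $p\ge2$, and the symmetry $(F,U,p)\leftrightarrow(G,\sigma,p')$ for $p<2$.

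Your displayed factor for general $p$ is garbled. The clean bookkeeping for $p\ge2$ is
\[
\frac{U(I)\,\sigma(I)}{\nu_\lambda(I)}=\frac{U(I)\,\sigma(I)^{p-1}}{\nu_\lambda(I)^p}\,\nu_\lambda(I)^{p-1}\sigma(I)^{2-p}\le 2^{p-1}[U]_{A_p(d\nu_\lambda)}\,\sigma(E_I)^{1/p}\,U(E_I)^{1/p'} .
\]
Here $(p-1)/p'+(2-p)=1/p$. H\"older in the sum over $I$, using the disjointness of the $E_I$, then gives the product of the $L^p(\sigma\,d\nu_\lambda)$ and $L^{p'}(U\,d\nu_\lambda)$ norms of the weighted maximal functions.

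You do not need dyadic grids, three-lattice reductions, or the specific Lorist family. The statement is for an arbitrary sparse family of intervals. In one dimension, the uncentered interval maximal operator with respect to any locally finite measure $\mu$ is of weak type $(1,1)$ with constant $2$, since finite families of intervals admit subcovers of overlap at most two. It is therefore bounded on $L^q(\mu)$, $q>1$, with a constant depending only on $q$. Apply this with $\mu=\sigma\,d\nu_\lambda$ and $\mu=U\,d\nu_\lambda$, both locally finite because $U\in A_p(d\nu_\lambda)$. This handles every sparse $\mathcal S$ directly.

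By contrast, your dyadic reduction has a cost. Enlarging intervals to adapted cubes requires the Carleson-to-sparse conversion. Restricting to the Lorist family covers only the sparse collections produced in Theorem~\ref{thm:sparse-domination}, not all sparse $\mathcal S$ as the statement asserts.
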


\begin{proof}
Let $M_\lambda$ denote the Hardy--Littlewood maximal operator relative to
$\nu_\lambda$. Let $[V]_{A_\infty(d\nu_\lambda)}$ be the Fujii--Wilson characteristic
\[
        [V]_{A_\infty(d\nu_\lambda)}
        :=\sup_I\frac{1}{V(I)}\int_I M_\lambda(V\one_I)\,d\nu_\lambda .
\]
We use the mixed $A_p$--$A_\infty$ estimate for sparse bilinear forms,
\cite[Theorem~1.6]{Li17}. Applied with $p_0=1$, $q_0=\infty$, $p=q$,
$w=U$, and $\sigma=U^{-1/(p-1)}$, this theorem gives

\begin{align}\label{eq:mixed-sparse-bound}
        \sum_{I\in\mathcal S} \langle |F|\rangle_I\langle |G|\rangle_I\nu_\lambda(I) &\le C_{p,\lambda}[U]_{A_p(d\nu_\lambda)}^{1/p}
        \bigl([U]_{A_\infty(d\nu_\lambda)}^{1/p'} +[\sigma]_{A_\infty(d\nu_\lambda)}^{1/p}\bigr) \ \|F\|_{L^p(\Rp,U\,d\nu_\lambda)} \|G\|_{L^{p'}(\Rp,\sigma\,d\nu_\lambda)}.
\end{align}

Indeed, in the notation of \cite[Theorem~1.6]{Li17}, the auxiliary weights are
$u=U^{1/(1-p)}=\sigma$, $v=\sigma^{1/(1-p')}=U$, and the corresponding
Muckenhoupt exponent is $r=p$.
The standard comparisons for the Fujii--Wilson characteristic on spaces of
homogeneous type give
$$
        [U]_{A_\infty(d\nu_\lambda)}\lesssim_{p,\lambda} [U]_{A_p(d\nu_\lambda)}, \qquad
        [\sigma]_{A_\infty(d\nu_\lambda)}\lesssim_{p,\lambda} [\sigma]_{A_{p'}(d\nu_\lambda)} =[U]_{A_p(d\nu_\lambda)}^{1/(p-1)}.
$$
Substituting these two estimates into \eqref{eq:mixed-sparse-bound} gives the powers
$$
        \frac1p+\frac1{p'}=1, \qquad \frac1p+\frac1{p(p-1)}=\frac1{p-1}.
$$
Thus the exponent is $\max\{1,1/(p-1)\}=\gamma_p$, which proves \eqref{eq:weighted-sparse}.
\end{proof}

For commutators, the sharp one-weight power is obtained by the Cauchy integral method \cite{CRW76}. The required perturbation lemma is stated next.

\begin{lemma}\label{lem:exp-perturb}
Let $U\in A_p(d\nu_\lambda)$ and let $b\in\BMO_\lambda$ be real-valued. There are constants
$c_{p,\lambda},C_{p,\lambda}>0$ such that the following holds: if $\|b\|_{\BMO_\lambda}>0$ and
$$
        |z|\le \frac{c_{p,\lambda}}{\|b\|_{\BMO_\lambda}[U]_{A_p(d\nu_\lambda)}^{\gamma_p}},
$$
then the weight
$
        U_z=e^{p\operatorname{Re}z\,b}U
$
belongs to $A_p(d\nu_\lambda)$ and
\begin{align}\label{eq:perturbed-Ap}
        [U_z]_{A_p(d\nu_\lambda)} \le C_{p,\lambda}[U]_{A_p(d\nu_\lambda)}.
\end{align}
If $\|b\|_{\BMO_\lambda}=0$, then \eqref{eq:perturbed-Ap} also holds for all $z\in\mathbb C$.
\end{lemma}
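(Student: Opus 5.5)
The plan is to prove the lemma by the standard route, combining the John--Nirenberg inequality for $\BMO_\lambda$ with the sharp reverse H\"older inequality for $A_\infty(d\nu_\lambda)$ weights. The case $\|b\|_{\BMO_\lambda}=0$ is trivial: then $b$ is a.e.\ constant, since intervals exhaust $\Rp$ and $\nu_\lambda$ is doubling. So $U_z$ is a constant multiple of $U$ and $[U_z]_{A_p(d\nu_\lambda)}=[U]_{A_p(d\nu_\lambda)}$. For the main case, write $t=p\operatorname{Re}z$, so that $U_z=e^{tb}U$ and $\sigma_z:=U_z^{-1/(p-1)}=e^{-tb/(p-1)}\sigma$. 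Fix an interval $I$. Since $e^{tb_I^{\nu_\lambda}}\cdot\bigl(e^{-tb_I^{\nu_\lambda}/(p-1)}\bigr)^{p-1}=1$, we may replace $b$ by $b-b_I^{\nu_\lambda}$ in both averages without changing the product defining $[U_z]_{A_p(d\nu_\lambda)}$ on $I$.

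The two ingredients are as follows. First, John--Nirenberg on the doubling space $(\Rp,|x-y|,d\nu_\lambda)$ (Coifman--Weiss, via the Calder\'on--Zygmund decomposition relative to $\nu_\lambda$) gives constants $c_0,C_0$ depending only on $\lambda$ such that
$$
\frac{1}{\nu_\lambda(I)}\int_I e^{s|b-b_I^{\nu_\lambda}|}\,d\nu_\lambda\le C_0 \qquad\text{whenever } |s|\,\|b\|_{\BMO_\lambda}\le c_0 .
$$
Second, the sharp reverse H\"older inequality on spaces of homogeneous type (Hyt\"onen--P\'erez--Rela) applies to any $V\in A_\infty(d\nu_\lambda)$: with $r_V=1+\frac{1}{\tau_\lambda[V]_{A_\infty(d\nu_\lambda)}}$ we have $\langle V^{r_V}\rangle_I^{1/r_V}\le 2\langle V\rangle_{\widetilde I}$, where $\widetilde I$ is a fixed dilate of $I$. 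On the real line with intervals, one can instead run the argument directly to get $\widetilde I=I$. I expect this to be the main technical point to check in the present setting. If only the dilated version is available, the enlargement is harmless: the doubling of $U\,d\nu_\lambda$ and $\sigma\,d\nu_\lambda$ (with constant controlled by powers of $[U]_{A_p}$) cannot be used naively, so I would rather prove the reverse H\"older on the same interval by the usual local Calder\'on--Zygmund/dyadic argument in $\nu_\lambda$.

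Now apply H\"older with exponents $r_U,r_U'$:
$$
\langle U e^{t(b-b_I)}\rangle_I\le \langle U^{r_U}\rangle_I^{1/r_U}\langle e^{tr_U'(b-b_I)}\rangle_I^{1/r_U'}\le 2C_0\langle U\rangle_I,
$$
provided $|t|\,r_U'\,\|b\|_{\BMO_\lambda}\le c_0$. Since $r_U'\approx[U]_{A_\infty(d\nu_\lambda)}\lesssim[U]_{A_p(d\nu_\lambda)}$ by the comparison already used in the proof of Theorem~\ref{thm:weighted-sparse}, this holds if $|z|\le c/(\|b\|_{\BMO_\lambda}[U]_{A_p(d\nu_\lambda)})$. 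Similarly, $\langle \sigma e^{-t(b-b_I)/(p-1)}\rangle_I\le 2C_0\langle\sigma\rangle_I$ provided $\frac{|t|}{p-1}r_\sigma'\|b\|_{\BMO_\lambda}\le c_0$. Using $r_\sigma'\approx[\sigma]_{A_\infty}\lesssim[U]_{A_p(d\nu_\lambda)}^{1/(p-1)}$, this holds if $|z|\le c/(\|b\|_{\BMO_\lambda}[U]_{A_p(d\nu_\lambda)}^{1/(p-1)})$.

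Both conditions follow from $|z|\le c_{p,\lambda}/(\|b\|_{\BMO_\lambda}[U]^{\gamma_p}_{A_p(d\nu_\lambda)})$, because $[U]_{A_p}\ge1$ and $\gamma_p=\max\{1,1/(p-1)\}$. Multiplying the two averages, with the second raised to the power $p-1$, gives
$$
\langle U_z\rangle_I\langle\sigma_z\rangle_I^{p-1}\le (2C_0)^{p}\langle U\rangle_I\langle\sigma\rangle_I^{p-1}.
$$
Taking the supremum over $I$ yields \eqref{eq:perturbed-Ap} with $C_{p,\lambda}=(2C_0)^p$. In particular $U_z\in A_p(d\nu_\lambda)$. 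The only place where the quantitative exponent $\gamma_p$ enters is the size of the reverse H\"older exponents, which is exactly why the threshold on $|z|$ carries $[U]^{\gamma_p}$.
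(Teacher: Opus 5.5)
Your proof is correct and follows essentially the same route as the paper: remove $b_I$ by the scalar cancellation, apply the sharp reverse H\"older inequality to $U$ and $\sigma$ with exponents governed by $[U]_{A_p(d\nu_\lambda)}$ and $[U]_{A_p(d\nu_\lambda)}^{1/(p-1)}$, and combine this with John--Nirenberg through H\"older; the paper just uses one common exponent $1+\eta$ with $\eta\approx[U]_{A_p(d\nu_\lambda)}^{-\gamma_p}$. Your concern about the dilated reverse H\"older inequality is unnecessary: if both averages over $I$ are bounded by averages over the same enlarged interval $\widetilde I$, the product is still at most $(2C_0)^p[U]_{A_p(d\nu_\lambda)}$, so no doubling of $U\,d\nu_\lambda$ is needed.
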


\begin{proof}
Set
$
        \sigma=U^{-1/(p-1)} .
$
If $\|b\|_{\BMO_\lambda}=0$, then $b$ is constant a.e. Write $b=c$. Then, for every interval $I\subset\Rp$,
\[
\begin{aligned}
        &\left(\frac{1}{\nu_\lambda(I)}
        \int_I e^{p\operatorname{Re}z\,b}U\,d\nu_\lambda\right)
        \left(\frac{1}{\nu_\lambda(I)}
        \int_I e^{-p'\operatorname{Re}z\,b}\sigma\,d\nu_\lambda\right)^{p-1}  \\
        &\quad =
        e^{p\operatorname{Re}z\,c}
        \left(e^{-p'\operatorname{Re}z\,c}\right)^{p-1}
        \left(\frac{1}{\nu_\lambda(I)}
        \int_I U\,d\nu_\lambda\right)
        \left(\frac{1}{\nu_\lambda(I)}
        \int_I \sigma\,d\nu_\lambda\right)^{p-1}.
\end{aligned}
\]
Since $p'(p-1)=p$, the scalar factor satisfies
\[
        e^{p\operatorname{Re}z\,c}
        \left(e^{-p'\operatorname{Re}z\,c}\right)^{p-1}
        =
        e^{p\operatorname{Re}z\,c}
        e^{-p\operatorname{Re}z\,c}
        =1 .
\]
Hence
$
        [U_z]_{A_p(d\nu_\lambda)}
        =
        [U]_{A_p(d\nu_\lambda)}
$
for all $z\in\mathbb C$. This proves the case $\|b\|_{\BMO_\lambda}=0$.

Assume now that $\|b\|_{\BMO_\lambda}>0$. Fix an interval $I$. Write
$
        b=b_I+(b-b_I).
$
Then
$$
        e^{p\operatorname{Re}z\,b}
        =
        e^{p\operatorname{Re}z\,b_I}
        e^{p\operatorname{Re}z\,(b-b_I)}
\quad{\rm
and
}\quad
        e^{-p'\operatorname{Re}z\,b}
        =
        e^{-p'\operatorname{Re}z\,b_I}
        e^{-p'\operatorname{Re}z\,(b-b_I)} .
$$
Therefore
\begin{align*}
        &\left(\frac{1}{\nu_\lambda(I)}
        \int_I e^{p\operatorname{Re}z\,b}U\,d\nu_\lambda\right)
        \left(\frac{1}{\nu_\lambda(I)}
        \int_I e^{-p'\operatorname{Re}z\,b}\sigma\,d\nu_\lambda\right)^{p-1}  \\
        &\quad =
        e^{p\operatorname{Re}z\,b_I}
        \left(e^{-p'\operatorname{Re}z\,b_I}\right)^{p-1}  
        \left(\frac{1}{\nu_\lambda(I)}
        \int_I e^{p\operatorname{Re}z\,(b-b_I)}U\,d\nu_\lambda\right)
        \left(\frac{1}{\nu_\lambda(I)}
        \int_I e^{-p'\operatorname{Re}z\,(b-b_I)}\sigma\,d\nu_\lambda\right)^{p-1}.
\end{align*}
Since $p'(p-1)=p$, we have
\[
        e^{p\operatorname{Re}z\,b_I}
        \left(e^{-p'\operatorname{Re}z\,b_I}\right)^{p-1}
        =
        e^{p\operatorname{Re}z\,b_I}
        e^{-p\operatorname{Re}z\,b_I}
        =1 .
\]
Hence
\[
\begin{aligned}
        &\left(\frac{1}{\nu_\lambda(I)}
        \int_I e^{p\operatorname{Re}z\,b}U\,d\nu_\lambda\right)
        \left(\frac{1}{\nu_\lambda(I)}
        \int_I e^{-p'\operatorname{Re}z\,b}\sigma\,d\nu_\lambda\right)^{p-1}  \\
        &\quad =
        \left(\frac{1}{\nu_\lambda(I)}
        \int_I e^{p\operatorname{Re}z\,(b-b_I)}U\,d\nu_\lambda\right)
        \left(\frac{1}{\nu_\lambda(I)}
        \int_I e^{-p'\operatorname{Re}z\,(b-b_I)}\sigma\,d\nu_\lambda\right)^{p-1}.
\end{aligned}
\]

To handle each term, we use the reverse H\"older property for $A_p$ weights and the John--Nirenberg inequality for $\BMO_\lambda$.
Quantitatively, if $U\in A_p(d\nu_\lambda)$ and $\sigma=U^{-1/(p-1)}$, then, after decreasing the constant if necessary, we may choose
$$
        \eta=c_{p,\lambda}[U]_{A_p(d\nu_\lambda)}^{-\gamma_p}, \qquad 0<\eta\le1,
$$
so that both $U$ and $\sigma$ satisfy reverse H\"older inequalities with exponent $1+\eta$.

Moreover, if $|z|$ is small enough such that
$$
\max\{p,p'\}|z|\cdot \frac{1+\eta}{\eta}\le \frac{c_\lambda}{\|b\|_{\BMO_\lambda}},
$$
then the John--Nirenberg inequality gives
$$
        \frac{1}{\nu_\lambda(I)}\int_I e^{p\operatorname{Re}z(b-b_I)\cdot \frac{1+\eta}{\eta}}\,d\nu_\lambda
        \le
        \frac{1}{\nu_\lambda(I)}\int_I \exp\left(c_\lambda \frac{|b-b_I|}{\|b\|_{\BMO_\lambda}}\right) d\nu_\lambda 
        \le C_\lambda.
$$
Since $0<\eta\le1$, we may therefore take
$$
        |z|\le c_{p,\lambda}\|b\|_{\BMO_\lambda}^{-1}[U]_{A_p(d\nu_\lambda)}^{-\gamma_p}.
$$

By H\"older's inequality, we obtain that
\begin{align*}
\frac{1}{\nu_\lambda(I)}\int_I e^{p\operatorname{Re}z(b-b_I)}U\,d\nu_\lambda
&\le 
\left(\frac{1}{\nu_\lambda(I)}\int_I e^{p\operatorname{Re}z(b-b_I)\cdot \frac{1+\eta}{\eta}}\,d\nu_\lambda \right)^\frac{\eta}{1+\eta}
\left(\frac{1}{\nu_\lambda(I)}\int_I U^{1+\eta}\,d\nu_\lambda \right)^\frac{1}{1+\eta}\\
&\le C_{\lambda} \left(\frac{1}{\nu_\lambda(I)}\int_I U^{1+\eta}\,d\nu_\lambda \right)^\frac{1}{1+\eta}\\
&\le \frac{C_{\lambda}}{\nu_\lambda(I)}\int_I U\,d\nu_\lambda.
\end{align*}

The same argument for $\sigma$, with $p'$ in place of $p$, is allowed by this smallness condition and gives
$$
\frac{1}{\nu_\lambda(I)}\int_I e^{-p'\operatorname{Re}z(b-b_I)}\sigma\,d\nu_\lambda
\leq \frac{C_\lambda}{\nu_\lambda(I)}\int_I \sigma\,d\nu_\lambda.
$$

Combining the above estimates we have
$$
        \left(\frac{1}{\nu_\lambda(I)}\int_I e^{p\operatorname{Re}zb}U\,d\nu_\lambda\right)
        \left(\frac{1}{\nu_\lambda(I)}\int_I e^{-p'\operatorname{Re}zb}\sigma\,d\nu_\lambda\right)^{p-1}\le C_{p,\lambda} \left(\frac{1}{\nu_\lambda(I)}\int_I U\,d\nu_\lambda\right)
        \left(\frac{1}{\nu_\lambda(I)}\int_I \sigma\,d\nu_\lambda\right)^{p-1}. 
$$
Taking the supremum over $I$ gives \eqref{eq:perturbed-Ap}.
\end{proof}

\section{Proof of the sharp Andersen--Kerman estimate}\label{sec:proof-riesz}

\begin{proof}[Proof of Theorem~\ref{thm:intro-riesz}]
Let $w\in A_{p,\lambda}$ and set
$
        U(x)=x^{p-2\lambda-1}w(x).
$
By Proposition~\ref{prop:exact-weight},
$
        [U]_{A_p(d\nu_\lambda)}=[w]_{A_{p,\lambda}}.
$
Write $f=xF$. By Proposition~\ref{prop:norm-identities}, it is enough to prove
$$
        \|\mathcal R_\lambda F\|_{L^p(\Rp,U\,d\nu_\lambda)} \le C_{p,\lambda}[U]_{A_p(d\nu_\lambda)}^{\gamma_p} \|F\|_{L^p(\Rp,U\,d\nu_\lambda)}.
$$
First assume that $F$ is bounded and compactly supported. For $U\in A_p(d\nu_\lambda)$, both $U\,d\nu_\lambda$
and $U^{-1/(p-1)}d\nu_\lambda$ are locally finite and $\sigma$-finite. Since bounded compactly supported functions are
dense in the corresponding weighted $L^p$ spaces, the uniform estimate below then extends the operator by
completion. Let $\sigma=U^{-1/(p-1)}$. By duality, it suffices to estimate
$$
        \left|\int_0^\infty \mathcal R_\lambda F(x)G(x)\,d\nu_\lambda(x)\right|
$$
for bounded compactly supported $G$ with $\|G\|_{L^{p'}(\Rp,\sigma\,d\nu_\lambda)}\le1$. By sparse domination (Theorem \ref{thm:sparse-domination}),
$$
        \left|\int_0^\infty \mathcal R_\lambda F(x)G(x)\,d\nu_\lambda(x)\right| \le C_\lambda
        \sum_{I\in\mathcal S} \langle |F|\rangle_I\langle |G|\rangle_I\nu_\lambda(I).
$$
The sharp sparse bound as in Theorem \ref{thm:weighted-sparse} further gives
$$
         \sum_{I\in\mathcal S} \langle |F|\rangle_I\langle |G|\rangle_I\nu_\lambda(I) \le
        C_{p,\lambda}[U]_{A_p(d\nu_\lambda)}^{\gamma_p} \|F\|_{L^p(\Rp,U\,d\nu_\lambda)}.
$$
Taking the supremum over all such $G$ proves the estimate for $\mathcal R_\lambda$. The norm identities then give
$$
        \|R_\lambda f\|_{L^p(\Rp,w\,dx)} \le C_{p,\lambda}[w]_{A_{p,\lambda}}^{\gamma_p} \|f\|_{L^p(\Rp,w\,dx)}.
$$
The proof is complete.
\end{proof}

\section{Sharp weighted estimate for the Bessel Riesz commutator}\label{sec:proof-comm}

The commutator estimate follows from the same conjugation and the Coifman--Rochberg--Weiss Cauchy
method \cite{CRW76}, in the sharp weighted form used in \cite{CPP12}.

\begin{theorem}\label{thm:conj-comm}
Let $1<p<\infty$, $U\in A_p(d\nu_\lambda)$, and let $b\in\BMO_\lambda$ be real-valued. Then, for every $F\in L^p(\Rp,U\,d\nu_\lambda)$,
\begin{align}\label{eq:conj-comm}
        \|[b,\mathcal R_\lambda]F\|_{L^p(\Rp,U\,d\nu_\lambda)} \le C_{p,\lambda}\|b\|_{\BMO_\lambda}
        [U]_{A_p(d\nu_\lambda)}^{2\gamma_p} \|F\|_{L^p(\Rp,U\,d\nu_\lambda)}.
\end{align}
The same estimate holds for complex-valued symbols after applying the real-valued estimate to their real and imaginary parts.
\end{theorem}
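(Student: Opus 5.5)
We follow the Coifman--Rochberg--Weiss Cauchy integral method as quantified in \cite{CPP12}. The case $\|b\|_{\BMO_\lambda}=0$ is trivial: $b$ is constant a.e., so $[b,\mathcal R_\lambda]F=0$. Assume $\|b\|_{\BMO_\lambda}>0$, and first take $F$ bounded with compact support. Since $b$ is real-valued, for complex $z$ we define
$$
        T_zF:=e^{zb}\mathcal R_\lambda\bigl(e^{-zb}F\bigr).
$$
Formally $\frac{d}{dz}T_zF\big|_{z=0}=[b,\mathcal R_\lambda]F$. Cauchy's formula on the circle $|z|=\delta$ then gives
$$
        [b,\mathcal R_\lambda]F=\frac{1}{2\pi i}\int_{|z|=\delta}\frac{T_zF}{z^2}\,dz .
$$
To make this rigorous we first truncate $b$, replacing it by $b_N=\max\{-N,\min\{b,N\}\}$. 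Truncation does not increase the $\BMO_\lambda$ norm beyond a constant factor. With $b_N$ bounded, $z\mapsto T_zF$ is an analytic $L^p(U\,d\nu_\lambda)$-valued function, because $e^{\pm zb_N}$ are bounded multipliers and $\mathcal R_\lambda$ is bounded on $L^p(U\,d\nu_\lambda)$ by Theorem~\ref{thm:intro-riesz} via Propositions~\ref{prop:exact-weight} and \ref{prop:norm-identities}. Minkowski's inequality then yields
$$
        \|[b_N,\mathcal R_\lambda]F\|_{L^p(U\,d\nu_\lambda)}\le \frac{1}{\delta}\sup_{|z|=\delta}\|T_zF\|_{L^p(U\,d\nu_\lambda)}.
$$

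The key step bounds $T_z$. Since $|e^{zb}|=e^{\operatorname{Re}z\,b}$,
$$
        \|T_zF\|_{L^p(U\,d\nu_\lambda)}=\|\mathcal R_\lambda(e^{-zb}F)\|_{L^p(U_z\,d\nu_\lambda)},\qquad U_z=e^{p\operatorname{Re}z\,b}U.
$$
We choose
$$
        \delta=\frac{c_{p,\lambda}}{\|b\|_{\BMO_\lambda}[U]_{A_p(d\nu_\lambda)}^{\gamma_p}}.
$$
Lemma~\ref{lem:exp-perturb} then gives $[U_z]_{A_p(d\nu_\lambda)}\le C_{p,\lambda}[U]_{A_p(d\nu_\lambda)}$ for $|z|\le\delta$. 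The conjugated form of Theorem~\ref{thm:intro-riesz}, proved in Section~\ref{sec:proof-riesz} for an arbitrary $A_p(d\nu_\lambda)$ weight via sparse domination and Theorem~\ref{thm:weighted-sparse}, gives
$$
        \|\mathcal R_\lambda(e^{-zb}F)\|_{L^p(U_z\,d\nu_\lambda)}\le C_{p,\lambda}[U_z]^{\gamma_p}_{A_p(d\nu_\lambda)}\|e^{-zb}F\|_{L^p(U_z\,d\nu_\lambda)}.
$$
The right-hand side equals $C_{p,\lambda}[U_z]^{\gamma_p}_{A_p(d\nu_\lambda)}\|F\|_{L^p(U\,d\nu_\lambda)}$, and by the previous bound on $[U_z]$ this is at most $C_{p,\lambda}[U]^{\gamma_p}_{A_p(d\nu_\lambda)}\|F\|_{L^p(U\,d\nu_\lambda)}$. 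Dividing by $\delta$ gives the factor $\|b\|_{\BMO_\lambda}[U]^{2\gamma_p}_{A_p(d\nu_\lambda)}$, which is exactly \eqref{eq:conj-comm} for $b_N$.

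The main obstacle is the limiting and justification step rather than the estimate itself. When $b$ is unbounded, $e^{-zb}F$ need not be bounded or compactly supported. This is why we work with $b_N$, where analyticity of $z\mapsto T_zF$ is clear. We then let $N\to\infty$. The bound is uniform in $N$, and $[b_N,\mathcal R_\lambda]F\to[b,\mathcal R_\lambda]F$ pointwise a.e. along a subsequence. Indeed, $b_NF\to bF$ in $L^p(U\,d\nu_\lambda)$ locally, since $b\in L^q_{\mathrm{loc}}$ for all $q$ by John--Nirenberg and $U$ satisfies a reverse H\"older inequality. Fatou's lemma then transfers the bound to $b$. Density of bounded compactly supported functions extends the estimate to all $F\in L^p(U\,d\nu_\lambda)$. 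Complex symbols follow by splitting into real and imaginary parts, and Proposition~\ref{prop:norm-identities} transfers the result back to $[b,R_\lambda]$ on $L^p(w\,dx)$.
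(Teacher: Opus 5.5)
Your proposal is correct and follows essentially the same route as the paper: you truncate $b$ to $b_N$, run the Coifman--Rochberg--Weiss Cauchy integral with radius $\delta\approx(\|b\|_{\BMO_\lambda}[U]_{A_p(d\nu_\lambda)}^{\gamma_p})^{-1}$, and combine Lemma~\ref{lem:exp-perturb} with the sharp bound for $\mathcal R_\lambda$ on $L^p(U_z\,d\nu_\lambda)$. The only difference is the passage $N\to\infty$, and it is valid: you use strong convergence $b_NF\to bF$ in $L^p(U\,d\nu_\lambda)$, an a.e.\ convergent subsequence and Fatou, whereas the paper tests against bounded compactly supported $G$ in $L^2(d\nu_\lambda)$ and then uses weak compactness and lower semicontinuity (you should also define $\delta$ via $\|b_N\|_{\BMO_\lambda}\le C\|b\|_{\BMO_\lambda}$ when applying the lemma to $b_N$).
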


\begin{proof}
For a real-valued $b\in\BMO_\lambda$, put
$$
        b_N=\max\{-N,\min\{b,N\}\}.
$$
The truncation map is Lipschitz, and therefore
$$
        \|b_N\|_{\BMO_\lambda}\le C\|b\|_{\BMO_\lambda}.
$$
By John--Nirenberg, $b\in L^q_{\mathrm{loc}}(\Rp,\nu_\lambda)$ for every finite $q$, in particular
$b_N\to b$ in $L^2_{\mathrm{loc}}(\Rp,\nu_\lambda)$. If $F$ and $G$ are bounded and compactly
supported, the $L^2(\Rp, d\nu_\lambda)$ boundedness of $\mathcal R_\lambda$ gives
\begin{align}
        \langle [b_N,\mathcal R_\lambda]F,G\rangle_{\nu_\lambda} &=\langle \mathcal R_\lambda F,b_N
        G\rangle_{\nu_\lambda} -\langle \mathcal R_\lambda(b_N F),G\rangle_{\nu_\lambda}, \notag
\end{align}
and both terms converge to their analogues with $b$ in place of $b_N$. Hence $[b_N,\mathcal R_\lambda]F$ converges to $[b,\mathcal
R_\lambda]F$ in the sense of distributions tested against compactly supported bounded functions. A uniform weighted
$L^p(\Rp,U\,d\nu_\lambda)$ bound for the bounded symbols $b_N$ then passes to $b$: by reflexivity, a bounded subsequence has a weak limit in
$L^p(\Rp,U\,d\nu_\lambda)$, the distributional limit identifies it with $[b,\mathcal R_\lambda]F$, and the norm is lower semicontinuous. Thus
it is enough to prove the estimate for $b\in L^\infty\cap\BMO_\lambda$ and bounded compactly supported $F$. The passage to general $F\in
L^p(\Rp,U\,d\nu_\lambda)$ is by density, as in Theorem~\ref{thm:intro-riesz}.

For complex $z$ define
$$
        \Phi(z)F=e^{zb}\mathcal R_\lambda(e^{-zb}F).
$$
Then $\Phi$ is analytic near $z=0$ and
$$
        \Phi'(0)F=[b,\mathcal R_\lambda]F.
$$
By Cauchy's formula, for every $r>0$ small enough,
$$
        [b,\mathcal R_\lambda]F =\frac{1}{2\pi i}\int_{|z|=r}\frac{\Phi(z)F}{z^2}\,dz.
$$
Hence
\begin{align}\label{eq:cauchy-norm}
        \|[b,\mathcal R_\lambda]F\|_{L^p(\Rp,U\,d\nu_\lambda)} \le \frac{1}{r}
        \sup_{|z|=r}\|e^{zb}\mathcal R_\lambda(e^{-zb}F)\|_{L^p(\Rp,U\,d\nu_\lambda)}.
\end{align}
For fixed $z$, set
$$
        U_z=e^{p\operatorname{Re}z\,b}U.
$$
Then
\begin{align}
        \|e^{zb}\mathcal R_\lambda(e^{-zb}F)\|_{L^p(\Rp,U\,d\nu_\lambda)}^p &= \int |\mathcal R_\lambda(e^{-zb}F)|^p e^{p\operatorname{Re}z
        b}U\,d\nu_\lambda = \|\mathcal R_\lambda(e^{-zb}F)\|_{L^p(\Rp,U_z\,d\nu_\lambda)}^p. \notag
\end{align}
If $\|b\|_{\BMO_\lambda}=0$, then $b$ is constant and the commutator is zero. Assume henceforth that $\|b\|_{\BMO_\lambda}>0$ and choose
$$
        r=\frac{c_{p,\lambda}}{\|b\|_{\BMO_\lambda}[U]_{A_p(d\nu_\lambda)}^{\gamma_p}},
$$
with the constant from Lemma~\ref{lem:exp-perturb}. Then $U_z\in A_p(d\nu_\lambda)$ and $[U_z]_{A_p(d\nu_\lambda)}\le
C[U]_{A_p(d\nu_\lambda)}$ for every $|z|=r$. Applying the sharp estimate for $\mathcal R_\lambda$ with the weight $U_z$,
$$
        \|\mathcal R_\lambda(e^{-zb}F)\|_{L^p(\Rp,U_z\,d\nu_\lambda)} \le C_{p,\lambda}[U_z]_{A_p(d\nu_\lambda)}^{\gamma_p}
        \|e^{-zb}F\|_{L^p(\Rp,U_z\,d\nu_\lambda)} \le C_{p,\lambda}[U]_{A_p(d\nu_\lambda)}^{\gamma_p} \|F\|_{L^p(\Rp,U\,d\nu_\lambda)}.
$$
Here
$$
        |e^{-zb}F|^p U_z=|F|^p e^{-p\operatorname{Re}z b}e^{p\operatorname{Re}z b}U =|F|^p U.
$$
Putting this into \eqref{eq:cauchy-norm} gives
$$
        \|[b,\mathcal R_\lambda]F\|_{L^p(\Rp,U\,d\nu_\lambda)} \le \frac{C_{p,\lambda}}{r} [U]_{A_p(d\nu_\lambda)}^{\gamma_p}
        \|F\|_{L^p(\Rp,U\,d\nu_\lambda)} \le C_{p,\lambda}\|b\|_{\BMO_\lambda} [U]_{A_p(d\nu_\lambda)}^{2\gamma_p} \|F\|_{L^p(\Rp,U\,d\nu_\lambda)}.
$$
For a complex-valued symbol, write $b=b_1+ib_2$ with $b_1,b_2$ real-valued. Then
$$
        [b,\mathcal R_\lambda]=[b_1,\mathcal R_\lambda] +i[b_2,\mathcal R_\lambda],
$$
and $\|b_j\|_{\BMO_\lambda}\le \|b\|_{\BMO_\lambda}$ for $j=1,2$. Applying the real-valued estimate
to $b_1$ and $b_2$ gives the same bound, up to changing the constant.
\end{proof}

\begin{proof}[Proof of Theorem~\ref{thm:intro-comm}]
Set $U(x)=x^{p-2\lambda-1}w(x)$ and write $f=xF$. By Proposition~\ref{prop:exact-weight},
$$
        [U]_{A_p(d\nu_\lambda)}=[w]_{A_{p,\lambda}}.
$$
By the commutator norm identity \eqref{eq:norm-comm} and Theorem~\ref{thm:conj-comm},
\begin{align}
        \|[b,R_\lambda]f\|_{L^p(\Rp,w\,dx)} &=\|[b,\mathcal R_\lambda]F\|_{L^p(\Rp,U\,d\nu_\lambda)} \le
        C_{p,\lambda}\|b\|_{\BMO_\lambda} [U]_{A_p(d\nu_\lambda)}^{2\gamma_p} \|F\|_{L^p(\Rp,U\,d\nu_\lambda)} \notag \\
        &= C_{p,\lambda}\|b\|_{\BMO_\lambda} [w]_{A_{p,\lambda}}^{2\gamma_p} \|f\|_{L^p(\Rp,w\,dx)}. \notag
\end{align}
The proof is complete.
\end{proof}

We also address the bounds for iterated commutators.
\begin{corollary}\label{cor:iterated}
Let $\lambda>-1/2$, $\lambda\ne0$, $1<p<\infty$, $w\in A_{p,\lambda}$, let $m\ge1$ be an
integer, and let $b\in\BMO_\lambda$ be real-valued. Define
$$
        R_{\lambda,b}^0=R_\lambda, \qquad R_{\lambda,b}^{k}=[b,R_{\lambda,b}^{k-1}],\qquad k\ge1.
$$
Then, with $\gamma_p=\max\{1,1/(p-1)\}$,
\begin{align}\label{eq:iterated-comm-bound}
        \|R_{\lambda,b}^{m}f\|_{L^p(\Rp,w\,dx)} \le C_{m,p,\lambda}\|b\|_{\BMO_\lambda}^{m} [w]_{A_{p,\lambda}}^{(m+1)\gamma_p}
        \|f\|_{L^p(\Rp,w\,dx)}.
\end{align}
\end{corollary}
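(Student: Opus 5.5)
The plan is to run the same conjugation and the Coifman--Rochberg--Weiss Cauchy integral method as in Theorem~\ref{thm:conj-comm}, now using the $m$-th Taylor coefficient in place of the first derivative.

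\textbf{Step 1: conjugation.} Write $f=xF$ and $U(x)=x^{p-2\lambda-1}w(x)$. Since multiplication by $b$ commutes with multiplication by $x^{-1}$, induction on $k$ gives
$$
R^k_{\lambda,b}f=x\,\mathcal R^k_{\lambda,b}F,\qquad \mathcal R^0_{\lambda,b}=\mathcal R_\lambda,\quad \mathcal R^k_{\lambda,b}=[b,\mathcal R^{k-1}_{\lambda,b}].
$$
By Propositions~\ref{prop:exact-weight} and~\ref{prop:norm-identities}, it then suffices to prove
$$
\|\mathcal R^m_{\lambda,b}F\|_{L^p(U\,d\nu_\lambda)}\lesssim \|b\|_{\BMO_\lambda}^m[U]_{A_p(d\nu_\lambda)}^{(m+1)\gamma_p}\|F\|_{L^p(U\,d\nu_\lambda)}.
$$

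\textbf{Step 2: reductions.} As in the proof of Theorem~\ref{thm:conj-comm}, reduce to $b\in L^\infty\cap\BMO_\lambda$ via the truncations $b_N$, and to bounded compactly supported $F$.

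\textbf{Step 3: Cauchy formula.} Put $\Phi(z)F=e^{zb}\mathcal R_\lambda(e^{-zb}F)$. Expanding the exponentials shows
$$
\Phi^{(m)}(0)F=\sum_{j=0}^m\binom{m}{j}(-1)^{m-j}\,b^j\,\mathcal R_\lambda(b^{m-j}F)=\mathcal R^m_{\lambda,b}F,
$$
which is the usual binomial expansion of the iterated commutator; a short induction on $m$ confirms it. Cauchy's formula gives
$$
\mathcal R^m_{\lambda,b}F=\frac{m!}{2\pi i}\int_{|z|=r}\frac{\Phi(z)F}{z^{m+1}}\,dz,
$$
and hence
$$
\|\mathcal R^m_{\lambda,b}F\|_{L^p(U\,d\nu_\lambda)}\le m!\,r^{-m}\sup_{|z|=r}\|\Phi(z)F\|_{L^p(U\,d\nu_\lambda)}.
$$
Analyticity of $z\mapsto\Phi(z)F$ as an $L^p(U\,d\nu_\lambda)$-valued map is routine for bounded $b$, since the exponential series converge in operator norm after conjugation. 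Vector-valued Cauchy integrals then justify the norm bound.

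\textbf{Step 4: choice of radius.} Take
$$
r=c_{p,\lambda}\,\|b\|_{\BMO_\lambda}^{-1}[U]_{A_p(d\nu_\lambda)}^{-\gamma_p}
$$
as in Lemma~\ref{lem:exp-perturb}. That lemma gives $[U_z]_{A_p(d\nu_\lambda)}\le C[U]_{A_p(d\nu_\lambda)}$ for $|z|=r$. Combining this with the identity $\|\Phi(z)F\|_{L^p(U)}=\|\mathcal R_\lambda(e^{-zb}F)\|_{L^p(U_z)}$ and Theorem~\ref{thm:intro-riesz} in conjugated form yields
$$
\sup_{|z|=r}\|\Phi(z)F\|_{L^p(U\,d\nu_\lambda)}\lesssim [U]_{A_p(d\nu_\lambda)}^{\gamma_p}\|F\|_{L^p(U\,d\nu_\lambda)}.
$$
Multiplying by $r^{-m}\approx\|b\|_{\BMO_\lambda}^m[U]_{A_p(d\nu_\lambda)}^{m\gamma_p}$ gives the exponent $(m+1)\gamma_p$. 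The case $\|b\|_{\BMO_\lambda}=0$ is trivial, since then $b$ is constant and the commutator vanishes.

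\textbf{Main obstacle.} The delicate step is the limiting argument from $b_N$ to $b$. One needs $\mathcal R^m_{\lambda,b_N}F\to\mathcal R^m_{\lambda,b}F$ when tested against bounded compactly supported $G$. Expanding the iterated commutator into terms $\langle\mathcal R_\lambda(b_N^{m-j}F),\,b_N^jG\rangle_{\nu_\lambda}$ reduces this to showing $b_N^k\to b^k$ in $L^2_{\mathrm{loc}}(\nu_\lambda)$ for $k\le m$. That convergence follows from John--Nirenberg, since $b\in L^q_{\mathrm{loc}}$ for all $q<\infty$, together with the $L^2(d\nu_\lambda)$ boundedness of $\mathcal R_\lambda$. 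The pairing $\langle\mathcal R_\lambda(b^{m-j}F),b^jG\rangle$ makes sense for the same reason. After that, reflexivity and weak lower semicontinuity conclude exactly as in Theorem~\ref{thm:conj-comm}.
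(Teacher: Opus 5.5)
Your proposal is correct and follows essentially the same route as the paper. You conjugate to $\mathcal R_\lambda$ on $L^p(\Rp,U\,d\nu_\lambda)$, apply Cauchy's formula for $\Phi^{(m)}(0)$ with radius $r\approx\|b\|_{\BMO_\lambda}^{-1}[U]_{A_p(d\nu_\lambda)}^{-\gamma_p}$ from Lemma~\ref{lem:exp-perturb}, and then pass to unbounded $b$ by truncation and weak compactness. Your binomial expansion of the iterated commutator also makes explicit the convergence $b_N^k\to b^k$ in $L^2_{\mathrm{loc}}(\nu_\lambda)$, which the paper's appeal to Theorem~\ref{thm:conj-comm} leaves implicit when $m\ge2$.
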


\begin{proof}
Set $U=x^{p-2\lambda-1}w$ and write $f=xF$. By Propositions~\ref{prop:exact-weight} and~\ref{prop:norm-identities}, it suffices to prove
the corresponding estimate for the iterated commutators of $\mathcal R_\lambda$ on $L^p(\Rp,U\,d\nu_\lambda)$.

Assume first that $b$ is bounded and $\|b\|_{\BMO_\lambda}>0$. Put
$$
        \Phi(z)F=e^{zb}\mathcal R_\lambda(e^{-zb}F).
$$
The $m$-th derivative of $\Phi(z)F$ at the origin is exactly the $m$-th iterated commutator of $\mathcal R_\lambda$ with the symbol $b$.

Define
$$
       \mathcal  R_{\lambda,b}^0=\mathcal R_\lambda, \qquad \mathcal  R_{\lambda,b}^{k}=[b,\mathcal  R_{\lambda,b}^{k-1}],\qquad k\ge1.
$$
Cauchy's formula gives
$$
        \|\mathcal R_{\lambda,b}^{m}(F)\|_{L^p(\Rp,U\,d\nu_\lambda)} \le C_m r^{-m}
        \sup_{|z|=r} \|e^{zb}\mathcal R_\lambda(e^{-zb}F)\|_{L^p(\Rp,U\,d\nu_\lambda)}.
$$
Choose
$$
        r=\frac{c_{p,\lambda}} {\|b\|_{\BMO_\lambda}[U]_{A_p(d\nu_\lambda)}^{\gamma_p}},
$$
with $c_{p,\lambda}$ as in Lemma~\ref{lem:exp-perturb}. For $|z|=r$ the perturbed weights
$U_z=e^{p\operatorname{Re}z\,b}U$ satisfy $[U_z]_{A_p(d\nu_\lambda)}\lesssim [U]_{A_p(d\nu_\lambda)}$.
Applying the sharp estimate for $\mathcal R_\lambda$ with weight $U_z$ gives
\begin{align}
        \|e^{zb}\mathcal R_\lambda(e^{-zb}F)\|_{L^p(\Rp,U\,d\nu_\lambda)} &=\|\mathcal R_\lambda(e^{-zb}F)\|_{L^p(\Rp,U_z\,d\nu_\lambda)} \notag 
        \le C_{p,\lambda}[U]_{A_p(d\nu_\lambda)}^{\gamma_p} \|F\|_{L^p(\Rp,U\,d\nu_\lambda)}. \notag
\end{align}
Since $r^{-m}\approx \|b\|_{\BMO_\lambda}^{m}[U]_{A_p(d\nu_\lambda)}^{m\gamma_p}$, the conjugated estimate follows. If
$\|b\|_{\BMO_\lambda}=0$, all positive-order commutators vanish. General real-valued symbols follow by the truncation and weak compactness
argument used in Theorem~\ref{thm:conj-comm}. The norm identities return the estimate to the original variables.
\end{proof}

\section{Comparison with usual Bessel weights and the \texorpdfstring{$\widetilde A$}{A-tilde} classes}\label{sec:not-usual}

Three related weight conditions occur in the Bessel setting.

First, the usual weighted theory on the Bessel space of homogeneous type uses
$$
        (\Rp,|x-y|,dm_\lambda), \qquad dm_\lambda(x)=x^{2\lambda}\,dx,
$$
and estimates operators in
$$
        L^p(w\,dm_\lambda) =L^p(\Rp,w(x)x^{2\lambda}\,dx).
$$
The Andersen--Kerman theorem estimates $R_\lambda$ in
$$
        L^p(\Rp,w(x)\,dx).
$$
These are different norms. To rewrite the usual Bessel theorem in the Andersen--Kerman norm, write
$$
        w(x)\,dx=(w(x)x^{-2\lambda})\,dm_\lambda(x).
$$
Thus the usual theorem gives the condition
$$
        w(x)x^{-2\lambda}\in A_p(dm_\lambda).
$$

Second, the papers \cite{LLLS,LLSW} use another Muckenhoupt-type class. In the notation of \cite{LLSW}, for $1<p<\infty$,
\begin{align}
        [w]_{\widetilde A_{p,\beta}} :=\sup_{I\subset\Rp} \left(\frac{1}{\nu_\beta(I)}\int_I w(x)\,dx\right)
        \left(\frac{1}{\nu_\beta(I)}\int_I x^{(2\beta+1)p'}w(x)^{-1/(p-1)}\,dx\right)^{p-1}, \notag
\end{align}
where $d\nu_\beta(x)=x^{2\beta+1}\,dx$. For $-1/2<\lambda<0$, the parameter $\beta=\lambda-\frac12$ lies outside the usual
Bessel-operator range but the displayed weight characteristic still makes sense as a power-measure quantity, since
$2\beta+1=2\lambda>-1$. Taking $\beta=\lambda-\frac12$ gives $d\nu_{\lambda-1/2}=dm_\lambda$. Hence
$$
        [w]_{\widetilde A_{p,\lambda-\frac12}} =[w(x)x^{-2\lambda}]_{A_p(dm_\lambda)}.
$$
Thus the shifted class $\widetilde A_{p,\lambda-\frac12}$ is precisely the class obtained by converting the
usual Bessel $A_p(dm_\lambda)$ condition to the Lebesgue norm $L^p(\Rp,w\,dx)$.

The Andersen--Kerman class is larger than this shifted class.

\begin{proposition}\label{prop:tilde-contained-AK}
Let $\lambda>-1/2$ and $1<p<\infty$. Then
$$
        \widetilde A_{p,\lambda-\frac12}\subset A_{p,\lambda}, \qquad [w]_{A_{p,\lambda}}
        \lesssim_{p,\lambda} [w]_{\widetilde A_{p,\lambda-\frac12}}.
$$
The inclusion is strict.
\end{proposition}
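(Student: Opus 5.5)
The plan is to compare the two characteristics interval by interval, using only the elementary geometry of the two power measures. By the identity recorded just before the proposition,
$$
        [w]_{\widetilde A_{p,\lambda-\frac12}} =\sup_{I\subset\Rp}\left(\frac{1}{m_\lambda(I)}\int_I w\,dx\right)
        \left(\frac{1}{m_\lambda(I)}\int_I x^{2\lambda p'}w^{-1/(p-1)}\,dx\right)^{p-1}.
$$
Both characteristics therefore involve the same two integrals $\int_I w$ (up to the factor $x^p$) and $\int_I x^{2\lambda p'}w^{-1/(p-1)}$. They differ only in the normalizing measure ($\nu_\lambda$ versus $m_\lambda$) and in the extra factor $x^p$ in the first average of \eqref{eq:AK-characteristic-main}.

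Fix a finite interval $I=(a,b)\subset\Rp$. The key elementary estimate is
$$
        \nu_\lambda(I)\gtrsim_\lambda b\, m_\lambda(I).
$$
I will prove it in two cases.
\begin{itemize}
\item If $b\le 2a$, then $x\approx b$ on $I$, so $x^{2\lambda+1}\approx b\,x^{2\lambda}$ there.
\item If $a<b/2$, then $I\supset(b/2,b)$. This gives $\nu_\lambda(I)\gtrsim_\lambda b^{2\lambda+2}$, while $m_\lambda(I)\le m_\lambda((0,b))\approx_\lambda b^{2\lambda+1}$, using $2\lambda>-1$.
\end{itemize}
Since $x\le b$ on $I$, we get $\int_I x^p w\,dx\le b^p\int_I w\,dx$. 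Consequently
$$
        \frac{1}{\nu_\lambda(I)}\int_I x^pw\,dx\lesssim b^{p-1}\frac{1}{m_\lambda(I)}\int_I w\,dx,\qquad
        \frac{1}{\nu_\lambda(I)}\int_I x^{2\lambda p'}w^{-1/(p-1)}dx\lesssim b^{-1}\frac{1}{m_\lambda(I)}\int_I x^{2\lambda p'}w^{-1/(p-1)}dx.
$$
Raising the second bound to the power $p-1$ and multiplying, the factors $b^{p-1}$ and $b^{-(p-1)}$ cancel. Taking the supremum over $I$ yields $[w]_{A_{p,\lambda}}\lesssim_{p,\lambda}[w]_{\widetilde A_{p,\lambda-\frac12}}$, which also gives the inclusion.

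For strictness I will use power weights $w(x)=x^\alpha$ with $-1-p<\alpha\le -1$, for instance $\alpha=-1-p/2$.
\begin{itemize}
\item \emph{Not in the shifted class.} Here $w\notin L^1$ near $0$, so $\int_I w\,dx=\infty$ for every $I=(0,b)$. Hence $w\notin\widetilde A_{p,\lambda-\frac12}$.
\item \emph{In the Andersen--Kerman class.} By Proposition~\ref{prop:exact-weight}, $w\in A_{p,\lambda}$ if and only if $U=x^{p-2\lambda-1+\alpha}\in A_p(d\nu_\lambda)$. A power $x^\beta$ lies in $A_p(x^{2\lambda+1}dx)$ exactly when $-(2\lambda+2)<\beta<(2\lambda+2)(p-1)$; this is the standard local integrability of $x^\beta$ and $x^{-\beta/(p-1)}$ against $x^{2\lambda+1}dx$, checked on intervals $(0,b)$ and on intervals away from $0$. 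With $\beta=p-2\lambda-1+\alpha$, the lower condition is $\alpha>-1-p$ and the upper condition is $\alpha<2\lambda p+p-1$. This recovers \eqref{eq:power-AK-intro}, and both conditions hold for the chosen $\alpha$.
\end{itemize}

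The only mildly delicate step is verifying the power-weight criterion in $A_p(d\nu_\lambda)$, especially that the averages over intervals $(a,b)$ with $a>0$ remain uniformly controlled. This reduces, via the same two-case split (intervals near the origin versus intervals where $x\approx b$), to direct integration of powers.
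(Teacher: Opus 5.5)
Your proposal is correct and follows essentially the paper's argument. The paper likewise uses $\nu_\lambda(I)/m_\lambda(I)\approx_\lambda b$ for $I=(a,b)$, together with $x\le b$ on $I$, so that the extra powers of $b$ cancel between the two averages, and it proves strictness with the power weights $x^{-1-\varepsilon}$, $0<\varepsilon<p$. Your check of the power-weight range through the exact conjugation to $A_p(d\nu_\lambda)$ just spells out the ``direct calculation'' that the paper leaves implicit.
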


\begin{proof}
Put $d\mu_\lambda(x)=x^{2\lambda}\,dx$ and $d\nu_\lambda(x)=x^{2\lambda+1}\,dx$. Let $I=(a,b)\subset \Rp$. Since $2\lambda>-1$, the
$\mu_\lambda$-average of $x$ over $I$ is comparable with the right endpoint $b$:
$$
        \frac{\nu_\lambda(I)}{\mu_\lambda(I)} =\frac{\int_I x\,d\mu_\lambda(x)}{\mu_\lambda(I)} \approx_\lambda b.
$$
If $a>b/2$ this follows at once, while if $a\le b/2$ one integrates over $(b/2,b)$ for the lower bound. Hence
$x\le C_\lambda \nu_\lambda(I)/\mu_\lambda(I)$ for all $x\in I$. Therefore
$$
        \int_I x^p w(x)\,dx \le C_\lambda^p \left(\frac{\nu_\lambda(I)}{\mu_\lambda(I)}\right)^p \int_I w(x)\,dx.
$$
Using this estimate in the definition of $A_{p,\lambda}$ gives
\begin{align}
        &\left(\frac{1}{\nu_\lambda(I)}\int_I x^p w(x)\,dx\right)
        \left(\frac{1}{\nu_\lambda(I)}\int_I x^{2\lambda p'}w(x)^{-1/(p-1)}\,dx\right)^{p-1} \notag \\
        &\qquad \le C_{p,\lambda} \left(\frac{1}{\mu_\lambda(I)}\int_I w(x)\,dx\right)
        \left(\frac{1}{\mu_\lambda(I)}\int_I x^{2\lambda p'}w(x)^{-1/(p-1)}\,dx\right)^{p-1}. \notag
\end{align}
Taking the supremum over $I$ proves the characteristic estimate and the inclusion.

The inclusion is strict by power weights. If $w(x)=x^\alpha$, then a direct calculation gives
$$
        w\in A_{p,\lambda} \quad\Longleftrightarrow\quad -1-p<\alpha<2\lambda p+p-1,
$$
and
$$
        w\in \widetilde A_{p,\lambda-\frac12} \quad\Longleftrightarrow\quad -1<\alpha<2\lambda p+p-1.
$$
Thus $w(x)=x^{-1-\varepsilon}$, $0<\varepsilon<p$, belongs to $A_{p,\lambda}$ but not to $\widetilde A_{p,\lambda-\frac12}$.
\end{proof}

Thus the quantitative Riesz-transform estimate in \cite{LLSW} is formulated for weights in the shifted class $\widetilde
A_{p,\lambda-\frac12}$, whereas our present estimate holds on the full Andersen--Kerman class $A_{p,\lambda}$,
including singular weights at the origin outside $\widetilde A_{p,\lambda-\frac12}$.

There is also a useful comparison with the unshifted class $\widetilde A_{p,\lambda}$, the class
associated with the Bessel maximal operator in \cite{LLLS}. For power weights,
$$
        x^\alpha\in \widetilde A_{p,\lambda} \quad\Longleftrightarrow\quad -1<\alpha<2\lambda p+2p-1.
$$
Consequently $A_{p,\lambda}$ and $\widetilde A_{p,\lambda}$ do not contain one another. The weight $x^{-1-\varepsilon}$,
$0<\varepsilon<p$, belongs to $A_{p,\lambda}$ but not to $\widetilde A_{p,\lambda}$. On the other hand, $x^{2\lambda
p+p-1+\varepsilon}$, $0<\varepsilon<p$, belongs to $\widetilde A_{p,\lambda}$ but not to $A_{p,\lambda}$. Thus $A_{p,\lambda}$ and $\widetilde A_{p,\lambda}$ are distinct weight classes. The former is adapted to the Lebesgue weighted Bessel Riesz
transform, while the latter is tied to the maximal-operator side of the Bessel setting \cite{LLLS}.

\section{Sharpness of the powers}\label{sec:sharpness}

The powers in Theorems~\ref{thm:intro-riesz} and~\ref{thm:intro-comm} are optimal.
The test functions, the variation of the symbol, and the nonconstant part of the conjugated weights in the lower-bound examples can all be localized in a fixed compact subinterval of \(\Rp\).
On such an interval, the measure \(d\nu_\lambda=x^{2\lambda+1}\,dx\) is comparable to Lebesgue measure.
Moreover, \eqref{eq:AK-local-sign-conj} shows that the conjugated Bessel Riesz kernel has a fixed sign and dominates, on one side of the diagonal, the corresponding Hilbert-transform kernel.
Thus the classical sharpness examples for the Hilbert transform and its commutator transfer to the present Bessel setting.

The next lemma permits passage between Lebesgue measure and $\nu_\lambda$ without changing powers of the weight characteristic.

\begin{lemma}\label{lem:compact-localization}
Let $K\Subset\Rp$ and choose an interval $K^*$ with $K\Subset K^*\Subset\Rp$. Then the following
statements hold with constants depending only on $p,\lambda,K,K^*$.

\noindent\emph{(i)} If $F$ is supported in $K$, then, for every weight $W$,
$
        \|F\|_{L^p(\Rp,W\,d\nu_\lambda)}\approx \|F\|_{L^p(\Rp,W\,dx)}.
$

\noindent\emph{(ii)} If $W=1$ on $\Rp\setminus K$, then
$
        [W]_{A_p(d\nu_\lambda)}\approx [W]_{A_p(dx)}.
$

\noindent\emph{(iii)} If $b$ is constant on $\Rp\setminus K$, then
$
        \|b\|_{\BMO_\lambda}\approx \|b\|_{\BMO(dx)}.
$
\end{lemma}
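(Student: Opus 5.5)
The plan is to handle the three parts separately. Part (i) is immediate. For (ii) and (iii), every interval falls into one of three classes, and the only nontrivial class is intervals that meet $K$ but leave $K^*$. Write $\delta=\operatorname{dist}(K,\Rp\setminus K^*)>0$. On $K^*$ the density $x^{2\lambda+1}$ is bounded above and below by constants depending on $\lambda,K^*$.

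\emph{Part (i).} Since $F$ vanishes off $K$ and $x^{2\lambda+1}\approx1$ on $K$, we get
$$\int|F|^pW\,d\nu_\lambda\approx\int|F|^pW\,dx.$$

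\emph{Part (ii).} Let $\sigma=W^{-1/(p-1)}$. Note that $\sigma=1$ off $K$, and by Jensen both characteristics are at least $1$. I would split intervals $I$ into three cases.
\begin{enumerate}
\item[(a)] $I\subset K^*$. Here $d\nu_\lambda\approx dx$ on $I$, so $\nu_\lambda$-averages and Lebesgue averages of $W$ and of $\sigma$ are comparable. The two $A_p$ products over $I$ are therefore comparable.
\item[(b)] $I\cap K=\emptyset$. Then $W=\sigma=1$ on $I$, and both products equal $1$.
\item[(c)] $I$ meets $K$ and $I\not\subset K^*$. Then $|I|\ge\delta$, so $\nu_\lambda(I)\gtrsim1$ and $|I|\gtrsim1$, with constants depending on $\lambda,K,K^*$.
\end{enumerate}

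Case (c) is the main obstacle, because the bound must stay linear in the characteristic. I would compare with the fixed reference interval $J=K^*$. Since $W=\sigma=1$ on $K^*\setminus K$, which has length at least $\delta$, we have
$$\langle W\rangle_J\ge c \quad\text{and}\quad \langle\sigma\rangle_J\ge c.$$
Hence
$$\langle W\rangle_J\le c^{1-p}\langle W\rangle_J\langle\sigma\rangle_J^{p-1}\le C[W]$$
and
$$\langle\sigma\rangle_J^{p-1}\le c^{-1}\langle W\rangle_J\langle\sigma\rangle_J^{p-1}\le C[W],$$
where the averages and the characteristic are taken in whichever measure we are bounding from. For case (c), split $I=(I\cap K)\cup(I\setminus K)$:
$$\frac1{\nu_\lambda(I)}\int_IW\,d\nu_\lambda\le 1+\frac{C}{\nu_\lambda(I)}\int_KW\,dx=:1+A,\qquad \text{with } A\lesssim\langle W\rangle_J,$$
and similarly for $\sigma$ with $B\lesssim\langle\sigma\rangle_J$. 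Expanding gives
$$(1+A)(1+B)^{p-1}\lesssim 1+A+B^{p-1}+AB^{p-1}.$$
Each of the four terms is $\lesssim[W]_{A_p(dx)}$ by the bounds above on $J$ and since $1\le[W]$. So the sup over case (c) intervals in $\nu_\lambda$ is $\lesssim[W]_{A_p(dx)}$. The reverse inequality is proved the same way, with the roles of $dx$ and $d\nu_\lambda$ exchanged. The comparison is linear, so powers of the characteristic are preserved.

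\emph{Part (iii).} Let $b=b_0$ off $K$. I would use the fact that the mean oscillation over $I$ is comparable, within a factor $2$, to $\inf_c\langle|b-c|\rangle_I$.
\begin{enumerate}
\item[(a)] For $I\subset K^*$, the two oscillations are comparable because the measures are.
\item[(b)] For $I\cap K=\emptyset$, the oscillation vanishes.
\item[(c)] For the remaining intervals, take $c=b_0$. Then
$$\langle|b-b_0|\rangle_I\lesssim\int_K|b-b_0|\,dx\lesssim\langle|b-b_0|\rangle_{K^*}.$$
Also $|b_0-b_{K^*}|\le\theta^{-1}\langle|b-b_{K^*}|\rangle_{K^*}$, where $\theta=|K^*\setminus K|/|K^*|$, since $b=b_0$ on $K^*\setminus K$. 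Hence
$$\langle|b-b_0|\rangle_{K^*}\lesssim\langle|b-b_{K^*}|\rangle_{K^*}\le\|b\|_{\BMO(dx)}.$$
\end{enumerate}
The reverse direction in (c) is symmetric, using $\nu_\lambda$-averages on $K^*$.
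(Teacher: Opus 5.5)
Your proposal is correct and follows the paper's proof essentially step for step. Both use the same three-way split of intervals (disjoint from $K$, contained in $K^*$, meeting $K$ but leaving $K^*$) and control the last case through averages over the reference interval $K^*$, using that $W=\sigma=1$ (respectively $b=b_0$) on $K^*\setminus K$; the only cosmetic difference is in (ii), where you expand $(1+A)(1+B)^{p-1}$ into four terms and bound each by the characteristic, while the paper absorbs the constant $1$ directly using the lower bounds on those averages.
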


\begin{proof}
Part (i) follows from the upper and lower bounds for $x^{2\lambda+1}$ on $K$.

It is enough to prove one of the two implications in (ii), since the proof of the other implication is the same. Let
$
        \sigma=W^{-1/(p-1)} .
$
We first show that the \(A_p\) quantity computed with respect to \(d\nu_\lambda\) is controlled by the usual \(A_p\) quantity computed with respect to Lebesgue measure. More precisely, for each interval \(L\), we shall compare
$$
        \left(\frac1{\nu_\lambda(L)}\int_L W\,d\nu_\lambda\right)
        \left(\frac1{\nu_\lambda(L)}\int_L \sigma\,d\nu_\lambda\right)^{p-1}
$$
with the corresponding expression in which \(d\nu_\lambda\) is replaced by \(dx\). For a positive measure \(\mu\), write
$$
        a_p^{d\mu}(L,W)
        :=
        \left(\frac1{\mu(L)}\int_L W\,d\mu\right)
        \left(\frac1{\mu(L)}\int_L \sigma\,d\mu\right)^{p-1}.
$$

We now consider the following three cases.

(1). $L\cap K=\emptyset$. Then $a_p^{d\nu_\lambda}(L,W)=1$. 

(2). $L\subset K^*$. Then the  integration averages over
$L$ in terms of $dx$ and $d\nu_\lambda$ are comparable, because $x^{2\lambda+1}$ is bounded above and below on $K^*$. Hence, 
$$
        a_p^{d\nu_\lambda}(L,W) \lesssim a_p^{dx}(L,W) \le [W]_{A_p(dx)}.
$$

(3). $L\cap K\not=\emptyset\ $ and $\ L\not\subset K^*$. Since $K\Subset K^*$,
such intervals have $\nu_\lambda(L)\ge c_{K,K^*,\lambda}>0$. Let
$$
        A=\frac1{\nu_\lambda(K^*)}\int_{K^*}W\,d\nu_\lambda, \qquad B=\frac1{\nu_\lambda(K^*)}\int_{K^*}\sigma\,d\nu_\lambda.
$$
Because $W=\sigma=1$ on $K^*\setminus K$, and this set has positive $\nu_\lambda$-measure,
$A$ and $B$ are bounded below by a fixed positive constant. Also
$$
        \frac1{\nu_\lambda(L)}\int_L W\,d\nu_\lambda 
        \le \frac1{\nu_\lambda(L)} \bigg(\int_{L\backslash K} W\,d\nu_\lambda+\int_K W\,d\nu_\lambda\bigg)
        \le 1+C_{K,K^*,\lambda} \,A,
 $$
 $$\frac1{\nu_\lambda(L)}\int_L \sigma\,d\nu_\lambda 
 \le \frac1{\nu_\lambda(L)} \bigg(\int_{L\backslash K} \sigma\,d\nu_\lambda+\int_K \sigma\,d\nu_\lambda\bigg)
 \le 1+C_{K,K^*,\lambda}\,B.
$$
Consequently
$$
        a_p^{d\nu_\lambda}(L,W) \lesssim A B^{p-1}.
$$
On $K^*$, the two measures $dx$ and $d\nu_\lambda$ are comparable. Hence
$$
        AB^{p-1}\lesssim [W]_{A_p(dx)}.
$$
Therefore $[W]_{A_p(d\nu_\lambda)}\lesssim [W]_{A_p(dx)}$. Interchanging the two measures gives the reverse inequality.

For (iii), it is enough to prove one direction, namely $\|b\|_{\BMO(d\nu_\lambda)}\lesssim\|b\|_{\BMO(dx)}$. Let $b_\infty$ denote the
constant value of $b$ on $\Rp\setminus K$. 
If $L\cap K=\emptyset$, 
then $$\frac1{\nu_\lambda(L)}\int_L |b-b_L|d\nu_\lambda =0.$$
If $L\subset
K^*$, then the integration averages over $L$ in terms of $dx$ and $d\nu_\lambda$  are comparable. For any constant $a$,
$$
        \frac1{\nu_\lambda(L)}\int_L |b-a|\,d\nu_\lambda \lesssim_{K^*,\lambda} \frac1{|L|}\int_L |b-a|\,dx.
$$
Taking the infimum over $a$ and using the equivalent infimum form of mean oscillation gives the desired comparison on such intervals.

It suffices to consider $L\cap K\not=\emptyset\ $ and $\ L\not\subset K^*$. Since $\nu_\lambda(L)\ge c_{K,K^*,\lambda}>0$ and $b=b_\infty$ on $\Rp\backslash K$,
$$
        \frac1{\nu_\lambda(L)}\int_L |b-b_\infty|\,d\nu_\lambda \lesssim_{K,K^*,\lambda} \int_K |b-b_\infty|\,d\nu_\lambda.
$$
The last integral is bounded by $\|b\|_{\BMO(dx)}$. Indeed, since $K^*\setminus K$ has positive Lebesgue measure and $b=b_\infty$ on $K^*\setminus K$,
$$
        |b_\infty-b_{K^*}^{dx}| 
        = \frac1{|K^*\backslash K|}\, \bigg|\int_{K^*\backslash K} b_\infty-b_{K^*}^{dx}\,dx\bigg|
        \le C_{K,K^*}\frac1{|K^*|}\int_{K^*}|b-b_{K^*}^{dx}|\,dx \le C_{K,K^*}\|b\|_{\BMO(dx)}.
$$
Thus
$$
        \int_K |b-b_\infty|\,d\nu_\lambda \lesssim_{K,K^*,\lambda} \int_{K^*}|b-b_{K^*}^{dx}|\,dx+|K^*|\|b\|_{\BMO(dx)} \lesssim_{K,K^*,\lambda} \|b\|_{\BMO(dx)}.
$$
Using $b_\infty$ as the comparison constant in the mean oscillation over $L$ gives the required bound. The reverse BMO
inequality is the same argument with $dx$ and $d\nu_\lambda$ interchanged.
\end{proof}

The estimates in this subsection are local forms of the standard sharp examples for the Hilbert transform, due to Hunt--Muckenhoupt--Wheeden and Petermichl
\cite{HuntMuckenhouptWheeden1973,Pet07}. To implement this, we first need the following auxiliary lemma. It places the two testing intervals on different sides of the origin and hence the points \(x\) and \(y\) which occur in the kernel are always separated by \(0\). In particular, \(x\ne y\), and the kernel is used as an ordinary function. No principal-value cancellation is needed.

\begin{lemma}\label{lem:one-sided-model}
Let $1<p<\infty$ and
$$
        \gamma_p=\max\left\{1,\frac1{p-1}\right\}.
$$
For $0<\varepsilon<1/4$ there are weights $W_\varepsilon$ on $(-1,1)$, non-negative functions $F_\varepsilon$,
and the symbol $b(t)=\log |t|$ such that $[W_\varepsilon]_{A_p(-1,1)}\to\infty$ and the following lower bounds
hold. Here $[W]_{A_p(-1,1)}$ denotes the Lebesgue $A_p$ characteristic after setting $W(t)=1$ for $|t|\ge1$. For the power weights below this is equivalent, with constants
independent of $\varepsilon$, to taking the supremum over subintervals of $(-1,1)$.

If $1<p\le2$, set
$$
        W_\varepsilon(t)=|t|^{(p-1)(1-\varepsilon)}, \qquad F_\varepsilon(t)=|t|^{-1+\varepsilon}\one_{(-1,0)}(t).
$$
For $0<s<1/16$ define
$$
        T_\varepsilon^-(s) =\int_0^1\frac{u^{-1+\varepsilon}}{s+u}\,du, \qquad C_\varepsilon^-(s)
        =\left| \int_0^1\frac{(\log s-\log u)u^{-1+\varepsilon}}{s+u}\,du \right|.
$$
Then, with $A_\varepsilon=[W_\varepsilon]_{A_p(-1,1)}$,
\begin{align}\label{eq:model-lower-left}
        \|T_\varepsilon^-\|_{L^p((0,1/16),W_\varepsilon(s)\,ds)} \ge c_p A_\varepsilon^{\gamma_p}
        \|F_\varepsilon\|_{L^p((-1,1),W_\varepsilon(t)\,dt)}
\end{align}
and
\begin{align}\label{eq:model-comm-lower-left}
        \|C_\varepsilon^-\|_{L^p((0,1/16),W_\varepsilon(s)\,ds)} \ge c_p A_\varepsilon^{2\gamma_p}
        \|F_\varepsilon\|_{L^p((-1,1),W_\varepsilon(t)\,dt)}.
\end{align}

If $p>2$, set
$$
        W_\varepsilon(t)=|t|^{\varepsilon-1}, \qquad F_\varepsilon(t)=\one_{(0,1)}(t).
$$
For $0<s<1/16$ define
$$
        T_\varepsilon^+(s) =\int_0^1\frac{du}{s+u}, \qquad C_\varepsilon^+(s) =\left| \int_0^1\frac{\log s-\log u}{s+u}\,du \right|.
$$
Then, with $A_\varepsilon=[W_\varepsilon]_{A_p(-1,1)}$,
\begin{align}\label{eq:model-lower-right}
        \|T_\varepsilon^+\|_{L^p((0,1/16),W_\varepsilon(s)\,ds)} \ge c_p A_\varepsilon^{\gamma_p}
        \|F_\varepsilon\|_{L^p((-1,1),W_\varepsilon(t)\,dt)}
\end{align}
and
\begin{align}\label{eq:model-comm-lower-right}
        \|C_\varepsilon^+\|_{L^p((0,1/16),W_\varepsilon(s)\,ds)} \ge c_p A_\varepsilon^{2\gamma_p}
        \|F_\varepsilon\|_{L^p((-1,1),W_\varepsilon(t)\,dt)}.
\end{align}
Moreover, the same lower bounds are stable under replacement of the kernel. More
precisely, let $L(s,u)$ be a non-negative measurable kernel such that
\begin{align}\label{eq:model-kernel-comparable}
        \frac{c_0}{s+u}\le L(s,u)\le\frac{C_0}{s+u}, \qquad 0<s<\frac{1}{16},\quad 0<u<1.
\end{align}
For the commutator estimates with this replacement kernel, assume in addition that
$0<\varepsilon<\varepsilon_0(c_0,C_0)$.
If $1<p\le2$ and
$$
        T_{\varepsilon,L}^-(s)=\int_0^1L(s,u)u^{-1+\varepsilon}\,du, \qquad
        C_{\varepsilon,L}^-(s)= \left|\int_0^1(\log s-\log u)L(s,u)u^{-1+\varepsilon}\,du\right|,
$$
then \eqref{eq:model-lower-left}--\eqref{eq:model-comm-lower-left} remain true with $T_\varepsilon^-,C_\varepsilon^-$
replaced by $T_{\varepsilon,L}^-,C_{\varepsilon,L}^-$. If $p>2$ and
$$
        T_{\varepsilon,L}^+(s)=\int_0^1L(s,u)\,du, \qquad C_{\varepsilon,L}^+(s)= \left|\int_0^1(\log s-\log u)L(s,u)\,du\right|,
$$
then \eqref{eq:model-lower-right}--\eqref{eq:model-comm-lower-right} remain true with $T_\varepsilon^+,C_\varepsilon^+$ replaced by
$T_{\varepsilon,L}^+,C_{\varepsilon,L}^+$. The constants may also depend on $c_0$ and $C_0$.
\end{lemma}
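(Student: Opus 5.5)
We prove Lemma~\ref{lem:one-sided-model} by explicit computation with power weights. Everything reduces to one-dimensional integrals whose asymptotics as $\varepsilon\to0$ can be read off.

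\emph{Step 1: the characteristic.} For $W(t)=|t|^{\beta}$ on $(-1,1)$, extended by $1$ outside, the Lebesgue $A_p$ characteristic is comparable, uniformly in $\varepsilon$, to the value on the symmetric intervals $(-r,r)$. There it equals
\[
\frac{1}{\beta+1}\Bigl(\frac{1}{1-\beta/(p-1)}\Bigr)^{p-1}.
\]
For $1<p\le2$ we take $\beta=(p-1)(1-\varepsilon)$, which gives $A_\varepsilon\approx\varepsilon^{-(p-1)}$. For $p>2$ we take $\beta=\varepsilon-1$, which gives $A_\varepsilon\approx\varepsilon^{-1}$. In both cases $A_\varepsilon^{\gamma_p}\approx\varepsilon^{-1}$, since $\gamma_p=1/(p-1)$ for $p\le2$ and $\gamma_p=1$ for $p>2$. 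Uniformity on non-symmetric intervals is the standard comparison for power weights. Intervals reaching $|t|\ge1$ are handled as in Lemma~\ref{lem:compact-localization}.

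\emph{Step 2: the Riesz lower bounds.} First we compute $\|F_\varepsilon\|_{L^p(W_\varepsilon)}$.
\begin{itemize}
\item[] For $p\le2$: $\|F_\varepsilon\|^p=\int_0^1 u^{(-1+\varepsilon)p+(p-1)(1-\varepsilon)}\,du=\int_0^1u^{-1+\varepsilon}\,du=\varepsilon^{-1}$.
\item[] For $p>2$: $\|F_\varepsilon\|^p=\int_0^1u^{\varepsilon-1}\,du=\varepsilon^{-1}$.
\end{itemize}
Next we bound the operators from below.
\begin{itemize}
\item[] For $p\le2$, restrict to $u>s$: $T^-_\varepsilon(s)\ge\frac12\int_s^1u^{-2+\varepsilon}\,du\gtrsim s^{-1+\varepsilon}$ for $s<1/16$. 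Then $\|T^-_\varepsilon\|^p\gtrsim\int_0^{1/16}s^{-1+\varepsilon}\,ds\approx\varepsilon^{-1}$, which only yields a constant ratio. So we instead use the region $u<s$: $T^-_\varepsilon(s)\ge\frac1{2s}\int_0^su^{-1+\varepsilon}\,du=\frac{s^{-1+\varepsilon}}{2\varepsilon}$. This gives $\|T^-_\varepsilon\|\gtrsim\varepsilon^{-1}\|F_\varepsilon\|$, which is the bound $A_\varepsilon^{\gamma_p}$.
\item[] For $p>2$: $T^+_\varepsilon(s)=\log\frac{1+s}{s}\gtrsim\log(1/s)$. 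Then $\int_0^{1/16}\log^p(1/s)\,s^{\varepsilon-1}\,ds\approx\Gamma(p+1)\varepsilon^{-p-1}$. Hence $\|T^+_\varepsilon\|\gtrsim\varepsilon^{-1}\varepsilon^{-1/p}=\varepsilon^{-1}\|F_\varepsilon\|$.
\end{itemize}

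\emph{Step 3: the commutator lower bounds.} The only issue is the sign of $\log s-\log u$.
\begin{itemize}
\item[] For $p\le2$: on $u<s$ the integrand is positive, contributing $\frac{1}{2s}\int_0^s\log(s/u)\,u^{-1+\varepsilon}\,du=\frac{s^{-1+\varepsilon}}{2\varepsilon^2}$. On $u>s$ it is negative, with magnitude at most $\int_s^1\log(u/s)\,u^{-2+\varepsilon}\,du\lesssim s^{-1+\varepsilon}$, which is lower order. So $C^-_\varepsilon(s)\gtrsim\varepsilon^{-2}s^{-1+\varepsilon}$ for small $\varepsilon$, giving the ratio $\varepsilon^{-2}\approx A_\varepsilon^{2\gamma_p}$.
\item[] For $p>2$: the $u>s$ part is negative, of size $\approx\frac12\log^2(1/s)$, and dominates the $u<s$ part, which is $O(1)$. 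So $C^+_\varepsilon(s)\gtrsim\log^2(1/s)$, and $\int\log^{2p}(1/s)\,s^{\varepsilon-1}\,ds\approx\varepsilon^{-2p-1}$ gives the ratio $\varepsilon^{-2}$.
\end{itemize}

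\emph{Step 4: stability under kernel replacement.} For $T_{\varepsilon,L}$, positivity together with \eqref{eq:model-kernel-comparable} transfers the lower bounds directly, with the constant multiplied by $c_0$.

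For $C_{\varepsilon,L}$ we split the integral into the two sign regions. The dominant region is bounded below using $c_0$, and the opposite region is bounded above using $C_0$.
\begin{itemize}
\item[] For $p\le2$ this gives $\ge c_0\varepsilon^{-2}s^{-1+\varepsilon}/2-C_0C s^{-1+\varepsilon}$. This is positive and of order $\varepsilon^{-2}s^{-1+\varepsilon}$ once $\varepsilon<\varepsilon_0(c_0,C_0)$.
\item[] For $p>2$ the dominant term is $\gtrsim c_0\log^2(1/s)$, against $C_0\,O(1)$. So we restrict to $s<s_0(c_0,C_0)$, which changes the $\varepsilon^{-2p-1}$ asymptotics only by constants.
\end{itemize}
The main obstacle is this step: controlling the opposite-sign contribution uniformly and checking that the restriction $s<s_0$ costs nothing.
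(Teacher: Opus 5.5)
Your proposal is correct and follows essentially the same route as the paper. Both arguments evaluate the power-weight characteristic on symmetric intervals, compute $\|F_\varepsilon\|^p=\varepsilon^{-1}$ exactly, split the integrals at $u=s$ with the dominant region $u<s$ for $1<p\le2$ and $u>s$ for $p>2$, and handle comparable kernels by the same $\varepsilon_0(c_0,C_0)$ and $s_0(c_0,C_0)$ perturbation arguments. The differences are cosmetic: you bound directly on $u<s$ via $(s+u)^{-1}\ge(2s)^{-1}$ instead of rescaling $u=sv$. Also, your ``for small $\varepsilon$'' in the unperturbed $p\le2$ commutator case in fact covers all $0<\varepsilon<1/4$, since $\frac{1}{2\varepsilon^2}-\frac{1}{(1-\varepsilon)^2}\ge\frac{1}{4\varepsilon^2}$ there.
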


\begin{proof}
For a power weight $|t|^\alpha$ on $(-1,1)$, with $-1<\alpha<p-1$,
$$
        [|t|^\alpha]_{A_p(-1,1)} \approx_p \frac1{\alpha+1} \left(\frac{p-1}{p-1-\alpha}\right)^{p-1}.
$$
For $1<p\le2$, this gives $A_\varepsilon\approx_p\varepsilon^{1-p}$. Moreover,
$$
        \|F_\varepsilon\|_{L^p((-1,1),W_\varepsilon\,dt)}^p
        =\int_0^1 u^{-1+\varepsilon}\,du
        =\varepsilon^{-1}.
$$
For $0<s<1/16$,
$$
        T_\varepsilon^-(s) =s^{-1+\varepsilon} \int_0^{1/s}\frac{v^{-1+\varepsilon}}{1+v}\,dv \ge c\,\varepsilon^{-1}s^{-1+\varepsilon}.
$$
Also
\begin{align}
        \int_0^1\frac{(\log s-\log u)u^{-1+\varepsilon}}{s+u}\,du &= s^{-1+\varepsilon} \int_0^{1/s}
        \frac{\log(1/v)v^{-1+\varepsilon}}{1+v}\,dv. \notag
\end{align}
Notice that for $0<\varepsilon<1/4$,
$$
\left|\int_0^{1/s}
        \frac{\log(1/v)v^{-1+\varepsilon}}{1+v}\,dv\right|
        \ge\left|\int_0^1
        \frac{\log(1/v)v^{-1+\varepsilon}}{1+v}\,dv\right|
        -\left|\int_1^{1/s}
        \frac{\log(1/v)v^{-1+\varepsilon}}{1+v}\,dv\right|
        \ge \frac{1}{2\varepsilon^2}-\frac{16}{9}
        \ge  \frac{1}{4\varepsilon^2}.
$$
Hence, for $0<\varepsilon<1/4$,
$$
        C_\varepsilon^-(s)\ge c\,\varepsilon^{-2}s^{-1+\varepsilon}, \qquad 0<s<1/16.
$$

Since
$$
        \int_0^{1/16}s^{-1+\varepsilon}\,ds\approx\varepsilon^{-1},
$$
it follows that
$$\|T_\varepsilon^-\|^p_{L^p((0,1/16),W_\varepsilon(s)\,ds)}
=\int_0^{\frac{1}{16}} |T_\varepsilon^-(s)|^p\, s^{(p-1)(1-\varepsilon)}\, ds
\gtrsim \varepsilon^{-p}\int_0^{\frac{1}{16}}s^{-1+\varepsilon}\,ds
\gtrsim \varepsilon^{-p-1},
$$
and
$$\|C_\varepsilon^-\|^p_{L^p((0,1/16),W_\varepsilon(s)\,ds)}
=\int_0^{\frac{1}{16}} |C_\varepsilon^-(s)|^p\, s^{(p-1)(1-\varepsilon)}\, ds
\gtrsim \varepsilon^{-2p}\int_0^{\frac{1}{16}}s^{-1+\varepsilon}\,ds
\gtrsim \varepsilon^{-2p-1}.
$$

Consequently, with $A_\varepsilon\approx_p\varepsilon^{1-p}$ and 
$\gamma_p=\max\{1,\frac1{p-1}\}=\frac1{p-1}$, the corresponding ratios are
$$\|T_\varepsilon^-\|_{L^p((0,1/16),W_\varepsilon(s)\,ds)}/ 
\|F_\varepsilon\|_{L^p((-1,1),W_\varepsilon(t)\,dt)}
\gtrsim \varepsilon^{-1}\approx A_\varepsilon^{\gamma_p},
$$
and
$$\|C_\varepsilon^-\|_{L^p((0,1/16),W_\varepsilon(s)\,ds)}/ 
\|F_\varepsilon\|_{L^p((-1,1),W_\varepsilon(t)\,dt)}
\gtrsim \varepsilon^{-2}\approx A_\varepsilon^{2\gamma_p},
$$
which proves the two lower bounds \eqref{eq:model-lower-left} and \eqref{eq:model-comm-lower-left}.

For $p>2$, one has $A_\varepsilon\approx_p\varepsilon^{-1}$ and
$$
        \|F_\varepsilon\|_{L^p(W_\varepsilon\,dt)}^p =\int_0^1u^{\varepsilon-1}\,du =\varepsilon^{-1}.
$$
For $0<s<1/16$,
$$
        T_\varepsilon^+(s)=\int_0^1\frac{du}{s+u} \ge c\log\frac1s.
$$

Furthermore, 
\begin{align*}
        \left|\int_1^{1/s}\frac{\log(1/v)}{1+v}\,dv\right|
        &=
        \int_1^{1/s}\frac{\log v}{1+v}\,dv  
        \ge
        \frac12\int_1^{1/s}\frac{\log v}{v}\,dv
        =
        \frac14 \bigg(\log\frac1s\bigg)^2 ,
\end{align*}
where we used $1+v\le 2v$ for $v\ge 1$. Also
$$
        \int_0^1\frac{\log(1/v)}{1+v}\,dv
        =
        \frac{\pi^2}{12}.
$$
Therefore
$$
        C_\varepsilon^+(s)
        =
        \left|\int_0^{1/s}\frac{\log(1/v)}{1+v}\,dv\right|  
        \ge
        \left|\int_1^{1/s}\frac{\log(1/v)}{1+v}\,dv\right|
        -
        \left|\int_0^1\frac{\log(1/v)}{1+v}\,dv\right|  
        \ge
        \frac14\left(\log\frac1s\right)^2
        -
        \frac{\pi^2}{12}.
$$

For $0<s<1/16$, 
$$
        \frac14\left(\log\frac1s\right)^2
        -
        \frac{\pi^2}{12}
        \ge
        \frac18\left(\log\frac1s\right)^2 .
$$
Hence, 
$$
        C_\varepsilon^+(s)
        \ge
        \frac18\left(\log\frac1s\right)^2 .
$$

Finally,
$$
        \int_0^{1/16}\left(\log\frac1s\right)^q s^{\varepsilon-1}\,ds \approx_q \varepsilon^{-q-1}, \qquad q>0,
$$
uniformly for $0<\varepsilon<1/16$. Then it follows that
$$
\|T_\varepsilon^+\|^p_{L^p((0,1/16),W_\varepsilon(s)\,ds)}
=\int_0^{\frac1{16}} |T_\varepsilon^+(s)|^p\,s^{\varepsilon-1}\,ds
\gtrsim \varepsilon^{-p-1},
$$
and
$$
\|C_\varepsilon^+\|^p_{L^p((0,1/16),W_\varepsilon(s)\,ds)}
=\int_0^{\frac1{16}} |C_\varepsilon^+(s)|^p\,s^{\varepsilon-1}\,ds
\gtrsim \varepsilon^{-2p-1}.
$$

The ratios we want are therefore bounded below by $\varepsilon^{-1}$ and $\varepsilon^{-2}$.
Since $\gamma_p=1$ for $p>2$, this proves the two lower bounds \eqref{eq:model-lower-right} and \eqref{eq:model-comm-lower-right}.

We now explain why the same lower bounds remain true for every kernel satisfying
\eqref{eq:model-kernel-comparable}. For the estimates of \(T_\varepsilon^\pm\), no cancellation is used. The proof only uses the lower bound in \eqref{eq:model-kernel-comparable}. Hence the previous lower estimates remain valid, with constants changed by a factor depending on \(c_0\).

For the commutator estimates, we split the integral into the two regions
\[
        0<u<s
        \qquad\text{and}\qquad
        s<u<1 .
\]
Consider first the case \(1<p\le 2\). In this case the main contribution comes from \(0<u<s\). On this region the kernel has a fixed sign, and the lower bound in \eqref{eq:model-kernel-comparable} gives
\[
        \left|\int_0^s (\log s-\log u)L(s,u)u^{-1+\varepsilon}\,du\right|
        \ge
        c_0 s^{-1+\varepsilon}\int_0^1
        \frac{\log(1/v)v^{-1+\varepsilon}}{1+v}\,dv
        \ge
        \frac{1}{2}c_0\varepsilon^{-2}s^{-1+\varepsilon}.
\]
On the other hand, the contribution from \(s<u<1\) is an error term. Using the upper bound in \eqref{eq:model-kernel-comparable}, we have
\[
        \left|\int_s^1 (\log s-\log u)L(s,u)u^{-1+\varepsilon}\,du\right|
        \le
        C_0 s^{-1+\varepsilon}\int_1^{1/s}
        \frac{\log v\ v^{-1+\varepsilon}}{1+v}\,dv
        \le
        \frac{16}{9}C_0s^{-1+\varepsilon}.
\]
Thus, if \(\varepsilon>0\) is chosen small enough, the first term is larger than twice the second term. Therefore the full commutator term is bounded from below by
\[
        C_{\varepsilon,L}^-(s)
        =\left|\int_0^1(\log s-\log u)L(s,u)u^{-1+\varepsilon}\,du\right|
        \ge c\,\varepsilon^{-2}s^{-1+\varepsilon}.
\]
with a sufficiently small constant \(c\).

Now consider the case \(p>2\). In this case the main contribution comes from \(s<u<1\). The lower bound in \eqref{eq:model-kernel-comparable} gives
\[
        \left|\int_s^1 (\log s-\log u)L(s,u)\,du\right|
        \ge
        c_0 \int_1^{1/s}
        \frac{\log v}{1+v}\,dv
        \ge
        \frac{c_0}{4}\left(\log\frac1s\right)^2 .
\]
The remaining part, coming from \(0<u<s\), is bounded in absolute value by a constant:
\[
        \left|\int_0^s (\log s-\log u)L(s,u)\,du\right|
        \le
        C_0 \int_0^1
        \frac{\log(1/v)}{1+v}\,dv
        \le \frac{\pi^2}{12}C_0 .
\]
Hence, after restricting \(s\) to a sufficiently small interval, for example
\[
        0<s<s_0,
        \qquad
        s_0=\min\left\{\frac1{16},
        \exp\left(-\pi\sqrt{\frac{C_0}{c_0}}\right)\right\}.
\]
The logarithmic term is then larger than twice the error term. Therefore the full commutator term satisfies
\[
        C_{\varepsilon,L}^+(s)= \left|\int_0^1(\log s-\log u)L(s,u)\,du\right|
        \ge
        c\left(\log\frac1s\right)^2,
\]
with a sufficiently small constant \(c\).

Integrating over $(0,s_0)$ gives the required lower estimates. The constants may now also depend on the comparison constants \(c_0\) and \(C_0\) in \eqref{eq:model-kernel-comparable}.
\end{proof}

\begin{proposition}\label{prop:sharpness-powers}
Let $\lambda>-1/2$, $\lambda\ne0$, $1<p<\infty$, and
$$
        \gamma_p=\max\left\{1,\frac{1}{p-1}\right\}.
$$
If an estimate of the form
$$
        \|R_\lambda f\|_{L^p(\Rp,w\,dx)} \le C_{p,\lambda}[w]_{A_{p,\lambda}}^{\theta} \|f\|_{L^p(\Rp,w\,dx)}
$$
holds for all $w\in A_{p,\lambda}$ and all compactly supported $f\in L^p(\Rp,w\,dx)$, with $C_{p,\lambda}$ independent of $w$ and $f$, then $\theta\ge\gamma_p$. If an estimate of the form
$$
        \|[b,R_\lambda]f\|_{L^p(\Rp,w\,dx)} \le C_{p,\lambda}\|b\|_{\BMO_\lambda} [w]_{A_{p,\lambda}}^{\delta} \|f\|_{L^p(\Rp,w\,dx)}
$$
holds for all $w\in A_{p,\lambda}$, all compactly supported $f\in L^p(\Rp,w\,dx)$, and all real-valued $b\in\BMO_\lambda$, with $C_{p,\lambda}$ independent of $w$, $f$, and $b$, then $\delta\ge2\gamma_p$.
\end{proposition}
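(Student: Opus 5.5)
Assume the estimate holds with exponent $\theta$ (resp. $\delta$). The plan is to transplant the one-sided model examples of Lemma~\ref{lem:one-sided-model} into a fixed compact subinterval of $\Rp$, using the conjugation of Propositions~\ref{prop:exact-weight} and~\ref{prop:norm-identities}. Then $A_\varepsilon^{\gamma_p}\lesssim A_\varepsilon^{\theta}$ with $A_\varepsilon\to\infty$, which forces $\theta\ge\gamma_p$; the same argument gives $\delta\ge2\gamma_p$. By the norm identities, an estimate for $R_\lambda$ in $L^p(w\,dx)$ with power $\theta$ of $[w]_{A_{p,\lambda}}$ is equivalent to the same estimate for $\mathcal R_\lambda$ in $L^p(U\,d\nu_\lambda)$ with power $\theta$ of $[U]_{A_p(d\nu_\lambda)}$. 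Here $U=x^{p-2\lambda-1}w$, and every weight $U\in A_p(d\nu_\lambda)$ arises this way. So it suffices to produce weights $U$, functions $F$ and symbols $b$ for $\mathcal R_\lambda$.

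Fix $x_0>0$ and the interval $J$ from Lemma~\ref{lem:AK-kernel-package}. Choose a scale $\rho>0$ so small that $J_0=[x_0-\rho,x_0+\rho]\Subset J$ and $s_J^{-1}<y/x<s_J$ for all $x,y\in J_0$. Transfer the model via the affine map $t\mapsto x_0+\rho t$ (with a reflection $t\mapsto -t$ if needed to match the sign $\varepsilon_\lambda$). Set $U_\varepsilon(x_0+\rho t)=W_\varepsilon(t)$ for $|t|<1$ and $U_\varepsilon=1$ elsewhere, and let $F(x_0+\rho t)=F_\varepsilon(t)$. By Lemma~\ref{lem:compact-localization}(ii), together with the dilation and translation invariance of Lebesgue $A_p$, we get $[U_\varepsilon]_{A_p(d\nu_\lambda)}\approx A_\varepsilon$ uniformly in $\varepsilon$. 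By part (i), the $L^p(U\,d\nu_\lambda)$ norms of functions supported in $J_0$ are comparable to Lebesgue-weighted norms.

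The supports of $F$ and of the test region $\{x_0+\rho s: 0<s<1/16\}$ lie on opposite sides of $x_0$, so no principal value is needed and $\mathcal R_\lambda F(x)=\int \mathcal K_\lambda(x,y)F(y)\,d\nu_\lambda(y)$ with $x\ne y$. By \eqref{eq:AK-local-sign-conj}, the kernel $L(s,u)=\rho\,\varepsilon_\lambda\operatorname{sgn}(x-y)\mathcal K_\lambda(x,y)y^{2\lambda+1}$ satisfies the lower bound in \eqref{eq:model-kernel-comparable}, with $x=x_0+\rho s$, $y=x_0-\rho u$ (or the mirror configuration) and $|x-y|=\rho(s+u)$. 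The size bound \eqref{eq:AK-local-conj-size} gives the upper bound. Hence $|\mathcal R_\lambda F(x)|=T^{\pm}_{\varepsilon,L}(s)$. Restricting the $L^p(U\,d\nu_\lambda)$ norm of $\mathcal R_\lambda F$ to the test region and applying Lemma~\ref{lem:one-sided-model} gives the ratio $\gtrsim A_\varepsilon^{\gamma_p}$, which proves $\theta\ge\gamma_p$.

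For the commutator, take $b(x_0+\rho t)=\log|t|$ on $|t|<1$ and $b=0$ elsewhere. This $b$ is constant off $J_0$ and lies in $\BMO(dx)$ with norm independent of $\varepsilon$, so Lemma~\ref{lem:compact-localization}(iii) gives $\|b\|_{\BMO_\lambda}\lesssim1$. Then $[b,\mathcal R_\lambda]F(x)=\int(b(x)-b(y))\mathcal K_\lambda(x,y)F(y)\,d\nu_\lambda(y)$, which is exactly $\pm C^{\pm}_{\varepsilon,L}(s)$ up to the factor from $\rho$. The stability clause of Lemma~\ref{lem:one-sided-model}, valid for $\varepsilon<\varepsilon_0$, gives the lower bound $A_\varepsilon^{2\gamma_p}$, and therefore $\delta\ge2\gamma_p$. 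The main obstacle is the bookkeeping in matching the orientation and sign: one must check that $L$ is a genuinely nonnegative kernel comparable to $(s+u)^{-1}$ on the whole region $0<s<1/16$, $0<u<1$. This is why $\rho$ is chosen small enough that the cone condition $s_J^{-1}<y/x<s_J$ holds there. The factors $y^{2\lambda+1}$ and $\rho$ only change $c_0$ and $C_0$.
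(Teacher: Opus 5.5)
Your proposal is correct and follows the paper's proof essentially step for step. You conjugate to $\mathcal R_\lambda$ on $L^p(U\,d\nu_\lambda)$ and transplant the one-sided models of Lemma~\ref{lem:one-sided-model} into a small interval where the cone condition holds. You then use Lemma~\ref{lem:compact-localization} for the characteristics, norms and BMO norm, and feed the signed kernel $L$ (comparable to $(s+u)^{-1}$ by \eqref{eq:AK-local-sign-conj} and \eqref{eq:AK-local-conj-size}) into the stability clause of that lemma. Your optional reflection to match $\varepsilon_\lambda$ is unnecessary, since the factor $\varepsilon_\lambda\operatorname{sgn}(x-y)$ is already built into $L$ and all lower bounds are taken in absolute value.
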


\begin{proof}
By the exact conjugation, the asserted estimates on the original weighted space $L^p(\Rp,w\,dx)$ are equivalent to the corresponding estimates for
$\mathcal R_\lambda$ on $L^p(\Rp,U\,d\nu_\lambda)$, with $[U]_{A_p(d\nu_\lambda)}=[w]_{A_{p,\lambda}}$.

Choose a compact interval $J\Subset\Rp$. By \eqref{eq:AK-local-sign-conj}, there are $s_J>1$,
$c_J>0$, and $\varepsilon_\lambda\in\{-1,1\}$ such that the one-sided lower bound holds whenever
$x,y\in J$ and $s_J^{-1}<y/x<s_J$. Fix $a\in J$ and choose $\eta>0$ so small that
$$
        K=(a-\eta,a+\eta)\Subset J
$$
and such that $s_J^{-1}<y/x<s_J$ for every $x,y\in K$.

Take one of the two families of functions constructed in Lemma~\ref{lem:one-sided-model}, using the first family when $1<p\le2$ and the second family when
$p>2$. Thus the endpoint $p=2$ is covered by the first family. Transfer it to $K$ by
$$
        U_\varepsilon(a+\eta t)=W_\varepsilon(t), \qquad F_\varepsilon^{K}(a+\eta t)=F_\varepsilon(t), \qquad -1<t<1,
$$
put $F_\varepsilon^K=0$ on $\Rp\setminus K$, and set $U_\varepsilon=1$ on $\Rp\setminus K$. The corresponding
Andersen--Kerman weight in the original variables is $w_\varepsilon(x)=x^{2\lambda+1-p}U_\varepsilon(x)$, so the exact
conjugation places it in $A_{p,\lambda}$ with $[w_\varepsilon]_{A_{p,\lambda}}=[U_\varepsilon]_{A_p(d\nu_\lambda)}$. For the commutator examples set $b_\varepsilon(a+\eta
t)=\log |t|$ on $K$ and extend $b_\varepsilon$ as the constant $0$ on $\Rp\setminus K$. The logarithmic singularity at $a$ is
the usual BMO singularity, and Lemma~\ref{lem:compact-localization} gives
$$
        [U_\varepsilon]_{A_p(d\nu_\lambda)} \approx [W_\varepsilon]_{A_p(-1,1)}=:A_\varepsilon,
        \qquad \|b_\varepsilon\|_{\BMO_\lambda}\approx_{\lambda,K} 1,
$$
and the $L^p$ norms in the variables $x$ and $t$ are comparable, with constants independent of $\varepsilon$.

Define the signed kernels
\begin{align}
        L_-(s,u) &=\varepsilon_\lambda\eta\, \mathcal K_\lambda(a+\eta s,a-\eta u)(a-\eta u)^{2\lambda+1}, \notag \\
        L_+(s,u) &=-\varepsilon_\lambda\eta\, \mathcal K_\lambda(a-\eta s,a+\eta u)(a+\eta u)^{2\lambda+1}. \notag
\end{align}
For $0<s<1/16$ and $0<u<1$, \eqref{eq:AK-local-sign-conj} and the local size estimate \eqref{eq:AK-local-conj-size} imply
\begin{align}\label{eq:sharpness-L-comparable}
        \frac{c}{s+u}\le L_\pm(s,u)\le\frac{C}{s+u},
\end{align}
with constants independent of $\varepsilon$.

Assume first that $1<p\le2$. For $x=a+\eta s$, $0<s<1/16$, the function $F_\varepsilon^{K}$ is supported on the other side of $a$,
namely $y=a-\eta u$, $0<u<1$. By the definition of $L_-(s,u)$, $$
        |\mathcal R_\lambda F_\varepsilon^{K}(a+\eta s)| 
        =\bigg|\int_0^1\mathcal K_\lambda(a+\eta s,a-\eta u)F_\varepsilon^{K}(a-\eta u)\cdot (a-\eta u)^{2\lambda+1}\eta\, \,du\bigg|
        \ge c\int_0^1 L_-(s,u)u^{-1+\varepsilon}\,du.
$$
If $p>2$, then $F_\varepsilon^{K}$ is supported on the right of $a$. Similarly, for $x=a-\eta s$, $0<s<1/16$, 
$$
        |\mathcal R_\lambda F_\varepsilon^{K}(a-\eta s)| \ge c\int_0^1 L_+(s,u)\,du.
$$

Consequently, for $1<p\le2$,
\begin{align*}
        \|\mathcal R_\lambda F_\varepsilon^{K}\|_{L^p(\Rp,U_\varepsilon\,d\nu_\lambda)}^p
        &\ge\int_0^\infty
        |\mathcal R_\lambda F_\varepsilon^{K}(a+\eta s)|^p
        U_\varepsilon(a+\eta s)(a+\eta s)^{2\lambda+1}\eta\,ds\\
        &\gtrsim
        \int_0^{\frac1{16}}
        |T_{\varepsilon,L_-}^-(s)|^p
        W_\varepsilon(s)(a+\eta s)^{2\lambda+1}\eta\,ds.
\end{align*}

Using \eqref{eq:sharpness-L-comparable}, Lemma~\ref{lem:one-sided-model} and compact localization further give
$$
\|\mathcal R_\lambda F_\varepsilon^{K}\|_{L^p(\Rp,U_\varepsilon\,d\nu_\lambda)}
\gtrsim
\|T_{\varepsilon,L_-}^-\|_{L^p((0,1/16),W_\varepsilon(s)\,ds)}
\gtrsim
A_\varepsilon^{\gamma_p}
\|F_\varepsilon\|_{L^p((-1,1),W_\varepsilon(t)\,dt)}
\gtrsim
A_\varepsilon^{\gamma_p}
\|F_\varepsilon^{K}\|_{L^p(\Rp,U_\varepsilon\,d\nu_\lambda)}.
$$
with constants independent of $\varepsilon$.
For $p>2$, the same computation on the left of $a$ gives the corresponding estimate with
$L_+$ and $T_{\varepsilon,L_+}^+$.

If the hypothetical bound for $R_\lambda$, equivalently for $\mathcal R_\lambda$, held with some
$\theta<\gamma_p$, then the last inequality would imply $A_\varepsilon^{\gamma_p}\le C A_\varepsilon^\theta$
as $A_\varepsilon\to\infty$, a contradiction. Hence $\theta\ge\gamma_p$.

The commutator lower bound uses the same separated supports. On these supports
$$
        b_\varepsilon(a+\eta s)-b_\varepsilon(a-\eta u)=\log s-\log u
$$
in the case $1<p\le2$, and 
$$
        b_\varepsilon(a-\eta s)-b_\varepsilon(a+\eta u)=\log s-\log u
$$
in the case $p>2$. Hence the commutator is represented by
the same kernels $L_\pm(s,u)$, with the extra factor $\log s-\log u$. 
Thus, when $1<p\le2$, for $0<s<1/16$ a similar argument gives
$$
\begin{aligned}
&\big|[b_\varepsilon,\mathcal R_\lambda]
F_\varepsilon^{K}(a+\eta s)\big|\gtrsim
\left|\int_0^1(\log s-\log u)L_-(s,u)u^{-1+\varepsilon}\,du\right|
=C_{\varepsilon,L_-}^-(s).
\end{aligned}
$$
For $p>2$, the analogous estimate holds at $a-\eta s$, with
$L_+$ and $C_{\varepsilon,L_+}^+(s)$.
Hence, for all sufficiently small $\varepsilon$, Lemma~\ref{lem:one-sided-model} and compact localization give
$$
        \|[b_\varepsilon,\mathcal R_\lambda]F_\varepsilon^{K}\|_{L^p(\Rp,U_\varepsilon\,d\nu_\lambda)} 
       \gtrsim A_\varepsilon^{2\gamma_p}
        \|F_\varepsilon\|_{L^p((-1,1),W_\varepsilon(t)\,dt)}
        \gtrsim \|b_\varepsilon\|_{\BMO_\lambda}
        A_\varepsilon^{2\gamma_p} \|F_\varepsilon^{K}\|_{L^p(\Rp,U_\varepsilon\,d\nu_\lambda)}.
$$
This gives the required commutator lower bound in both ranges.

A hypothetical commutator estimate with $\delta<2\gamma_p$ would force $A_\varepsilon^{2\gamma_p}\le C A_\varepsilon^\delta$ as
$A_\varepsilon\to\infty$, again a contradiction. Thus $\delta\ge2\gamma_p$.
\end{proof}

\bigskip
\noindent\textbf{Acknowledgements.} The author is deeply grateful to Professor Jill Pipher for many invaluable discussions and, in particular, for emphasizing the need to state explicitly the inclusion relations among the weight classes considered in Section 7. Her insight was essential in sharpening the statement and substantially improving the accuracy of the paper. The author would also like to  thank Liangchuan Wu for helpful discussions.


\medskip

\begin{thebibliography}{99}

\bibitem{AK81}
K. F. Andersen and R. A. Kerman, \emph{Weighted norm inequalities for generalized Hankel conjugate
transformations}, Studia Math. \textbf{71} (1981/82), no. 1, 15--26; doi: 10.4064/sm-71-1-15-26.

\bibitem{bcfr}
J. J. Betancor, A. Chicco Ruiz, J. C. Fari\~{n}a and L. Rodr\'iguez-Mesa, \emph{Maximal operators, Riesz transforms and Littlewood-Paley
functions associated with Bessel operators on BMO}, J. Math. Anal. Appl. \textbf{363} (2010), no. 1, 310--326.

\bibitem{bdt}
J. J. Betancor, J. Dziuba\'nski and J. L. Torrea, \emph{On Hardy spaces associated with
Bessel operators}, J. Anal. Math. \textbf{107} (2009), 195--219.

\bibitem{bfbmt}
J. J. Betancor, J. C. Fari\~{n}a, D. Buraczewski, T. Mart\'{\i}nez and J. L. Torrea, \emph{Riesz transform related to Bessel
operators}, Proc. Roy. Soc. Edinburgh Sect. A \textbf{137} (2007), no. 4, 701--725.

\bibitem{bfs}
J. J. Betancor, J. C. Fari\~{n}a and A. Sanabria, \emph{On Littlewood-Paley functions associated
with Bessel operators}, Glasg. Math. J. \textbf{51} (2009), no. 1, 55--70.

\bibitem{bhnv}
J. J. Betancor, E. Harboure, A. Nowak and B. Viviani, \emph{Mapping properties of fundamental operators in harmonic analysis
related to Bessel operators}, Studia Math. \textbf{197} (2010), no. 2, 101--140.

\bibitem{CS14}
A. J. Castro and T. Z. Szarek, \emph{Calder\'on--Zygmund operators in the Bessel setting for all possible type indices},
Acta Math. Sin. (Engl. Ser.) \textbf{30} (2014), no. 4, 637--648;
doi: 10.1007/s10114-014-2326-1.

\bibitem{CPP12}
D. Chung, M. C. Pereyra and C. P\'erez, \emph{Sharp bounds for general commutators on weighted
Lebesgue spaces}, Trans. Amer. Math. Soc. \textbf{364} (2012), no. 3, 1163--1177.

\bibitem{CRW76}
R. R. Coifman, R. Rochberg and G. Weiss, \emph{Factorization theorems for Hardy spaces in
several variables}, Ann.\ of Math. (2) \textbf{103} (1976), no. 3, 611--635.

\bibitem{CW77}
R. R. Coifman and G. Weiss, \emph{Extensions of Hardy spaces and their use in analysis}, Bull. Amer. Math. Soc.
\textbf{83} (1977), no. 4,  569--645.

\bibitem{DLWY}
X. T. Duong, J. Li, B. D. Wick and D. Yang, \emph{Factorization for Hardy spaces and characterization for BMO spaces via commutators in the
Bessel setting}, Indiana Univ. Math. J. \textbf{66} (2017), no. 4, 1081--1106.

\bibitem{DLMWY}
X. T. Duong, J. Li, S. Mao, H. Wu and D. Yang, \emph{Compactness of Riesz transform commutator associated
with Bessel operators}, J. Anal. Math. \textbf{135} (2018), no. 2, 639--673.

\bibitem{Ho}
K. P. Ho, \emph{Characterizations of BMO by $A_p$ weights and p-convexity}, Hiroshima
Math. J. {\bf41} (2011), no.2, 153--165.

\bibitem{HuntMuckenhouptWheeden1973}
R. Hunt, B. Muckenhoupt and R. Wheeden, \emph{Weighted norm inequalities for the conjugate function and
Hilbert transform}, Trans. Amer. Math. Soc. \textbf{176} (1973), 227--251.

\bibitem{k78}
R. A. Kerman, \emph{Boundedness criteria for generalized Hankel conjugate transformations}, Canad. J. Math.
\textbf{30} (1978), no. 1, 147--153.

\bibitem{Ler13}
A. K. Lerner, \emph{A simple proof of the $A_2$ conjecture}, Int. Math. Res. Not. IMRN \textbf{2013}, no. 14, 3159--3170.

\bibitem{Lorist21}
E. Lorist, \emph{On pointwise $\ell^r$-sparse domination in a space of homogeneous type}, J. Geom. Anal. \textbf{31} (2021), no. 9, 9366--9405.


\bibitem{LLLS}
J. Li, C.-W. Liang, F. Y.-H. Lin and C.-Y. Shen, \emph{Muckenhoupt-type weights in Bessel setting}, J. Geom. Anal.
\textbf{34} (2024), no. 7, Paper No. 192, 36 pp.

\bibitem{LLSW}
J. Li, C.-W. Liang, C.-Y. Shen and B. D. Wick, \emph{Muckenhoupt-type weights and quantitative weighted estimates in the Bessel setting},
Math. Z. \textbf{309} (2025), no. 1, Paper No. 13, 29 pp.

\bibitem{Li17}
K. Li, \emph{Two weight inequalities for bilinear forms},
Collect. Math. \textbf{68} (2017), no. 1, 129--144.

\bibitem{Muckenhoupt1972trans}
B. Muckenhoupt, \emph{Weighted norm inequalities for the Hardy maximal function}, Trans. Amer. Math. Soc. \textbf{165} (1972), 207--226.

\bibitem{MS65}
B. Muckenhoupt and E. M. Stein, \emph{Classical expansions and their relation to conjugate
harmonic functions}, Trans. Amer. Math. Soc. \textbf{118} (1965), 17--92.

\bibitem{Pet07}
S. Petermichl, \emph{The sharp bound for the Hilbert transform on weighted Lebesgue spaces in terms of the classical $A_p$
characteristic}, Amer. J. Math. \textbf{129} (2007), no. 5, 1355--1375.

\bibitem{v08}
M. Villani, \emph{Riesz transforms associated to Bessel operators}, Illinois J. Math. \textbf{52} (2008), no. 1, 77--89.

\end{thebibliography}
\end{document}